\newtheorem{thm}{Theorem}[section]
\newtheorem{lem}[thm]{Lemma}
\newtheorem{cor}[thm]{Corollary}
\newtheorem{prop}[thm]{Proposition}
\theoremstyle{remark}
\newtheorem{rem}[thm]{Remark}
\newtheorem*{rem*}{Remark}
\theoremstyle{definition}
\newtheorem{dfn}[thm]{Definition}
\newtheorem{ex}[thm]{Example}
\numberwithin{equation}{section}
\newcommand{\Rz}{\mathbb{R}}
\begin{document}

\title{On definable multifunctions and \L ojasiewicz inequalities}

\author{Maciej P. Denkowski \and Paulina Pe\l szy\'nska
}\address{Jagiellonian University, Faculty of Mathematics and Computer Science, Institute of Mathematics, \L ojasiewicza 6, 30-348 Krak\'ow, Poland}\email{maciej.denkowski@uj.edu.pl, paulina.pelszynska@uj.edu.pl}\date{February 8th 2014, \textit{revised:} June 7th 2014, January 30th 2017, June 30th 2017}
\keywords{Tame geometry, multifunctions, \L ojasiewicz inequality, subdifferentials}
\subjclass{32B20, 49J52, 26D10}

\begin{abstract} 
We investigate several possibilities of obtaining a \L ojasie\-wicz inequality for definable multifunctions and give some examples of applications thereof. In particular, we prove that the Hausdorff distance and its extension to closed sets is definable when composed with definable multifunctions. This allows us to obtain \L ojasiewicz-type inequalities for definable multifunctions obtained from Clarke's subgradient or the tangent cone. The paper ends with a \L ojasiewicz-type subgradient inequality in the spirit of Bolte-Daniilidis-Lewis-Shiota or Ph\d{a}m.
\end{abstract}

\maketitle

%\bigskip
\section{Introduction}
Throughout this paper \textit{definable} means \textit{definable in some o-minimal structure} 
%which we assume to be \textit{polynomially bounded} to ensure that we can use the \L ojasiewicz inequalities. 
For a concise presentation of o-minimal structures see e.g. \cite{C} and \cite{vdDM}. We just recall the basic definition:
\begin{dfn}
An \textit{o-minimal structure} (on the field $({\Rz}, +,\cdot)$) is a collection
$\mathcal{S} = \{\mathcal{S}_n\}_{n\in\mathbb{N}}$, where each $\mathcal{S}_n$ is a family of subsets of ${\Rz}^n$ satisfying
the following axioms:
\begin{enumerate}
 \item $\mathcal{S}_n$ contains all the algebraic subsets of ${\Rz}^n$;
\item $\mathcal{S}_n$ is a Boolean algebra (\footnote{Recall that a family $\mathcal{S}$ of sets, subsets of ${\Rz}^n$ in our case, is a {\it
Boolean algebra}, if $\varnothing\in\mathcal{S}$ and for every $A,B\in\mathcal{S}$, there is $A\cap B, A\cup B, {\Rz}^n\setminus
A\in\mathcal{S}$.}) of the powerset of ${\Rz}^n$; \item If $A\in\mathcal{S}_m$, $B\in\mathcal{S}_n$, then $A\times B\in\mathcal{S}_{m+n}$; \item
If $\pi\colon{\Rz}^n\times{\Rz}\to{\Rz}^n$ is the natural projection and $A\in\mathcal{S}_{n+1}$, then $\pi(A)\in\mathcal{S}_n$.
 \item $\mathcal{S}_1$ consists exactly all the finite unions of points and intervals of any type.
\end{enumerate}
The elements of $\mathcal{S}_n$ are called \textit{definable} subsets of ${\Rz}^n$.\\ A function $f\colon A\to {\Rz}^n$, where $A\subset{\Rz}^m$, is called \textit{definable} if its graph, denoted $\Gamma_f$, belongs to $\mathcal{S}_{m+n}$ (then $A\in\mathcal{S}_m$).
\end{dfn}

Let $\mathscr{P}({\Rz}^n)$ denote the family of all subsets of ${\Rz}^n$.  
\begin{dfn} A \textit{definable multifunction} $F\colon {\Rz}^m\to\mathscr{P}({\Rz}^n)$ is a definable subset $E\subset {\Rz}^m\times{\Rz}^n$ such that for each $x\in{\Rz}^m$, $F(x)$ coincides with the \textit{section} $E_x=\{y\in{\Rz}^n\mid (x,y)\in E\}$. We call this set $E$ \textit{the graph of} $F$ and denote it by $\Gamma_F$. Finally, let $\mathrm{dom} F:=\{x\in{\Rz}^m\mid F(x)\neq\varnothing\}$ be the \textit{domain} of $F$ (it is definable as the projection of $\Gamma_F$).
\end{dfn}
\begin{ex}\label{ex}
 A motivating example of such a multifunction can be found in \cite{D}. Namely, given a definable nonempty, closed set $M\subset{\Rz}^m$ we consider for $x\in{\Rz}^n$ the multifunction $m(x)=\{y\in M\mid ||x-y||=d(x,M)\}$ where $d(x,M)$ denotes the Euclidean distance of $x$ to $M$. Note that $m$ has compact values (meaning $m(x)$ are compact). Since $m$ conveys some information about the singularities of $M$, it seems interesting to investigate the possibility of obtaining some \L ojasiewicz-type inequality for this multifunction, as it could give more information about the set $M$.
\end{ex}

In recent years there has been a growing interest in definable multifunctions from optimization specialists (cf. e.g. \cite{DsP}, \cite{BDLewS1}, \cite{BDLewS2} or \cite{DIL}), but apparently there is no much response from people who work in real geometry. The intention of this paper is to present some basic results concerning definable multifunctions. As the \L ojasiewicz inequality (or rather --- inequalities, for there are several types of it) is a major tool of real geometry with important applications (cf. \cite{L2}, \cite{BDLM}), we propose to invsetigate some possible generalizations of it for definable multifunctions. This leads in a natural way to another type of considerations, namely to considering continuity properties of definable multifunctions in the sense of the Kuratowski convergence (cf. \cite{DsP}, \cite{DD}) and the study of the metric that gives this convergence. From this point of view, what can be seen as the central result of the present paper is presented in Theorem \ref{Kuratowski}. Actually, from it we obtain the best versions of \L ojasiewicz-type inequalities. This geometric discussion ends with Theorem \ref{subgradient} giving a \L ojasiewicz-type inequality for the Clarke subgradient in the spirit of \cite{BDLew}, \cite{BDLewS2} (Corollary 16) and \cite{Ph}. 

\medskip
We recall (cf. \cite{L}) that two closed sets $X,Y\subset {\Rz}^m$ are \textit{regularly separated} at $a\in X\cap Y$ iff $d(x,X)+d(x,Y)\geq Cd(x,X\cap Y)^\ell$ in a neighbourhood of $a$, for some constants $C,\ell>0$. This is equivalent to $d(x,Y)\geq Cd(x,X\cap Y)^\ell$ for $x\in X$ in a neighbourhood of $a$ (see e.g. \cite{DK} Lemma 1.1). If $f\colon X\to {\Rz}^n$ is a continuous function, then we say that it satisfies the \L ojasiewicz inequality at $a\in X$ iff $||f(x)-f(a)||\geq Cd(x,f^{-1}(f(a)))^\ell$ in a neighbourhood of $a$, for some $C,\ell>0$. These inequalities are presented in \cite{L}. The point is sets and functions  that are subanalytic or definable in \textit{polynomially bounded} o-minimal structures satisfy this kind of inequalities. Of course, it does not matter which of the usual metrics on ${\Rz}^m$ we consider. Therefore, we will assume that ${\Rz}^m\times{\Rz}^n$ is endowed with the sum of the usual Euclidean norms in ${\Rz}^m$, ${\Rz}^n$. 

By the way, note that if one of the inequalities is satisfied with an exponent $\ell$, it is still satisfied with an exponent $\ell'>\ell$. The greatest lower bound of the possible exponents is called the regular separation or the \L ojasiewicz exponent, according to the case.

 We note the following easy result which we shall refer to later on. 

\begin{prop}\label{podst}
Let $X\subset{\Rz}^m$ be a closed set, $f\colon X\to {\Rz}^n$ a continuous function, $a\in X$ and let $C,\ell>0$. Then \begin{enumerate}
\item If $d(z,X\times\{f(a)\}))\geq Cd(z,(X\times\{f(a)\})\cap \Gamma_f)^\ell$ for $z\in \Gamma_f$ in a neighbourhood of $(a,f(a))$, then $||f(x)-f(a)||\geq Cd(x,f^{-1}(f(a)))^\ell$ in a neighbourhood of $a$;

\item If $||f(x)-f(a)||\geq Cd(x,f^{-1}(f(a)))^\ell$ in a neighbourhood of $a$, then $d(z,X\times\{f(a)\}))\geq \left[\frac{C}{C+1}d(z,(X\times\{f(a)\})\cap \Gamma_f)\right]^{\max\{\ell,1\}}$ for $z\in \Gamma_f$ in a neighbourhood of $(a,f(a))$.
\end{enumerate}
\end{prop}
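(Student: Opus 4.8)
The plan is to reduce both implications to an elementary one-variable inequality by first computing the two distances appearing in the statement explicitly, exploiting the product structure of $\Gamma_f \subset \Rz^m \times \Rz^n$ together with the fact that the metric is the sum of the Euclidean norms. Write $z = (x, f(x)) \in \Gamma_f$, put $A := ||f(x) - f(a)||$ and $B := d(x, f^{-1}(f(a)))$, and note that $(X \times \{f(a)\}) \cap \Gamma_f = f^{-1}(f(a)) \times \{f(a)\}$. First I would observe that, since $x \in X$, the infimum of $||x-x'||$ over $x' \in X$ is $0$, so that $d(z, X \times \{f(a)\}) = A$; and similarly, since the second coordinates of $z$ and of any point of the intersection differ by the constant $A$ while the first coordinate ranges over $f^{-1}(f(a))$, we get $d(z, (X \times \{f(a)\}) \cap \Gamma_f) = A + B$. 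These two identities are the heart of the matter and reduce everything else to bookkeeping.

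With them, the hypothesis and conclusion of (1) read $A \geq C(A+B)^\ell$ and $A \geq CB^\ell$ respectively, and since $A + B \geq B \geq 0$ and $t \mapsto t^\ell$ is increasing on $[0,\infty)$, the former trivially implies the latter. For (2) the hypothesis becomes $A \geq CB^\ell$, and the desired conclusion, after taking $\max\{\ell,1\}$-th roots, is $A^{1/\max\{\ell,1\}} \geq \frac{C}{C+1}(A+B)$. Here I would split into two cases. When $\ell \leq 1$ one has $\max\{\ell,1\} = 1$; shrinking the neighbourhood so that $B \leq 1$ gives $B^\ell \geq B$, whence $A \geq CB$, i.e. $B \leq A/C$ and $A + B \leq \frac{C+1}{C}A$, which is precisely the claim. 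When $\ell > 1$ one has $\max\{\ell,1\} = \ell$ and $B \leq C^{-1/\ell}A^{1/\ell}$, so it suffices to show $\frac{C}{C+1}\bigl(A + C^{-1/\ell}A^{1/\ell}\bigr) \leq A^{1/\ell}$.

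The main (and essentially only) obstacle is this last inequality, where the exponent genuinely jumps to $\max\{\ell,1\}$. I expect to divide through by $A^{1/\ell}$ and rewrite it as $\frac{C}{C+1}A^{(\ell-1)/\ell} \leq 1 - \frac{C}{C+1}C^{-1/\ell}$. The right-hand side is a strictly positive constant: indeed $\frac{C}{C+1}C^{-1/\ell} = \frac{C^{(\ell-1)/\ell}}{C+1}$ and, because $0 < (\ell-1)/\ell < 1$, we have $C^{(\ell-1)/\ell} \leq \max\{1,C\} < C+1$. Since the left-hand side tends to $0$ as $A \to 0$, and $A = ||f(x)-f(a)|| \to 0$ as $x \to a$ by continuity of $f$, the inequality holds after shrinking the neighbourhood once more. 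Finally I would note that the map $x \mapsto (x,f(x))$ is a homeomorphism of $X$ onto $\Gamma_f$, so it identifies neighbourhoods of $a$ in $X$ with neighbourhoods of $(a,f(a))$ in $\Gamma_f$; hence ``in a neighbourhood of $a$'' and ``in a neighbourhood of $(a,f(a))$'' are interchangeable and all the shrinking is legitimate.
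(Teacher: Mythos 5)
Your proposal is correct and follows essentially the same route as the paper: both proofs rest on the two computations $d(z,X\times\{f(a)\})=||f(x)-f(a)||$ and $d(z,(X\times\{f(a)\})\cap\Gamma_f)=||f(x)-f(a)||+d(x,f^{-1}(f(a)))$ for $z=(x,f(x))$, after which everything reduces to elementary one-variable estimates. The only cosmetic difference is in the end-game of (2), where the paper treats both cases at once by assuming $||f(x)-f(a)||<1$ and bounding $\tfrac{1}{C}A^{1/\ell}+A$ by $\left(\tfrac{1}{C}+1\right)A^{\min\{1,1/\ell\}}$, while you split into $\ell\le 1$ and $\ell>1$; the two arguments are equivalent in substance.
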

\begin{proof}
 We compute for $z=(x,f(x))$, $$d(z,X\times\{f(a)\}))=\inf\{||x-x'||+||f(x)-f(a)||\colon x'\in X\}=||f(x)-f(a)||.$$ On the other hand, 
\begin{align*}
d(z,(X\times\{f(a)\})\cap \Gamma_f)&=d(z,f^{-1}(f(a))\times\{f(a)\})=\\
&=\inf\{||x-x'||+||f(x)-f(a)||\colon x'\in f^{-1}(f(a))\}\geq\\
&\geq d(x,f^{-1}(f(a))).
\end{align*}

From this we obtain (1). In order to prove (2), we write for $x$ in a neighbourhood of $a$ such that $||f(x)-f(a)||<1$,
\begin{align*}
d(z,(X\times\{f(a)\})\cap \Gamma_f)&=d(x,f^{-1}(f(a)))+||f(x)-f(a)||\leq\\
&\leq \frac{1}{C}||f(x)-f(a)||^{\frac{1}{\ell}}+||f(x)-f(a)||\leq \\
&\leq \left(\frac{1}{C}+1\right)||f(x)-f(a)||^{\min\{1,1/\ell\}}
\end{align*}
and we are done.\end{proof}
\begin{rem}\label{obs}
The regular separation exponent $\ell$ is always at least 1, provided $a\notin \mathrm{int}(X\cap Y)$, which follows from the inequality $d(x,X)+d(x,Y)\leq 2d(x,X\cap Y)$. On the other hand, the \L ojasiewicz exponent of a function may be smaller than 1.
\end{rem}
\begin{ex}
The function $\sqrt{|x|}$, $x\in{\Rz}$ satisfies the \L ojasiewicz inequality at zero with best exponent $1/2$, while the best exponent for the separation of the graph is $1$.
\end{ex}

Both the regular separation property and the \L ojasiewicz inequality follow from another type of \L ojasiewicz inequality given for two real-valued continuous functions $f,g$ with $f^{-1}(0)\subset g^{-1}(0)$ as $|f(x)|\geq C|g(x)|^q$. 

If the structure is not polynomially bounded, we have the following counterpart of the \L ojasiewicz inequality:
\begin{thm}[\cite{vdDM}]\label{vdDM}
 If $f,g\colon A\to{\Rz}$ are continuous definable functions with $f^{-1}(0)\subset g^{-1}(0)$ and $A\subset{\Rz}^n$ is compact, then there exists an odd $\mathscr{C}^p$ definable, strictly increasing bijection $\phi\colon {\Rz}\to{\Rz}$ which is $p$-flat at zero(\footnote{i.e. $\phi^{(k)}(0)=0, k=0,\dots, p$.}), such that $|\phi(g(t))|\leq |f(t)|$ on $A$.
\end{thm}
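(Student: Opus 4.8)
The plan is to collapse the two functions into a single one-variable comparison function and then minorize it by an explicit smooth curve. First I would set $M:=\max_A|g|$ (finite, since $A$ is compact and $g$ continuous) and define, for $s\in[0,M]$,
\[
\theta(s):=\inf\{|f(t)|\mid t\in A,\ |g(t)|\ge s\}.
\]
This $\theta$ is definable (an infimum of a definable family), non-decreasing, and satisfies $|f(t)|\ge\theta(|g(t)|)$ for every $t\in A$ (take $t$ itself as a competitor). The containment $f^{-1}(0)\subset g^{-1}(0)$ enters exactly here: if $\theta(s)=0$ for some $s>0$, then by compactness the infimum is attained at some $t$ with $|g(t)|\ge s>0$ and $f(t)=0$, contradicting the hypothesis; hence $\theta(s)>0$ for $s>0$. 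Consequently it suffices to produce an odd, $\mathscr{C}^p$, strictly increasing definable bijection $\phi\colon\Rz\to\Rz$, $p$-flat at $0$, with $\phi(s)\le\theta(s)$ for $s\in(0,M]$: then for $t$ with $g(t)\neq0$ we get $|\phi(g(t))|=\phi(|g(t)|)\le\theta(|g(t)|)\le|f(t)|$, while $|\phi(g(t))|=\phi(0)=0\le|f(t)|$ when $g(t)=0$.

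Next I would build $\phi$ near $0$. Let $c_0:=\lim_{s\to0^+}\theta(s)=\inf_{s>0}\theta(s)\ge0$. If $c_0>0$ (in particular whenever $f^{-1}(0)=\varnothing$), then $\theta\ge c_0$ on $(0,M]$ and one simply scales the model curve $s\mapsto\mathrm{sgn}(s)|s|^{p+1}$ so that its value at $M$ is $\le c_0$. The genuine case is $c_0=0$. By the monotonicity theorem choose $0<\delta\le M$ with $\theta$ of class $\mathscr{C}^p$ and strictly increasing on $(0,\delta]$ and $\theta(0^+)=0$. Comparing $\theta$ with the powers $s\mapsto s^N$, $N\in\Nt$ --- each difference $\theta(s)-s^N$ has constant sign near $0$ by o-minimality --- yields a dichotomy: either $\theta(s)\ge s^N$ near $0$ for some $N$, or $\theta(s)\le s^N$ near $0$ for every $N$. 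In the first (``sharp'') case $\phi_0(s):=\tfrac12 s^{\max\{N,p+1\}}$ is $\mathscr{C}^p$, $p$-flat, strictly increasing and $<\theta$ on a one-sided neighbourhood of $0$.

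The delicate point --- and the step I expect to be the main obstacle --- is the second (``flat'') case, where I must show that $\theta$ is already $\mathscr{C}^p$ and $p$-flat at $0$, so that $\phi_0:=\tfrac12\theta$ works. Here I would argue by induction on the order of differentiation using an o-minimal L'Hôpital/Hardy-field principle: each derivative $\theta^{(k)}$ ($k\le p$) is definable and hence has a limit at $0^+$, and if $\theta^{(k)}$ failed to be flat, i.e. $\theta^{(k)}(s)\ge s^M$ near $0$ for some $M$, then integrating $k$ times would force $\theta(s)\ge c\,s^{M+k}$, contradicting flatness of $\theta$; thus every $\theta^{(k)}$ tends to $0$ and $\theta$ extends $\mathscr{C}^p$-smoothly to $0$ with $\theta^{(k)}(0)=0$. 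Keeping this comparison under control --- staying below $\theta$ while simultaneously guaranteeing $\mathscr{C}^p$-regularity, $p$-flatness and strict monotonicity --- is the technical heart of the argument.

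Finally I would globalise: on $[\delta,M]$ extend $\phi_0$ by a definable $\mathscr{C}^p$ strictly increasing function staying below $\theta(\delta)\le\theta(s)$, beyond $M$ let it grow to $+\infty$ (e.g.\ back to a power) so as to become a bijection of $[0,\infty)$, gluing in class $\mathscr{C}^p$, and then set $\phi(s):=-\phi(-s)$ for $s<0$. The odd extension is automatically $\mathscr{C}^p$ across $0$ precisely because $\phi$ is $p$-flat there, and the resulting map is an odd, strictly increasing $\mathscr{C}^p$ definable bijection of $\Rz$ with $\phi\le\theta$ on $(0,M]$, as required.
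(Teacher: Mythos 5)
The paper offers no proof of this statement --- it is quoted directly from van den Dries--Miller --- so there is no internal argument to compare against; judged on its own merits, your proposal is essentially the standard proof from that source and is correct in outline. The reduction to a one-variable minorization problem via $\theta(s):=\inf\{|f(t)| \mid t\in A,\ |g(t)|\ge s\}$ is exactly right (compactness makes the infimum attained, so the hypothesis $f^{-1}(0)\subset g^{-1}(0)$ forces $\theta(s)>0$ for $s>0$, and $t$ itself witnesses $|f(t)|\ge\theta(|g(t)|)$), and the dichotomy, the odd reflection, and the gluing beyond $\delta$ are routine. The one step that needs repair is the induction in the ``flat'' case: for a definable $\theta^{(k)}$ the negation of $\theta^{(k)}(s)\to 0$ is not ``$\theta^{(k)}(s)\ge s^M$ near $0$'' but rather that the limit $L_k\in[-\infty,+\infty]$ (which exists by the monotonicity theorem) is nonzero, and $L_k$ could a priori be negative; moreover the integration must run upward from $0^+$, which uses the inductive hypothesis $\theta^{(k-1)}(0^+)=0$, rather than ``integrating $k$ times'' from a fixed endpoint. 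Both points are easily fixed --- if $L_k>0$ then $\theta^{(k-1)}(s)\ge cs$ and iterating yields $\theta(s)\ge cs^{k}/k!$, contradicting flatness, while if $L_k<0$ the same iteration yields $\theta<0$ near $0$, contradicting $\theta>0$ --- so I would count this as an imprecision in the step you yourself identify as the technical heart, not a gap in the strategy.
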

We will call such a function $\phi$ a \textit{flattening function}. This plays the role of the general \L ojasiewicz inequality which is stated with $\phi(t)=C\mathrm{sgn}(t)|t|^\ell$. 

\section{General \L ojasiewicz inequalities}

Although usually the \L ojasiewicz inequality is stated for continuous functions (and continuity is used in the standard proofs), a slightly more general result is true, as we will see below. The idea is to use the same kind of argument as in Proposition \ref{podst}.

Let $f\colon \Omega\to{\Rz}^n$ be a subanalytic function on a set $\Omega\subset{\Rz}^m$, \textsl{not necessarily continuous}. 

We want to show that such a function also satisfies a \L ojasiewicz inequality at a zero, i.e. its growth near a zero is estimated from below by the distance to the zero set. 

Let us first agree on what should be considered a zero of $f$. In view of the example of $f(x)=|x|+\chi_{\{0\}}$ (\footnote{Where $\chi_{\{0\}}$ is the characteristic function of $\{0\}\subset{\Rz}$, i.e. $\chi_{\{0\}}(0)=1$ and $\chi_{\{0\}}(x)=0$, for $x\neq 0$.}) it is natural to  consider the generalized set of zeroes as the set --- which we may call \textit{general set of zeroes} ---
$$
f^{-1}(0)^g:=\{x\in \Omega \mid (x,0)\in\overline{\Gamma_f}\}.
$$
If $\Omega$ is subanalytic, or an open set, then $f^{-1}(0)^g$ is subanalytic (either in ${\Rz}^m$, or in $\Omega$).

\begin{thm}
If $f$ is as above, then for any point $a$ in the general set of zeroes of $f$ there is a neighbourhood $U$ and positive constants $C,\ell$ such that
$$
||f(x)||\geq C d(x, f^{-1}(0)^g)^\ell,\quad x\in U\cap\Omega.
$$
\end{thm}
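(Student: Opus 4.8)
The plan is to mimic Proposition \ref{podst}, but with the graph of a continuous function replaced by the closed subanalytic set $\overline{\Gamma_f}$, and to read off the inequality from the \L ojasiewicz regular separation of this graph from the coordinate subspace $\Rz^m\times\{0\}$. Since $a\in f^{-1}(0)^g$ means exactly that $(a,0)\in\overline{\Gamma_f}$, the point $(a,0)$ belongs to $A\cap B$, where I set $A:=\overline{\Gamma_f}$ and $B:=\Rz^m\times\{0\}$. Both are closed and subanalytic — $A$ because the closure of a subanalytic set is subanalytic (here one uses that $\om$ is subanalytic or open), and $B$ because it is algebraic — so, subanalytic sets being definable in a polynomially bounded structure, the regular separation property applies and gives constants $C,\ell>0$ and a ball $W=B((a,0),r)$ with, in the one-sided form,
$$
d(z,B)\ge C\,d(z,A\cap B)^{\ell},\qquad z\in A\cap W.
$$

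Next I would compute the two distances along the graph. For $z=(x,f(x))\in\Gamma_f\subset A$, the sum metric gives $d(z,B)=\inf_{x'}\{\|x-x'\|+\|f(x)\|\}=\|f(x)\|$. Writing $Z:=\pi(A\cap B)=\{x'\mid (x',0)\in\overline{\Gamma_f}\}$ for the projection of the intersection, the analogous computation yields $d(z,A\cap B)=\|f(x)\|+d(x,Z)\ge d(x,Z)$. Substituting into the displayed inequality produces $\|f(x)\|\ge C\,d(x,Z)^{\ell}$ for every $x\in\om$ with $(x,f(x))\in W$. It then remains to pass from $Z$ to $f^{-1}(0)^g=Z\cap\om$: every $x'\in Z$ is a limit of points of $\om$, so $Z\subset\overline{\om}$, whence for $\om$ open or closed $Z$ agrees with $f^{-1}(0)^g$ near $a$; and since $a\in Z$, for $x$ close to $a$ the distance $d(x,Z)$ is realised near $a$, so $d(x,Z)=d(x,f^{-1}(0)^g)$ there.

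The one genuine difficulty, and the point where this departs from the continuous case, is that $f$ being discontinuous one cannot force $(x,f(x))\in W$ simply by taking $x$ near $a$: the value $f(x)$ may jump away from $0$. I would resolve this by a dichotomy. If $(x,f(x))\in W$, the estimate above already gives the conclusion. If $(x,f(x))\notin W$, then, $W$ being the ball of radius $r$ for the sum metric, $\|x-a\|<r/2$ forces $\|f(x)\|\ge r/2$; as $d(x,f^{-1}(0)^g)\to0$ when $x\to a$, one shrinks the neighbourhood $U\ni a$ so that $C\,d(x,f^{-1}(0)^g)^{\ell}<r/2\le\|f(x)\|$ throughout this second case. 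Taking $U$ to be the intersection of the two neighbourhoods and keeping $C,\ell$ from the separation step, the inequality holds on $U\cap\om$. Thus the main obstacle is not the inequality, which is a direct transcription of regular separation, but the bookkeeping forced by discontinuity — transferring a neighbourhood of $(a,0)$ in the graph to a neighbourhood of $a$ in $\om$ — and it is exactly the case split that makes this transfer legitimate.
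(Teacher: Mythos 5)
Your proposal follows essentially the same route as the paper's proof: regular separation of $\overline{\Gamma_f}$ from the horizontal slice, the same distance computations along the graph in the sum norm, and the same final dichotomy (the point $(x,f(x))$ lands inside the separation ball, or else $||f(x)||$ is bounded below by a fixed constant) to handle the possible discontinuity of $f$. The only cosmetic differences are that the paper separates from $\Omega\times\{0\}$ rather than from ${\Rz}^m\times\{0\}$, so the intersection projects exactly onto $f^{-1}(0)^g$ and your reconciliation of $Z$ with $Z\cap\Omega$ (which the paper's accompanying remark addresses by assuming regularity of $\Omega$ or redefining the general zero set) is not needed, and that the paper closes the second case by normalising $C<1$, $\ell\geq 1$, $\varepsilon<1$ instead of shrinking $U$; both points are minor.
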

\begin{rem}
It is natural to assume some kind of regularity of $\Omega$, or, instead, when using the regular separation of $\overline{\Gamma_f}$ with ${\Rz}^m\times\{0\}$, to extend the definition of the general set of zeros to $f^{-1}(0)^g:=\{x\in{\Rz}^n\mid (x,0)\in \overline{\Gamma_f}\}$. Either way, the following proof works.
\end{rem}
\begin{proof}[Proof of the Theorem] 
In ${\Rz}^m\times{\Rz}^n$ we consider the product norm $||(x,y)||=||x||+||y||$ and in either of the spaces we may use the $\ell_1$ norm, too.

Let $(a,0)\in f^{-1}(0)^g$. Then there is a ball $B((a,0),\varepsilon)$ and constants $C,\ell>0$ such that 
$$
d((x,y),\overline{\Gamma_f})+d((x,y),\Omega\times\{0\})\geq C d((x,y),\overline{\Gamma_f}\cap (\Omega\times\{0\})^\ell,\> (x,y)\in B((a,0),\varepsilon).
$$
Therefore, if we take $(x,y)\in\overline{\Gamma_f}\cap B((a,0),\varepsilon)$ we have on the one hand, 
$$
d((x,y),\Omega\times\{0\})=\inf\{||x-x'||+||y||\colon x'\in \Omega\}$$
i.e. $d((x,y),\Omega\times\{0\})=||y||$, while on the other,
\begin{align*}
d((x,y),\overline{\Gamma_f}\cap (\Omega\times\{0\}))&=\inf\{||x-x'||+||y||\colon x'\in f^{-1}(0)^g\}\geq \\
&\geq \inf\{||x-x'||\colon x'\in f^{-1}(0)^g\}=\\
&=d(x,f^{-1}(0)^g).
\end{align*}
Thus, $||y||\geq C d(x,f^{-1}(0)^g)^\ell$ whenever $(x,y)\in\overline{\Gamma_f}\cap B((a,0),\varepsilon)$. 

Observe that we have $\ell\geq 1$ (Remark \ref{obs}) and we may always assume that $C\in (0,1)$. Moreover, let us assume that $\varepsilon<1$.  

Now, for any $x\in B_m(a,\varepsilon)\cap\Omega $ we have either $f(x)\in B_n(0,\varepsilon)$ and so 
$$
||f(x)||\geq C d(x,f^{-1}(0)^g)^\ell,
$$
or $||f(x)||\geq\varepsilon$. In the latter case, we obtain
\begin{align*}
||f(x)||&\geq\varepsilon>||x-a||\geq\\
&\geq d(x,f^{-1}(0)^g)\geq\\
&\geq Cd(x,f^{-1}(0)^g)^\ell,
\end{align*}
by the choice of $C$ and $\ell$. 
\end{proof}

In an o-minimal structure, if $X,Y\subset{\Rz}^n$ are closed, definable and nonempty, then by applying Theorem \ref{vdDM} to the definable, continuous functions $f(x)=d(x,Y)$ and $g(x)=d(x,X\cap Y)$, we obtain the inequality
$$
d(x,Y)\geq \phi(d(x,X\cap Y))\leqno{(*)}
$$
in a neighbourhood of $a\in X\cap Y$ in $X$ with some flattening bijection $\phi$. 

Therefore, we may more or less repeat the last proof in order to obtain the following:
\begin{thm}
If $f\colon \Omega\to{\Rz}^m$ is a definable function, then for any point $a$ in the general set of zeroes of $f$ there is a neighbourhood $U$ and a continuous, odd, bijection of class $\mathscr{C}^p$ $\phi\colon {\Rz}\to{\Rz}$ that is $p$-flat at zero and such that
$$
||f(x)||\geq  \phi(d(x, f^{-1}(0)^g)),\quad x\in U\cap\Omega.
$$
\end{thm}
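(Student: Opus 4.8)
The plan is to mimic the proof of the preceding theorem, replacing the regular separation inequality by the flattening inequality $(*)$. First I would set $X=\overline{\Gamma_f}$ and $Y=\overline{\om}\times\{0\}$, both closed and definable (the domain $\om=\pi(\Gamma_f)$ is definable, hence so is $\overline{\om}$; passing to $\overline{\om}$, or the alternative convention of the Remark, is what guarantees closedness of the second set). Since $a\in f^{-1}(0)^g$ means $(a,0)\in\overline{\Gamma_f}$ and $a\in\overline{\om}$, the point $(a,0)$ lies in $X\cap Y$. Applying $(*)$ — that is, Theorem \ref{vdDM} to the continuous definable functions $z\mapsto d(z,Y)$ and $z\mapsto d(z,X\cap Y)$ on a compact neighbourhood of $(a,0)$ in $X$, on which both have the same zero set $X\cap Y$ — produces a flattening function $\phi_0$ and a ball $B((a,0),\varepsilon)$, with $\varepsilon<1$, on which $d(z,Y)\geq\phi_0(d(z,X\cap Y))$ for $z\in X$.

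Next I would extract the pointwise estimate exactly as in the previous proof. For $z=(x,y)\in\overline{\Gamma_f}\cap B((a,0),\varepsilon)$ one has $d(z,Y)=||y||$ (because $x\in\overline{\om}$), while $d(z,X\cap Y)=\inf\{||x-x'||+||y||\colon x'\in f^{-1}(0)^g\}\geq d(x,f^{-1}(0)^g)$ by dropping the nonnegative $||y||$ term. Since $\phi_0$ is increasing, this yields $||y||\geq\phi_0(d(x,f^{-1}(0)^g))$ for all such $z$. In particular, taking $z=(x,f(x))$ with $x\in B_m(a,\varepsilon/2)\cap\om$ and $||f(x)||<\varepsilon/2$ gives $(x,f(x))\in B((a,0),\varepsilon)$ and hence $||f(x)||\geq\phi_0(d(x,f^{-1}(0)^g))$ directly.

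The only genuinely new point — and the step I expect to be the main obstacle — is the dichotomy argument at the end. In the polynomially bounded case the ``far'' alternative $||f(x)||\geq\varepsilon$ was absorbed using the freedom to take $C\in(0,1)$ and $\ell\geq1$; here I must instead rescale the flattening function. Concretely, for $x\in B_m(a,\varepsilon/2)\cap\om$ with $||f(x)||\geq\varepsilon/2$ I would use $d(x,f^{-1}(0)^g)\leq||x-a||<\varepsilon/2$ together with monotonicity of $\phi_0$ to get $\phi_0(d(x,f^{-1}(0)^g))\leq\phi_0(\varepsilon/2)$. Setting $\phi:=\delta\,\phi_0$ with $\delta:=\min\{1,(\varepsilon/2)/\phi_0(\varepsilon/2)\}$ then forces $\phi(d(x,f^{-1}(0)^g))\leq\varepsilon/2\leq||f(x)||$ in this case, while $\delta\leq1$ ensures $\phi\leq\phi_0$ on $[0,\infty)$, so the ``near'' estimate of the previous paragraph survives. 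Finally I would check that $\phi$ inherits all the required properties from $\phi_0$: multiplication by a positive constant preserves oddness, strict monotonicity, the bijectivity of ${\Rz}\to{\Rz}$, the $\mathscr{C}^p$ regularity, and the $p$-flatness at zero. Combining the two cases gives $||f(x)||\geq\phi(d(x,f^{-1}(0)^g))$ on $U:=B_m(a,\varepsilon/2)$, as required.
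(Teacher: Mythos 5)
Your proposal is correct and follows essentially the same route as the paper: apply Theorem \ref{vdDM} to the two distance functions to separate $\overline{\Gamma_f}$ from the zero slice, read off $||y||\geq \phi_0(d(x,f^{-1}(0)^g))$ for $(x,y)$ on the graph near $(a,0)$, and dispose of the far alternative $||f(x)||\geq\varepsilon$ separately (with the same implicit reliance on the Remark's convention for $f^{-1}(0)^g$). The only difference is cosmetic: where you rescale the flattening function to $\delta\phi_0$ so that its values stay below $\varepsilon/2$, the paper instead invokes the $p$-flatness to get $\phi(t)\leq t$ for small $t$ and concludes from $d(x,f^{-1}(0)^g)\leq ||x-a||<\varepsilon$ without altering $\phi$.
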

\begin{proof}
Using $(*)$ we obtain for $(x,y)\in\overline{\Gamma_f}\cap B((a,0),\varepsilon)$,
$$
||y||\geq  \phi(d((x,y), \overline{\Gamma_f}\cap(\Omega\times\{0\})).
$$

Of course, the distances are computed as in the previous proof. The monotonicity of $\phi$ allows us to adapt this proof to the present case. A slight modification is needed at the end, as follows.

We know that $\phi$ is $p$-flat at zero and so %for any $\eta>0$, if $t$ is sufficiently small, we have $|\phi(t)|\leq\eta|t|^p$. In particular, if $\eta<1$ and $t\in (0,1)$ 
in particular, for $t>0$ sufficiently close to zero, we necessarily have $\phi(t)\leq t$. Assume that $\varepsilon$ is chosen so that this inequality holds.

Then, if for some $x\in B_n(a,\varepsilon)$ we have $||f(x)||\geq \varepsilon$,  this leads to
$$
||f(x)||\geq\varepsilon>||x-a||\geq d(x,f^{-1}(0)^g)\geq \phi(d(x,f^{-1}(0)^g))
$$
and we are done.
\end{proof}

All this allows us to disregard the problem of continuity in what will follow. 

\section{Preliminaries}
In order to obtain a \L ojasiewicz-type inequality for a definable multifunction $F$ in a polynomially bounded o-minimal structure one method consists in using the regular separation of the graph $\Gamma_F$ with the domain as in Proposition \ref{podst}. However, first we should define a notion of pre-image of $F$. There are several natural possibilites.% (\footnote{Two of them can be found in the book \cite{AF}: our weak pre-image is called there the {\it inverse image}, and our lower pre-image is called the {\it core} of $F(a)$ by $F$.}). 

Let $a\in\mathrm{dom} F$, we consider \begin{itemize}
\item $F^{-1}(F(a))=\{x\in{\Rz}^m\mid F(x)=F(a)\}$ \textit{the (strong) pre-image};
\item $F^*(F(a))=\{x\in \mathrm{dom} F\mid F(x)\subset F(a)\}$ \textit{the lower pre-image};
\item $F_*(F(a))=\{x\in{\Rz}^m\mid F(x)\supset F(a)\}$ \textit{the upper pre-image};
\item $F^\#(F(a))=\{x\in{\Rz}^m\mid F(x)\cap F(a)\neq\varnothing\}$ \textit{the weak pre-image}. 
\end{itemize}

Finally, we may consider a \textit{point pre-image} defined for a point $y\in F(a)$ as the section $(\Gamma_F)_y:=\{x\in {\Rz}^m\mid y\in F(x)\}$. Obviously, $$F^\#(F(a))=\bigcup_{y\in F(a)}(\Gamma_F)_y.$$

\begin{prop}
All the sets defined above are definable.
\end{prop}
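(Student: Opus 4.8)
The plan is to reduce everything to the closure properties of the o-minimal structure $\mathcal{S}$: by axioms (2)--(4) the definable sets are closed under Boolean operations, products and projections, hence under arbitrary first-order operations, since existential quantification is realized by the projection $\pi$ of axiom (4) and universal quantification by the De Morgan rewriting $\forall = \neg\,\exists\,\neg$. So my strategy is to write each pre-image as a first-order formula whose only atoms are membership in $\Gamma_F$ and in the fixed set $F(a)$, and then translate the quantifiers into projections and complements, applying Boolean operations and products along the way.

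First I would record the base facts. The value $F(a)$ is itself a definable subset of $\Rz^n$: indeed $\{a\}\times F(a)=\Gamma_F\cap(\{a\}\times\Rz^n)$, where $\{a\}$ is an algebraic, hence definable, singleton, so this intersection is definable by (1)--(3), and $F(a)$ is its projection to $\Rz^n$ by (4). The same argument with the slice $\Rz^m\times\{y\}$ in place of $\{a\}\times\Rz^n$ shows that each point pre-image $(\Gamma_F)_y$ is definable, being the projection to $\Rz^m$ of $\Gamma_F\cap(\Rz^m\times\{y\})$; and $\mathrm{dom}F$ is definable as already observed. With $F(a)$ in hand, the weak pre-image is the easiest case: $F^\#(F(a))=\{x\mid\exists y:(x,y)\in\Gamma_F\ \text{and}\ y\in F(a)\}$ is the projection to $\Rz^m$ of the definable set $\Gamma_F\cap(\Rz^m\times F(a))$, which is also the announced union $\bigcup_{y\in F(a)}(\Gamma_F)_y$.

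The remaining three pre-images require a universal quantifier, and this is the only real point of the proof. For the upper pre-image, $F(x)\supset F(a)$ means $\forall y\,(y\in F(a)\Rightarrow (x,y)\in\Gamma_F)$; negating, $x\notin F_*(F(a))$ holds iff $\exists y\,(y\in F(a)\wedge(x,y)\notin\Gamma_F)$. The set $(\Rz^m\times F(a))\cap\bigl((\Rz^m\times\Rz^n)\setminus\Gamma_F\bigr)$ is definable, its projection to $\Rz^m$ is definable by (4), and $F_*(F(a))$ is the complement of that projection, definable by (2). The lower pre-image is handled identically from the formula $\forall y\,((x,y)\in\Gamma_F\Rightarrow y\in F(a))$, intersecting the result with the definable set $\mathrm{dom}F$. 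Finally, since $a\in\mathrm{dom}F$ forces $F(a)\neq\varnothing$, the equality $F(x)=F(a)$ is equivalent to $F(x)\subset F(a)$ together with $F(x)\supset F(a)$, so $F^{-1}(F(a))=F^*(F(a))\cap F_*(F(a))$ is definable as an intersection. The one thing to watch is precisely that universal quantification is not among the stated axioms: it must be produced by combining the projection axiom with the Boolean-algebra axiom, which is exactly what the $\neg\,\exists\,\neg$ rewriting does. Since there are no estimates and no o-minimality-specific input beyond the first-order closure of $\mathcal{S}$, this bookkeeping is the whole of the difficulty.
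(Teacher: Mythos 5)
Your proof is correct and follows essentially the same route as the paper's: express each pre-image by a first-order formula and realize the quantifiers via projections, complements and products, with the strong pre-image obtained as $F^*(F(a))\cap F_*(F(a))$. The only cosmetic difference is that the paper handles the upper pre-image by passing to the complement multifunction $G(x)=\Rz^n\setminus F(x)$ and using $F_*(F(a))=G^*(G(a))$, whereas you apply the $\neg\,\exists\,\neg$ rewriting directly; these are the same De Morgan argument in different packaging.
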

\begin{proof}
Let $\pi(x,y)=x$ for $(x,y)\in{\Rz}^m\times {\Rz}^n$. It is clear that  
the point pre-image is definable, as $(\Gamma_F)_y=\pi(\Gamma_F\cap ({\Rz}^m\times\{y\}))$.

On the other hand, \begin{align*}F^\#(F(a))&=\{x\in{\Rz}^m\mid \exists y\in F(a)\colon y\in F(x)\}=\\
&=\pi(\{(x,y)\in{\Rz}^m\times F(a)\mid (x,y)\in\Gamma_F\})=\\
&=\pi(({\Rz}^m\times F(a))\cap\Gamma_F)\end{align*}
whence the definability of the weak pre-image.

Next, note that $F^{-1}(F(a))=F^*(F(a))\cap F_*(F(a))$, thus we concentrate on the lower an upper pre-images. Now, 
\begin{align*}
\mathrm{dom} F\setminus F^*(F(a))&=\{x\in\mathrm{dom} F\mid \exists y
\in F(x)\setminus F(a)\}=\\
&=\pi(\{(x,y)\in\mathrm{dom} F\times {\Rz}^n\mid y\in F(x)\setminus F(a)\})=\\
&=\pi(\Gamma_F\setminus ({\Rz}^m\times F(a))),
\end{align*}
where $\pi(x,y)=x$, which accounts for the definability of the lower pre-image.

Finally, define $G(x)={\Rz}^n\setminus F(x)$. Then $\Gamma_G={\Rz}^{m+n}\setminus\Gamma_F$ and so $G$ is a definable multifunction. It remains to observe that $F_*(F(a))=G^*(G(a))$ and we are done.
\end{proof}
\begin{rem}
Observe that all the notions introduced above can be of some interest, e.g. in the case of Example \ref{ex}.
\end{rem}
One should be careful in the subanalytic case, since all subanalytic sets do not form an o-minimal structure.
\begin{ex}
The set $$\bigcup_{\nu=1}^{+\infty} \{1/\nu\}\times [\nu,+\infty)$$ is a subanalytic subset of ${\Rz}^2$ and so may be seen as the graph of a subanalytic multifunction $F\colon {\Rz}\to\mathscr{P}(\Rz)$. However, $$F^*(F(1))=\{x\in\mathrm{dom} F\mid F(x)\subset [1,+\infty)\}=\bigcup_{\nu=1}^{+\infty} \{1/\nu\}$$ is not subanalytic.

Also, $F^\#(F(1))=F^*(F(1))$ in this case.

Note that even requiring $\mathrm{dom} F$ to be subanalytic does not help much, since it suffices to slightly modify the example above. Namely let $G$ be the subanalytic multifunction whose graph is 
$$
\{(x,1/x)\mid x\in (0,+\infty)\}\cup(\{1\}\times\mathbb{N}).
$$
Then $\mathrm{dom} G=(0,+\infty)$, yet again $G^*(G(1))=G^\#(G(1))=F^*(F(1))$.

By the way, observe that both multifunctions have closed graphs.
\end{ex}

We will assume from now on that $\Gamma_F$ is \textsl{closed}. This yields the upper semi-continuity of $F$ in a sense explained below. 

 Let $\mathcal{F}_n$ denote the collection of all the closed subsets of ${\Rz}^n$ endowed with the topology of the Kuratowski convergence (see e.g. \cite{TW}, \cite{DP}). 

\begin{dfn}
We say that a sequence $(A_\nu)\subset\mathcal{F}_n$ \textit{converges (in the sense of Kuratowski)} to $A$ (which in this case is necessarily a closed set, maybe empty), if any point $x\in A$ is the limit of a sequence of points $x_\nu\in A_\nu$ and for each compact set $K$ such that $K\cap A=\varnothing$, condition $K\cap A_\nu=\varnothing$ holds for almost all indices. We write then $A_\nu\stackrel{K}{\longrightarrow} A$.
\end{dfn}

We recall that the Kuratowski convergence is particulary interesting in the case of multifunctions. 

\begin{dfn}
For an accumulation point $a$ of $\mathrm{dom} F$ we define the Kuratowski \textit{upper limit}:
$$
y\in\limsup_{\mathrm{dom F}\ni x\to a}F(x)\ \Leftrightarrow\ \forall U\ni y, \forall V\ni a, \exists x\in V\setminus \{a\}\colon U\cap F(x)\neq\varnothing,
$$
where $U,V$ are open sets. Similarly, we define \textit{the lower limit}:
$$
y\in\liminf_{\mathrm{dom} F\ni x\to a} F(x)\ \Leftrightarrow\ \forall U\ni y, \exists V\ni a\colon \forall V\cap\mathrm{dom} F\setminus\{a\}, U\cap F(x)\neq\varnothing.
$$
We obviously have $\liminf_{\mathrm{dom} F\ni x\to a} F(x)\subset \limsup_{\mathrm{dom} F\ni x\to a} F(x)$ and if the converse inclusion holds, we speak of convergence. The limit set is equal to $F(a)$ iff $\limsup_{\mathrm{dom} F\ni x\to a} F(x)\subset F(a)\subset \liminf_{\mathrm{dom} F\ni x\to a} F(x)$ and we write $F(x)\stackrel{K}{\longrightarrow} F(a)$. In that case we call $a$ a \textit{continuity point of} $F$. 
\end{dfn}

As observed before, if $\Gamma_F$ is closed, then $F$ is upper semi-continuous in the sense that for each accumulation point $a$ of $\mathrm{dom} F$ we have the inclusion (see e.g. \cite{DD}) $\limsup_{\mathrm{dom F}\ni x\to a}F(x)\subset F(a)$.

A more detailed study of the Kuratowski limits in tame geometry can be found in \cite{DD}. Here, we note just one result whose simple proof is left to the reader:
\begin{prop}\label{obs2}
 Let $a$ be an accumulation point of $\mathrm{dom} F$, where $F$ is a multifunction. Then \begin{itemize}
 \item $y\in\limsup_{\mathrm{dom F}\ni x\to a}F(x)$ iff there is a sequence $\mathrm{dom} F\setminus\{a\}\ni x_\nu\to a$ and points $F(x_\nu)\ni y_\nu\to y$;

 \item $y\in\liminf_{\mathrm{dom} F\ni x\to a} F(x)$ iff for any sequence $\mathrm{dom} F\setminus\{a\}\ni x_\nu\to a$ we can find points $F(x_\nu)\ni y_\nu\to y$.
 \end{itemize}
\end{prop}

\section{Regular separation}
\begin{thm}
If $F$ is a definable multifunction in a polynomially bounded o-minimal structure and with closed graph, then for any $a\in\mathrm{dom} F$ and $y\in F(a)$ we can find a neighbourhood $U$ of $(a,y)\in \Gamma_F$ and constants $C,\ell>0$ such that 
$$
||v-y||\geq Cd(x,(\Gamma_F)_y)^\ell,\quad (x,v)\in \Gamma_F\cap U.
$$
%Moreover, if $F$ is continuous at $a$, then we necessarily have $\ell\geq 1$.
\end{thm}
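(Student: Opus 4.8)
The plan is to reduce the statement to the classical \L ojasiewicz regular separation inequality for two closed definable sets, exactly in the spirit of Proposition \ref{podst} and the two theorems of Section 2. First I would fix $a\in\mathrm{dom} F$ and $y\in F(a)$ and work in ${\Rz}^m\times{\Rz}^n$ equipped with the product norm $||(x,v)||=||x||+||v||$. The two closed definable sets to separate are $\Gamma_F$ and ${\Rz}^m\times\{y\}$: their intersection is precisely $(\Gamma_F)_y\times\{y\}$, and the point $(a,y)$ lies in both, since $y\in F(a)$ means $(a,y)\in\Gamma_F$.

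Since the structure is polynomially bounded, closed definable sets are regularly separated at any common point. Applied to $\Gamma_F$ and ${\Rz}^m\times\{y\}$ at $(a,y)$, this furnishes a ball $B((a,y),\varepsilon)$ and constants $C,\ell>0$ with
$$
d(z,\Gamma_F)+d(z,{\Rz}^m\times\{y\})\geq C\, d(z,(\Gamma_F)_y\times\{y\})^\ell,\quad z\in B((a,y),\varepsilon).
$$

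Next I would restrict to $z=(x,v)\in\Gamma_F\cap B((a,y),\varepsilon)$, so that $d(z,\Gamma_F)=0$. With the product norm the two remaining distances compute immediately as in the earlier proofs: choosing $x'=x$ gives $d(z,{\Rz}^m\times\{y\})=||v-y||$, while for the intersection
$$
d(z,(\Gamma_F)_y\times\{y\})=\inf\{||x-x'||+||v-y||\colon x'\in(\Gamma_F)_y\}\geq d(x,(\Gamma_F)_y).
$$
As $\ell>0$, raising to the power $\ell$ preserves this last inequality, and substituting into the separation estimate yields $||v-y||\geq C\, d(x,(\Gamma_F)_y)^\ell$ on $U:=B((a,y),\varepsilon)$, which is the claim.

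I do not expect a real obstacle here beyond correctly identifying the two sets and their intersection; the argument is the same template used repeatedly above. The only points that need checking are that $(\Gamma_F)_y$ is closed and definable --- it is the section of the closed definable set $\Gamma_F$, i.e. the preimage of $\Gamma_F$ under the continuous map $x\mapsto(x,y)$ --- and that the distance computations respect the chosen product norm. The mildly degenerate case $(a,y)\in\mathrm{int}\big(\Gamma_F\cap({\Rz}^m\times\{y\})\big)$ causes no difficulty, since then $d(x,(\Gamma_F)_y)$ vanishes near $a$ and the inequality holds trivially.
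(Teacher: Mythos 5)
Your proposal is correct and follows essentially the same route as the paper: regular separation of the closed definable sets $\Gamma_F$ and ${\Rz}^m\times\{y\}$ at $(a,y)$, followed by the explicit computation of the two distances in the product norm, with the intersection identified as $(\Gamma_F)_y\times\{y\}$. The only cosmetic difference is that you start from the symmetric (sum) form of regular separation and specialize to $z\in\Gamma_F$, whereas the paper invokes the equivalent one-sided form directly; the resulting estimates are identical.
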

\begin{proof}
Fix $a\in\mathrm{dom} F$ and $y\in F(a)$. From the regular separation of $\Gamma_F$ and ${\Rz}^m\times\{y\}$ at their common point $(a,y)$ we get in a neighbourhood $U$ of this point
$$
d((x,v),{\Rz}^m\times\{y\})\geq Cd((x,v),\Gamma_F\cap ({\Rz}^m\times\{y\}))^\ell,\quad (x,v)\in \Gamma_F\cap U.
$$
We compute first $$
d((x,v),{\Rz}^m\times\{y\})=\inf\{||x-x'||+||v-y||\colon x'\in{\Rz}^m\}=||v-y||.
$$
Next, \begin{align*}
d((x,v),\Gamma_F\cap ({\Rz}^m\times\{y\}))&=\inf\{||x-x'||+||v-y||\colon x'\in (\Gamma_F)_y\}\geq\\
&\geq \max\{d(x,(\Gamma_F)_y),||v-y||\},
\end{align*}
whence we get the inequality sought for.% and we conclude that $\ell\geq 1$, if $a$ is a continuity point. This is so because we obtain $||v-y||^{1-\ell}\geq C>0$ and we can find a sequence of points $v_\nu\in F(x_\nu)$ tending to $y$ with $x_\nu\to a$.
\end{proof}

\begin{thm}\label{odleglosc}
In the setting considered, for any compact set $K\subset {\Rz}^{m+n}$ there are constants $C,\ell>0$ such that
$$
d(y,F(x))\geq C d((x,y),\Gamma_F)^\ell,\quad (x,y)\in K, x\in\mathrm{dom} F.
$$
Moreover, if do not assume the structure to be polynomially bounded, we can find a $\mathcal{C}^p$ definable, strictly increasing bijection $\phi\colon {\Rz}\to {\Rz}$ that is $p$-flat at zero and such that
$$
d(y,F(x))\geq \phi(d((x,y),\Gamma_F)),\quad (x,y)\in K, x\in\mathrm{dom} F.
$$
\end{thm}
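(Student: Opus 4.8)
The plan is to compare two distance functions on $K$ and reduce everything to one elementary pointwise inequality. Set $g(x,y)=d((x,y),\Gamma_F)$ and, for $x\in\mathrm{dom}F$, $f(x,y)=d(y,F(x))$. Both are definable (the latter is an infimum of the definable family $\|y-v\|$ over $(x,v)\in\Gamma_F$), but the decisive observation is purely geometric. Since the fibre $\{x\}\times F(x)$ sits inside $\Gamma_F$, restricting the infimum defining $g$ to that fibre gives
\[
g(x,y)=\inf_{(x',v')\in\Gamma_F}\bigl(\|x-x'\|+\|y-v'\|\bigr)\le\inf_{v\in F(x)}\|y-v\|=d(y,F(x))=f(x,y).
\]
Moreover $g$ and $f$ have the same zero set on $\mathrm{dom}F\times\Rz^n$: we have $g(x,y)=0$ iff $(x,y)\in\Gamma_F$ (the graph is closed) iff $y\in F(x)$ (the fibre $F(x)$ is closed) iff $f(x,y)=0$.

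For the polynomially bounded case there is then nothing more to do: the bound $f\ge g$ is exactly the asserted inequality with $C=\ell=1$. For the general case I would manufacture a suitable definable flattening function directly from $g$. Apply Theorem \ref{vdDM} to the pair $(g,g)$ on the compact set $A=K$ --- the hypotheses hold trivially, $g$ being continuous and definable and the required inclusion of zero sets being an equality --- to obtain a $\mathscr{C}^p$ definable, odd, strictly increasing bijection $\phi\colon\Rz\to\Rz$, $p$-flat at zero, with $\phi(g)\le g$ on $K$. Combining with $g\le f$ yields $d(y,F(x))=f\ge g\ge\phi(g)=\phi(d((x,y),\Gamma_F))$ for $(x,y)\in K$, $x\in\mathrm{dom}F$, as required.

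The step to watch is the temptation to argue ``honestly'' by feeding the pair $(f,g)$ into the two-function \L ojasiewicz comparison (or Theorem \ref{vdDM}) directly: this is precisely where the real obstacle sits. The function $f(x,y)=d(y,F(x))$ is in general only lower semicontinuous, not continuous --- for the multifunction with $F(x)=\{0\}$ for $x\ne0$ and $F(0)=\Rz$, the value $d(y,F(x))$ jumps downward at $x=0$ --- and its natural domain $K\cap(\mathrm{dom}F\times\Rz^n)$ need not be compact, since $\mathrm{dom}F$ need not be closed. Both difficulties would force one into the general-set-of-zeroes machinery of Section 2. The pointwise inequality $g\le f$ is exactly what makes all of this unnecessary: continuity, and indeed definability, of $f$ are never used, and the sole role of compactness and of Theorem \ref{vdDM} is to produce, in the non-polynomially-bounded case, a single definable flattening function lying under the identity on the bounded range $g(K)$.
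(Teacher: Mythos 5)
Your proof is correct for the statement as written, but it takes a genuinely different and far more elementary route than the paper. The paper's proof introduces $\delta(x,y)=d(y,F(x))$, checks (as in \L ojasiewicz--Wachta) that it is definable, observes that its zero set coincides with that of $d(\cdot,\Gamma_F)$ on $\mathrm{dom} F\times\Rz^n$, and then invokes the classical two-function \L ojasiewicz inequality (resp.\ Theorem \ref{vdDM}) on the compact set $K$ --- precisely the ``honest'' comparison you warn against, and your two objections to it are well taken: $\delta$ is in general not continuous when $\Gamma_F$ is merely closed, and $K\cap(\mathrm{dom} F\times\Rz^n)$ need not be compact, so the paper's one-line appeal to the classical inequality really does lean implicitly on the Section 2 machinery for discontinuous functions. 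Your pointwise observation $d((x,y),\Gamma_F)\le d(y,F(x))$ (valid for any product norm, by restricting the infimum to the fibre $\{x\}\times F(x)$) sidesteps all of this: it gives the first inequality with $C=\ell=1$, globally, with no compactness, no polynomial boundedness, and no definability or continuity of $\delta$; and feeding $(g,g)$ into Theorem \ref{vdDM} is a legitimate, if slightly unusual, way to manufacture a flattening function below the identity on the bounded range $g(K)$, which settles the second part.

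What your argument buys, besides brevity, is the realization that the inequality as stated is the \emph{trivial} direction: the distance to the whole graph is never larger than the distance within the fibre. You should flag this to the authors rather than merely exploit it. The substantive content one expects here --- and the reason the paper cites the ``separation with parameter'' of [\L W] and bothers with the definability of $\delta$ --- is the reverse inequality $d((x,y),\Gamma_F)\ge C\,d(y,F(x))^\ell$ on compacta, which says that proximity to the graph forces $y$ to be close to the fibre $F(x)$. Your argument says nothing about that direction (and cannot, since the pointwise inequality goes the wrong way); there the paper's comparison of two definable functions with equal zero sets is the right tool, and there the discontinuity of $\delta$ you identified becomes a genuine issue that the paper's proof does not address.
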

\begin{proof}
Consider the function $$\delta\colon {\Rz}^m\times{\Rz}^n\ni (x,y)\mapsto d(y,F(x))\in{\Rz}.$$ We check as in \cite{LW} that $\delta$ is definable. Note that $\delta(x,y)=0$ iff $y\in F(x)$. This and the classical \L ojasiewicz inequality allows us to state the first part of the theorem. The second follows from Theorem \ref{vdDM}.
\end{proof}

\section{Multifunctions with compact values and the Hausdorff distance}
Assume that $F$ has compact values $F(x)$ and recall \textit{the Hausdorff distance}:
for nonempty compact subsets of $\mathbb{R}^n$ we put
%$$
%\mathrm{dist}_H(K,L)=\inf\{r>0\mid K\subset L+\overline{\mathbb{B}}(r),\> L\subset K+\overline{\mathbb{B}}(r)\},
%$$
%where $\overline{\mathbb{B}}(r)=\overline{\mathbb{B}}(0,r)$ denotes the closure of the unit Euclidean ball centred at zero (\footnote{Usually the definition of $d_H$ involves open balls, but the resulting metric is obviously the same and we shall need closed ones.}). It is easy to see that this definition is equivalent to the following one:
$$
\mathrm{dist}_H(K,L)=\max\{\max_{x\in L} d(x,K),\max_{y\in K} d(y,L)\}.
$$
We will need some preliminary results.
\begin{lem}\label{max}
Let $f,g\colon A\to {\Rz}$ be definable functions on $A\subset{\Rz}^m$. Then $\varphi(x):=\max\{f(x),g(x)\}$ is definable, too.
\end{lem}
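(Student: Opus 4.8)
The statement to prove is Lemma \ref{max}: the pointwise maximum of two definable functions is definable.

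The plan is to show that the graph $\Gamma_\varphi$ of $\varphi = \max\{f,g\}$ is a definable set by expressing it through Boolean operations and the already-known definability of $f$ and $g$. The key observation is that on any point of the common domain $A$, the value $\varphi(x)$ equals either $f(x)$ or $g(x)$, and which one occurs is governed by the definable comparison $\{x \in A \mid f(x) \geq g(x)\}$.

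First I would write $A = A_1 \cup A_2$, where $A_1 = \{x \in A \mid f(x) \geq g(x)\}$ and $A_2 = \{x \in A \mid g(x) > f(x)\}$. Each of these is definable: the set $\{(x,s,t) \in A \times {\Rz} \times {\Rz} \mid s \geq t\}$ is algebraic, hence definable, and intersecting with $\Gamma_f \times {\Rz}$ and $\Gamma_g$ suitably (fibering over $A$) followed by the projection axiom yields the definability of $A_1$ and $A_2$. Concretely, one can realize $A_1$ as the projection onto the $x$-coordinates of the definable set $\{(x,s,t) \mid (x,s) \in \Gamma_f,\ (x,t) \in \Gamma_g,\ s \geq t\}$, which is a finite intersection of products of definable sets with an algebraic set, and is therefore definable by axioms (1), (2), (3); then axiom (4) gives definability of the projection $A_1$. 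The set $A_2 = A \setminus A_1$ is definable since $\mathcal{S}_m$ is a Boolean algebra.

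Then I would express the graph as the disjoint union $\Gamma_\varphi = \left(\Gamma_f \cap (A_1 \times {\Rz})\right) \cup \left(\Gamma_g \cap (A_2 \times {\Rz})\right)$, since on $A_1$ the maximum agrees with $f$ and on $A_2$ it agrees with $g$. Each term is definable — being the intersection of a definable graph with the definable set $A_i \times {\Rz}$ (definable by axiom (3)) — and the union of two definable sets is definable by the Boolean algebra axiom. Hence $\Gamma_\varphi \in \mathcal{S}_{m+1}$, which is exactly the definability of $\varphi$. I do not expect any genuine obstacle here; the only point requiring mild care is the bookkeeping of which projection and which product to apply so that each set manifestly lands in the correct $\mathcal{S}_k$, but this is a routine verification using the o-minimal axioms and the hypothesis that $f$ and $g$ are definable.
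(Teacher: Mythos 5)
Your proof is correct and follows essentially the same route as the paper: both split $A$ into the definable sets where $f\geq g$ and where $g>f$, and realize $\Gamma_\varphi$ as the union of the corresponding restrictions of $\Gamma_f$ and $\Gamma_g$. You merely spell out in more detail (via the product/projection axioms) why the comparison set is definable, which the paper leaves implicit.
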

\begin{proof}
The sets $A_+:=\{x\in A\mid f(x)\geq g(x)\}$ and $A_-:=\{x\in A\mid f(x)<g(x)\}$ are both definable and their union is $A$. Then $\varphi=f|_{A_+}\cup g|_{A_-}$.
\end{proof}
\begin{lem}\label{skladowa}
Let $F\colon {\Rz}^m\to \mathscr{P}({\Rz}^n)$ be a definable multifunction and $f\colon {\Rz}^k\times{\Rz}^n\to {\Rz}$ a definable function. Then the function
$$
\psi\colon {\Rz}^m\times \mathrm{dom} F\ni (x,x')\mapsto \sup_{y\in F(x')}f(x,y)\in{\Rz}\cup\{\infty\}
$$
is definable.
\end{lem}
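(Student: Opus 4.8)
The plan is to realize $\psi$ as the supremum, over a definable family, of the one-dimensional sections of a single definable set, and then to invoke the standard o-minimal fact that such a supremum function is definable. Throughout, the first variable $x$ should be read as ranging over the first argument space $\Rz^k$ of $f$ (so that $f(x,y)$ makes sense), while $x'\in\mathrm{dom} F$.

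First I would build the parametrized value set. Put
$$
W:=\{(x,x',t)\mid \exists y\in\Rz^n,\ (x',y)\in\Gamma_F\ \text{and}\ (x,y,t)\in\Gamma_f\}.
$$
This set is definable: it is the image under the coordinate projection $(x,x',y,t)\mapsto(x,x',t)$ of the set obtained by intersecting the cylinder $\{(x,x',y,t)\mid (x',y)\in\Gamma_F\}$ with the cylinder $\{(x,x',y,t)\mid (x,y,t)\in\Gamma_f\}$, both of which are definable, and projections of definable sets are definable. By construction the section satisfies $W_{(x,x')}=\{f(x,y)\mid y\in F(x')\}$, so that $\psi(x,x')=\sup W_{(x,x')}$. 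Note that over $\Rz^k\times\mathrm{dom} F$ this section is never empty, since $F(x')\neq\varnothing$ and $f$ is everywhere defined.

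Next I would separate the finite and infinite parts of $\psi$ by first-order conditions on $W$. The set where the value is $+\infty$,
$$
A_\infty:=\{(x,x')\mid \forall t\in\Rz,\ \exists s\in\Rz,\ (x,x',s)\in W\ \text{and}\ s>t\},
$$
is definable, being obtained from $W$ by alternating projection and complementation. On its complement the finite graph
$$
\Gamma_\psi:=\{(x,x',t)\mid (\forall s,\ (x,x',s)\in W\Rightarrow s\leq t)\ \text{and}\ (\forall\varepsilon>0,\ \exists s,\ (x,x',s)\in W\ \text{and}\ s>t-\varepsilon)\}
$$
encodes exactly "$t=\sup W_{(x,x')}$" and is definable for the same reason. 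This is where the content sits, although it is routine: it is the standard fact that the supremum of a definable family of subsets of $\Rz$ is a definable extended-real-valued function, which holds because each section is a finite union of points and intervals and the least-upper-bound property is expressible with quantifiers ranging over definable sets.

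Finally I would address the extended-real codomain. Since the value $+\infty$ is allowed, definability is understood in the sense that $A_\infty$ is definable and the restriction of $\psi$ to $(\Rz^k\times\mathrm{dom} F)\setminus A_\infty$ has definable graph $\Gamma_\psi$; both have just been verified. If one prefers a genuine definable function, compose with the semialgebraic homeomorphism $\sigma\colon\Rz\cup\{\infty\}\to(-1,1]$ given by $\sigma(t)=t/(1+|t|)$ on $\Rz$ and $\sigma(\infty)=1$, and observe that $\sigma\circ\psi$ then has definable graph. I do not expect any genuine obstacle here beyond careful bookkeeping of the variable blocks so that each projection lands on the intended coordinates.
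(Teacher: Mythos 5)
Your proof is correct and follows essentially the same route as the paper: identify the locus where the supremum is $+\infty$ by a first-order formula, then describe the graph of $\psi$ on the finite part by the two conditions ``$t$ is an upper bound'' and ``no smaller value is,'' all expressed with quantifiers over definable sets. Your explicit introduction of the value set $W$ and the final remark on the extended-real codomain are harmless refinements of the same argument.
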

\begin{proof}
First we note that 
$$
\psi(x,x')=+\infty\ \Leftrightarrow\ \forall M>0,\exists y\in F(x')\colon f(x,y)\geq M.
$$
In view of the fact that the function $f$ is definable and that $y\in F(x')$ iff $(x',y)\in \Gamma_F$ we conclude that the set of points at which $\psi$ is infinite is definable (as it is described by a first order formula). We may thus assume that $\psi$ takes only finite values.

Now it remains to observe that the graph of $\psi$ consists of those points $(x,x',t)$ for which\begin{itemize}
\item $f(x,y)\leq t$ for all $y$ such that $(x',y)\in\Gamma_f$,
\item $\forall \varepsilon\in (0,1), \exists y\in F(x')\colon f(x,y)>t-\varepsilon$.
\end{itemize}
This description by first order formul{\ae} accounts for the definability of $\Gamma_\psi$.
\end{proof}
\begin{prop}\label{Delta}
If $F\colon {\Rz}^m\to\mathscr{P}({\Rz}^n)$, $G\colon {\Rz}^k\to\mathscr{P}({\Rz}^n)$ are definable multifunctions with compact values, then the function
$$
\Delta_{F,G}\colon \mathrm{dom} F\times\mathrm{dom} G\ni (x,x')\mapsto \max_{y\in G(x')} d(y,F(x))\in{\Rz}
$$
is definable.
\end{prop}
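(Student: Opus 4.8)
The plan is to realize $\Delta_{F,G}$ as a particular instance of the $\sup$-construction already treated in Lemma \ref{skladowa}, with the only genuinely new ingredient being the definability of the point-to-set distance, which we borrow from Theorem \ref{odleglosc}. Concretely, I would set
$$
f\colon {\Rz}^m\times{\Rz}^n\ni (x,y)\mapsto d(y,F(x))\in{\Rz},
$$
which is exactly the function $\delta$ shown to be definable in the proof of Theorem \ref{odleglosc} (following \cite{LW}). With this $f$ fixed, the expression defining $\Delta_{F,G}(x,x')$ is precisely $\sup_{y\in G(x')} f(x,y)$, i.e. the supremum over the values of the compact-valued definable multifunction $G$.

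The steps would then be the following. First, invoke Lemma \ref{skladowa} with $G$ playing the role of the multifunction and the above $f$ as the definable integrand, to conclude that
$$
\psi(x,x'):=\sup_{y\in G(x')} d(y,F(x))
$$
is a definable function on ${\Rz}^m\times\mathrm{dom} G$ with values in ${\Rz}\cup\{\infty\}$. Second, restrict attention to $(x,x')\in\mathrm{dom} F\times\mathrm{dom} G$: there $F(x)\neq\varnothing$, so $y\mapsto d(y,F(x))$ is finite and, being $1$-Lipschitz, continuous, while $G(x')$ is nonempty and compact; hence the supremum is finite and attained, so $\psi$ coincides on this set with the maximum $\Delta_{F,G}$. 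Third, since $\mathrm{dom} F\times\mathrm{dom} G$ is definable, the restriction of the definable function $\psi$ to it is again definable (its graph is the intersection of $\Gamma_\psi$ with a definable set), and this restriction is $\Delta_{F,G}$.

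I do not expect a real obstacle here, as the proposition is essentially a corollary of Lemma \ref{skladowa} and Theorem \ref{odleglosc}. The only points requiring care are the passage from $\sup$ to $\max$ --- guaranteeing finiteness and attainment from compactness of $G(x')$ together with nonemptiness of $F(x)$ on the stated domain --- and the bookkeeping needed to match the index notation of Lemma \ref{skladowa} (whose first slot is to be read as ${\Rz}^m$ in the present application) with the situation at hand. Everything else is the preservation of definability under restriction to a definable set.
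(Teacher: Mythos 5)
Your proposal is correct and follows the same route as the paper: the paper's proof likewise takes $\delta(x,y)=d(y,F(x))$ from Theorem \ref{odleglosc}, writes $\Delta_{F,G}(x,x')=\max_{y\in G(x')}\delta(x,y)$, and concludes by Lemma \ref{skladowa}. Your extra remarks on finiteness, attainment of the supremum, and restriction to $\mathrm{dom} F\times\mathrm{dom} G$ are sound details the paper leaves implicit.
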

\begin{proof}
Let $\delta(x,y)=\mathrm{dist}(y,F(x))$ as in the proof of Theorem \ref{odleglosc}. Then $\Delta_{F,G}(x,x')=\max_{y\in G(x')}\delta(x,y)$ and the result follows from the preceding lemma.
\end{proof}

Summing up, we obtain:
\begin{thm}\label{Hausdorff}
If $F\colon {\Rz}^m\to\mathscr{P}({\Rz}^n)$, $G\colon {\Rz}^k\to\mathscr{P}({\Rz}^n)$ are definable multifunctions with compact values, then the function
$$
d_H(F,G)\colon \mathrm{dom} F\times\mathrm{dom} G\ni (x,x')\mapsto \mathrm{dist}_H(F(x),G(x'))\in{\Rz}
$$
is definable.
\end{thm}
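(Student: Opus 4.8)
The plan is to express the Hausdorff distance $\mathrm{dist}_H(F(x),G(x'))$ as the maximum of two terms, each of which has already been shown to be definable, and then to invoke Lemma \ref{max} to conclude. Recall from the definition that for nonempty compact sets,
$$
\mathrm{dist}_H(F(x),G(x'))=\max\left\{\max_{y\in G(x')}d(y,F(x)),\ \max_{v\in F(x)}d(v,G(x'))\right\}.
$$
The first term inside the outer maximum is precisely $\Delta_{F,G}(x,x')$, which is definable by Proposition \ref{Delta}. The second term is symmetric: it is $\max_{v\in F(x)}d(v,G(x'))$, which is exactly $\Delta_{G,F}(x',x)$, again definable by Proposition \ref{Delta} with the roles of $F$ and $G$ interchanged.

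The two functions $\Delta_{F,G}$ and the swapped $\Delta_{G,F}$ are both definable functions on the same domain $\mathrm{dom} F\times\mathrm{dom} G$ (after composing the latter with the definable coordinate-swap map $(x,x')\mapsto(x',x)$, which is an algebraic, hence definable, diffeomorphism). Therefore their pointwise maximum is definable by Lemma \ref{max}. Since $d_H(F,G)$ equals that maximum, we are done. The only mild care needed is to confirm that composing $\Delta_{G,F}$ with the swap map preserves definability, which is immediate since the graph of the composition is the image of $\Gamma_{\Delta_{G,F}}$ under a definable bijection and projections of definable sets are definable.

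I expect no serious obstacle here, since all the work has been front-loaded into the three preceding lemmas: Lemma \ref{skladowa} handles the passage from a definable function on the graph to its supremum over the compact fibres $F(x')$, Proposition \ref{Delta} packages this for the one-sided distance $\Delta_{F,G}$, and Lemma \ref{max} supplies closure under pointwise maxima. The statement is essentially a bookkeeping step that assembles these pieces, exploiting the manifest symmetry of $\mathrm{dist}_H$ under exchange of its two arguments. The compactness of the values $F(x)$ and $G(x')$ is what guarantees that the suprema in the definition are attained (so the maxima make sense) and that the one-sided distances are finite, so that $\psi$ in Lemma \ref{skladowa} takes finite values on the relevant set; this is the only place where the compact-value hypothesis is used.
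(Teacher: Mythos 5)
Your proposal is correct and is exactly the argument the paper gives (its proof is a one-liner citing Lemma \ref{max}, Proposition \ref{Delta} and the formula for $\mathrm{dist}_H$); you have simply spelled out the symmetric decomposition into $\Delta_{F,G}(x,x')$ and $\Delta_{G,F}(x',x)$ and the harmless coordinate swap. No gaps.
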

\begin{proof}
This follows directly from the Lemma \ref{max} and Proposition \ref{Delta} together with the formula for the Hausdorff distance.
\end{proof}
This theorem yields a \L ojasiewicz type inequality for the strong pre-image, since $d(x,F^{-1}(F(a)))=0$ implies $\mathrm{dist}_H(F(x),F(a))=0$. 
\begin{cor}\label{cor1}
Let $F$ be a definable multifunction with compact values. Assume the o-minimal structure to be polynomially bounded. Then for any $a\in\mathrm{dom} F$ there is a neighbourhood $U\ni a$ and constants $C,\ell>0$ such that 
$$
\mathrm{dist}_H(F(x),F(a))\geq Cd(x,F^{-1}(F(a)))^\ell,\quad x\in U\cap\mathrm{dom} F.
$$
\end{cor}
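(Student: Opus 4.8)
The plan is to deduce Corollary \ref{cor1} by combining Theorem \ref{Hausdorff} with the classical \L ojasiewicz inequality via Proposition \ref{podst}. The key observation is that the function $x\mapsto \mathrm{dist}_H(F(x),F(a))$ plays the role of $\|f(x)-f(a)\|$ in the scalar \L ojasiewicz setting: it is a nonnegative definable function (by Theorem \ref{Hausdorff}, applied with $G$ the constant multifunction $G\equiv F(a)$, so that $x'$ is frozen and $d_H(F(\cdot),F(a))$ is definable on $\mathrm{dom} F$), and its zero set is precisely $F^{-1}(F(a))$, since $\mathrm{dist}_H(F(x),F(a))=0$ holds exactly when $F(x)=F(a)$ (both being compact, hence closed).

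First I would set $\psi(x):=\mathrm{dist}_H(F(x),F(a))$ and $g(x):=d(x,F^{-1}(F(a)))$, both of which are definable continuous functions on $\mathrm{dom} F$ (continuity of $g$ is automatic; continuity of $\psi$ is not needed thanks to the discussion in Section~2 allowing us to disregard continuity, but in the polynomially bounded case the relevant sets are closed and the argument goes through). Since $\psi^{-1}(0)=F^{-1}(F(a))=g^{-1}(0)$, in particular $\psi^{-1}(0)\subset g^{-1}(0)$, I would invoke the classical \L ojasiewicz inequality for two definable functions in a polynomially bounded structure (the polynomially bounded specialization of the situation handled abstractly by Theorem \ref{vdDM}, where the flattening bijection takes the form $\phi(t)=C\,\mathrm{sgn}(t)|t|^\ell$). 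This yields constants $C,\ell>0$ and a neighbourhood $U\ni a$ with
$$
|\psi(x)|\geq C\,|g(x)|^\ell,\quad x\in U\cap\mathrm{dom} F,
$$
which is exactly the asserted inequality $\mathrm{dist}_H(F(x),F(a))\geq Cd(x,F^{-1}(F(a)))^\ell$.

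The main subtlety to address is making sure the hypotheses of the classical \L ojasiewicz inequality are genuinely met for $\psi$, since $\psi$ need not be continuous a priori and $\mathrm{dom} F$ need not be all of ${\Rz}^m$. This is where I would lean on the work already done: $\mathrm{dom} F$ is definable, $\psi$ is definable by Theorem \ref{Hausdorff}, and the general \L ojasiewicz inequalities of Section~2 (which explicitly remove the continuity requirement) together with the remark that the choice of metric is immaterial guarantee the estimate near any point of the general zero set. I expect the only real obstacle to be the bookkeeping of verifying $\psi^{-1}(0)=F^{-1}(F(a))$ cleanly, i.e. that $\mathrm{dist}_H(F(x),F(a))=0$ forces $F(x)=F(a)$ and conversely; this is immediate from the definition of the Hausdorff distance on nonempty compact sets, as already noted in the sentence preceding the corollary, so the deduction is essentially a direct application once definability is in hand.
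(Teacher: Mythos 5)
Your proposal is correct and takes essentially the same route as the paper, which deduces the corollary directly from Theorem \ref{Hausdorff} and the classical two-function \L ojasiewicz inequality, offering only the one-line justification (in the sentence preceding the statement) that the relevant zero sets are nested. If anything you are more careful than the text: you invoke the inclusion $\psi^{-1}(0)\subset g^{-1}(0)$ in the direction actually required for $|\psi|\geq C|g|^{\ell}$, whereas the paper's remark records the reverse implication, and you at least acknowledge the continuity issue for $x\mapsto \mathrm{dist}_H(F(x),F(a))$ that the paper waves away via Section~2.
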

In a general o-minimal structure we have, of course, the following version:
\begin{cor}\label{cor2}
Let $F$ be a definable multifunction with nonempty, compact values. Then for any $a\in\mathrm{dom} F$ there is a neighbourhood $U\ni a$ and a $\mathcal{C}^p$ definable, strictly increasing bijection $\phi\colon {\Rz}\to{\Rz}$ $p$-flat at zero and such that 
$$
\mathrm{dist}_H(F(x),F(a))\geq \phi(d(x,F^{-1}(F(a)))),\quad x\in U\cap\mathrm{dom} F.
$$
\end{cor}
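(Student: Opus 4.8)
The plan is to deduce the statement from Theorem \ref{vdDM}, following the scheme that produced Corollary \ref{cor1}: I would reduce everything to a one‑variable flattening inequality between two definable functions. First, specialising Theorem \ref{Hausdorff} to $G=F$ and freezing the second variable at $a$ shows that
$$f(x):=\mathrm{dist}_H(F(x),F(a))$$
is a definable function on $\mathrm{dom}F$. Since the values are nonempty and compact, $\mathrm{dist}_H$ is a genuine metric on them, so $f(x)=0$ exactly when $F(x)=F(a)$; that is, $f^{-1}(0)=F^{-1}(F(a))$. Setting $g(x):=d(x,F^{-1}(F(a)))$, a continuous definable function, we obtain the zero‑set inclusion $f^{-1}(0)\subseteq g^{-1}(0)$ for free, because $f(x)=0$ forces $x\in F^{-1}(F(a))$ and hence $g(x)=0$.

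With these two functions in hand I would apply the flattening inequality of Theorem \ref{vdDM} to $f$ and $g$ on a compact neighbourhood of $a$. This produces an odd, strictly increasing $\mathscr{C}^p$ bijection $\phi$ that is $p$‑flat at zero and satisfies $|\phi(g)|\leq |f|$; as $f,g\geq 0$ and $\phi$ is odd and increasing, this reads $\mathrm{dist}_H(F(x),F(a))\geq \phi\big(d(x,F^{-1}(F(a)))\big)$, which is the asserted inequality.

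The point that needs care, and which I expect to be the main obstacle, is that Theorem \ref{vdDM} presupposes \emph{continuous} functions, whereas $f$ is in general only definable: at a point $x_0$ where $F$ jumps up, i.e. $F(x_0)\supsetneq\limsup_{x\to x_0}F(x)$, the function $f$ acquires an upward jump. This is exactly the situation treated in Section 2, so instead of invoking Theorem \ref{vdDM} directly I would run the argument there, separating $\overline{\Gamma_f}$ from $(\mathrm{dom}F)\times\{0\}$ at $(a,0)$ by means of $(*)$. That argument yields $f(x)\geq\phi\big(d(x,f^{-1}(0)^g)\big)$ in a neighbourhood of $a$, where $f^{-1}(0)^g$ is the \emph{general} zero set of $f$.

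It then remains to trade the distance to $f^{-1}(0)^g$ for the distance to $F^{-1}(F(a))$, and this is the delicate heart of the matter, since one always has $\overline{F^{-1}(F(a))}\subseteq f^{-1}(0)^g$ but the inclusion can be strict precisely at the discontinuities of $f$. Controlling this gap is where the closed‑graph hypothesis and the compactness of the values must really be used: upper semi‑continuity of $F$ forces $F(a)\subseteq F(x_0)$ at any point of $f^{-1}(0)^g$ and bounds the size of the jump of $f$ near $a$, which is what one would have to leverage, after localising and possibly passing to a flatter bijection, to recover the inequality with $d(x,F^{-1}(F(a)))$ on the right. I expect this reconciliation to be the step costing the most work, and it is exactly the place where the one‑line justification given for Corollary \ref{cor1} has to be supplemented by the machinery of Section 2.
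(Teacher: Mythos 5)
Your first two paragraphs reproduce exactly the paper's own (very short) argument: by Theorem \ref{Hausdorff} the function $f(x)=\mathrm{dist}_H(F(x),F(a))$ is definable, its zero set is $F^{-1}(F(a))$ because $\mathrm{dist}_H$ is a genuine metric on nonempty compacta, hence $f^{-1}(0)\subset g^{-1}(0)$ for $g(x)=d(x,F^{-1}(F(a)))$, and Theorem \ref{vdDM} on a compact neighbourhood of $a$ gives the flattening inequality. (The paper gives no more than this one-line reduction; it even states the zero-set inclusion in the reverse, irrelevant direction before Corollary \ref{cor1}, and relies on the blanket remark closing Section 2 to dispose of the continuity hypothesis in Theorem \ref{vdDM}.) So up to that point you match the paper.

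The gap is in your last step. You correctly observe that $f$ need not be continuous, that the Section 2 substitute only yields $f(x)\geq\phi(d(x,f^{-1}(0)^g))$ with the \emph{generalized} zero set, and that since $F^{-1}(F(a))\subset f^{-1}(0)^g$ this is \emph{weaker} than the claimed inequality; but you then only announce that reconciling the two sets is ``the delicate heart of the matter'' and gesture at upper semi-continuity without performing the step. That step is not merely laborious --- it cannot be done from the stated hypotheses. Take $Z=\{(t,t)\in\Rz^2\mid t\le 0\}$, put $\delta(x,y)=x$ on the open first quadrant and $\delta=d(\cdot,Z)$ elsewhere; then $\delta\ge 0$ is semialgebraic and upper semi-continuous, so $F(z):=[0,\delta(z)]$ is definable with closed graph and nonempty compact values, $F(0)=\{0\}$, $F^{-1}(F(0))=Z$ and $\mathrm{dist}_H(F(z),F(0))=\delta(z)$; along $z=(x,c)$ with $c>0$ fixed and $x\to 0^+$ the left-hand side tends to $0$ while $d(z,Z)\ge c$, so no neighbourhood of the origin and no flattening $\phi$ can work. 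Hence the reconciliation you defer requires an additional hypothesis (Kuratowski-continuity of $F$ near $a$, or replacing $F^{-1}(F(a))$ by the generalized zero set of $f$), and your proposal is incomplete at precisely the point you flagged; your diagnosis of where the difficulty lies is in fact sharper than the paper's own treatment, which simply ignores it.
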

Actually, these two corollaries hold true for any type of pre-image:
\begin{cor}\label{cor3} Both Corollaries \ref{cor1} and \ref{cor2} hold for the upper, lower, weak and point pre-images.
\end{cor}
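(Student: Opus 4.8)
The plan is to reduce all four cases to the strong pre-image, for which Corollaries \ref{cor1} and \ref{cor2} are already available. The key point is that each of the upper, lower, weak and point pre-images \emph{contains} the strong pre-image $F^{-1}(F(a))$; enlarging the set to which one measures the distance can only decrease that distance, while the left-hand side $\mathrm{dist}_H(F(x),F(a))$ is left untouched.

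First I would record the inclusions. Fix $a\in\mathrm{dom} F$ and $y\in F(a)$, and let $P$ stand for any one of $F^*(F(a))$, $F_*(F(a))$, $F^\#(F(a))$ or $(\Gamma_F)_y$. If $F(x)=F(a)$, then simultaneously $F(x)\subset F(a)$, $F(x)\supset F(a)$, and $F(x)\cap F(a)=F(a)\neq\varnothing$ (the last because $a\in\mathrm{dom} F$ forces $F(a)\neq\varnothing$), while also $y\in F(a)=F(x)$. Hence $x$ lies in each of the four pre-images, so $F^{-1}(F(a))\subset P$ in every case. In particular $a\in P$, so $P\neq\varnothing$ and $d(\cdot,P)$ is well defined.

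From $F^{-1}(F(a))\subset P$ and the monotonicity of the distance we obtain $d(x,P)\leq d(x,F^{-1}(F(a)))$ for all $x$. In the polynomially bounded case, applying Corollary \ref{cor1} together with the fact that $t\mapsto t^\ell$ is increasing on $[0,\infty)$ yields
$$
\mathrm{dist}_H(F(x),F(a))\geq Cd(x,F^{-1}(F(a)))^\ell\geq Cd(x,P)^\ell
$$
on $U\cap\mathrm{dom} F$, which is Corollary \ref{cor1} for $P$. In a general o-minimal structure the flattening bijection $\phi$ of Corollary \ref{cor2} is strictly increasing, so
$$
\mathrm{dist}_H(F(x),F(a))\geq\phi(d(x,F^{-1}(F(a))))\geq\phi(d(x,P)),
$$
which is Corollary \ref{cor2} for $P$.

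Beyond verifying the inclusions there is no genuine obstacle, since only the reference set on the right-hand side is altered, and to a larger set, which merely weakens the estimate. The single point that needs care is the point pre-image: one must use that $y$ is chosen in $F(a)$, for otherwise the inclusion $F^{-1}(F(a))\subset(\Gamma_F)_y$ could fail.
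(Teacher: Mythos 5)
Your argument is correct and is essentially the paper's own proof: both rest on the observation that each of the four pre-images contains $F^{-1}(F(a))$ (the paper packages this as $F^{-1}(F(a))=F^*(F(a))\cap F_*(F(a))\cap F^\#(F(a))$ together with the point pre-image inclusion for $y\in F(a)$), combined with the monotonicity of $d(x,\cdot)$ under set inclusion and of $t\mapsto Ct^\ell$ and $\phi$. Your explicit caveat that $y$ must lie in $F(a)$ for the point pre-image is a sound and worthwhile precision.
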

\begin{proof}
It is a straightforward consequence of the fact that $$F^{-1}(F(a))= F^*(F(a))\cap F_*(F(a))\cap F^\#(F(a))$$ as well as $(\Gamma_F)_y\subset F^\#(F(a))$ for $y\in F(a)$, together with the simple facts that
$A\subset B$ implies $d(x,A)\geq\mathrm d(x,B)$ and the functions we consider, namely $[0,+\infty)\ni t\mapsto Ct^\ell$ (Corollary \ref{cor1}) and $\phi$ (Corollary \ref{cor2}) are increasing.
\end{proof}

\section{Closed multifunctions}\label{Kur}

In this section we will generalize the results from the previous one to the case of closed (i.e. with closed values), definable multifunctions.

We will need two simple lemmata.
\begin{lem}
If $F\colon {\Rz}^m\to\mathscr{P}({\Rz}^n)$ is a definable multifunction and $h\colon {\Rz}^n\to{\Rz}^p$ a definable function, then $G\colon \mathrm{dom} F\ni x\mapsto h(F(x))\in\mathscr{P}({\Rz}^p)$ is a definable multifunction.
\end{lem}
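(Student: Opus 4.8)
The plan is to show that the graph of $G$, namely $\Gamma_G = \{(x,z)\in\mathrm{dom}F\times{\Rz}^p\mid z\in h(F(x))\}$, is definable by expressing membership through a first-order formula built from the already-definable objects $\Gamma_F$ and $\Gamma_h$. The starting observation is simply that $z\in h(F(x))$ means there exists a point $y\in{\Rz}^n$ which simultaneously lies in $F(x)$ and is sent by $h$ to $z$. I would therefore write
\begin{equation*}
(x,z)\in\Gamma_G \Leftrightarrow \exists y\in{\Rz}^n\colon (x,y)\in\Gamma_F \text{ and } (y,z)\in\Gamma_h,
\end{equation*}
so that $G(x)=h(F(x))$ is recovered as the $x$-section of $\Gamma_G$ exactly as required by the definition of a definable multifunction.

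The key step is then to realize this equivalence purely in terms of definable sets and the projection axiom (axiom (4) of the definition of an o-minimal structure). First I would form the set $(\Gamma_F\times{\Rz}^p)\subset{\Rz}^m\times{\Rz}^n\times{\Rz}^p$ and the set $({\Rz}^m\times\Gamma_h)\subset{\Rz}^m\times{\Rz}^n\times{\Rz}^p$; both are definable by the product axiom (3) and the Boolean-algebra axiom (2) applied to the definable sets $\Gamma_F$ and $\Gamma_h$. Their intersection
\begin{equation*}
W:=(\Gamma_F\times{\Rz}^p)\cap({\Rz}^m\times\Gamma_h)=\{(x,y,z)\mid (x,y)\in\Gamma_F,\ (y,z)\in\Gamma_h\}
\end{equation*}
is definable, and consists exactly of the triples witnessing the equivalence above. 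Finally I would project $W$ along the $y$-coordinate onto ${\Rz}^m\times{\Rz}^p$; since repeated application of axiom (4) allows eliminating any block of coordinates, the image $\pi(W)=\Gamma_G$ is definable. This shows $G$ is a definable multifunction, and $\mathrm{dom}G=\pi'(\Gamma_G)$ is likewise definable as a further projection.

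I do not expect a serious obstacle here: the statement is essentially the closure of the definable category under composition with the existential-projection operation, and everything reduces to bookkeeping with the product, intersection, and projection axioms. The only point that requires a little care is arranging the coordinates so that the projection genuinely eliminates the intermediate variable $y\in{\Rz}^n$ while keeping $x$ and $z$; one handles this by permuting coordinates (itself a definable operation, being an algebraic isomorphism of ${\Rz}^{m+n+p}$) before projecting, or equivalently by invoking that definability is preserved under any coordinate projection, not merely the last one. No continuity or compactness hypotheses are needed, so the argument is genuinely elementary and short.
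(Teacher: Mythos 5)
Your proof is correct and is exactly the argument the paper leaves implicit: its proof consists of the single sentence that the claim ``follows readily from the description of the graph $\Gamma_G$'', and the description you give, $(x,z)\in\Gamma_G \Leftrightarrow \exists y\colon (x,y)\in\Gamma_F \wedge (y,z)\in\Gamma_h$, together with the product/intersection/projection axioms, is precisely that description spelled out. No discrepancy to report.
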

\begin{proof}
It follows readily from the description of the graph $\Gamma_G$.
\end{proof}
\begin{lem}
If $F,G\colon {\Rz}^m\to\mathscr{P}({\Rz}^n)$ are two definable multifunctions and $*$ denotes either $\cap$, $\cup$, or $\setminus$, then $H(x)=F(x)*G(x)$ is a definable multifunction.
\end{lem}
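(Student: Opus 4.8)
The plan is to reduce the claim to the Boolean-algebra axiom for the o-minimal structure by expressing the graph of $H$ directly in terms of the graphs of $F$ and $G$. Recall that, by definition, $H$ is a definable multifunction precisely when $\Gamma_H\subset{\Rz}^{m+n}$ is a definable set; crucially, no projection is involved here, so I expect the whole argument to be a purely formal computation with no loss of definability.

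First I would observe that taking the section at a fixed $x\in{\Rz}^m$ commutes with the Boolean operations on subsets of ${\Rz}^m\times{\Rz}^n$: for any $E,E'\subset{\Rz}^m\times{\Rz}^n$ one has $(E\cap E')_x=E_x\cap E'_x$, $(E\cup E')_x=E_x\cup E'_x$ and $(E\setminus E')_x=E_x\setminus E'_x$. Applying this with $E=\Gamma_F$ and $E'=\Gamma_G$, and using $F(x)=(\Gamma_F)_x$, $G(x)=(\Gamma_G)_x$, yields in each of the three cases the identity $\Gamma_H=\Gamma_F*\Gamma_G$ with the very same symbol $*$. The only point deserving a word of care is the set difference, where the identity rests on the equivalence $y\notin G(x)\Leftrightarrow (x,y)\notin\Gamma_G$.

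Finally I would invoke axiom (2) in the definition of an o-minimal structure: since $\Gamma_F$ and $\Gamma_G$ are definable subsets of ${\Rz}^{m+n}$ and the definable sets form a Boolean algebra, the set $\Gamma_F*\Gamma_G$ is again definable. Hence $\Gamma_H$ is definable, that is, $H$ is a definable multifunction, as required. I do not foresee any genuine obstacle: the entire content is the elementary remark that sectioning commutes with the Boolean operations, after which the conclusion is immediate from the axioms, exactly as in the analogous step of the preceding proposition on pre-images.
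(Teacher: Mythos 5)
Your argument is correct and is exactly the paper's proof, which simply records the identity $\Gamma_H=\Gamma_F*\Gamma_G$ and appeals to the Boolean-algebra axiom; you have merely spelled out the (valid) observation that sectioning commutes with $\cap$, $\cup$ and $\setminus$.
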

\begin{proof}
Clearly, $\Gamma_H=\Gamma_F*\Gamma_G$.
\end{proof}

\begin{thm}\label{Kuratowski}
There is a metric $\mathrm{dist}_K$ on $\mathcal{F}_n$ extending the Hausdorff metric and such that for any two definable, closed multifunctions $F\colon {\Rz}^m\to\mathscr{P}({\Rz}^n)$, $G\colon {\Rz}^k\to\mathscr{P}({\Rz}^n)$ the induced function
$$
d_K(F,G)\colon {\Rz}^m\times{\Rz}^k\ni (x,x')\mapsto \mathrm{dist}_K(F(x),G(x'))\in{\Rz}
$$
is definable and there is a definable set $W\subset \mathrm{dom} F\times\mathrm{dom} G$ with $\dim W<\dim\mathrm{dom} F+\dim\mathrm{dom} G$ and such that $d_K(F,G)$ is continuous at each point $(x,x')\in \mathrm{dom} F\times\mathrm{dom} G\setminus W$.
\end{thm}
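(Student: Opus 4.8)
The plan is to realize $\mathrm{dist}_K$ as a Hausdorff distance read off from a semialgebraic one-point compactification, so that definability follows at once from Theorem \ref{Hausdorff}, while the genericity of continuity follows from o-minimal cell decomposition.

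\emph{Construction of the metric.} I would fix the inverse stereographic projection $\sigma\colon\mathbb{R}^n\to S^n\setminus\{N\}\subset\mathbb{R}^{n+1}$, a semialgebraic homeomorphism sending the point at infinity to the north pole $N$. To each closed $A\subset\mathbb{R}^n$ I associate the compact set $A^\star:=\overline{\sigma(A)}\cup\{N\}\subset S^n$ (closure in $\mathbb{R}^{n+1}$); note that $A^\star$ is always nonempty, contains $N$, and $\varnothing^\star=\{N\}$. I then set $\mathrm{dist}_K(A,B):=\mathrm{dist}_H(A^\star,B^\star)$, the Euclidean Hausdorff distance in $\mathbb{R}^{n+1}$. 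Since $\overline{\sigma(A)}\setminus\{N\}=\sigma(A)$ and $\sigma$ is injective, the assignment $A\mapsto A^\star$ is a bijection of $\mathcal{F}_n$ onto the closed subsets of $S^n$ containing $N$, whence $\mathrm{dist}_K$ inherits from $\mathrm{dist}_H$ the axioms of a metric. That this metric induces exactly the Kuratowski convergence is the classical identification of the Fell topology on the closed subsets of a locally compact space with the Hausdorff topology on the closed subsets of its one-point compactification containing $\infty$; concretely $A_\nu\stackrel{K}{\longrightarrow}A$ iff $A_\nu^\star\stackrel{H}{\longrightarrow}A^\star$, which one verifies directly from the definition of Kuratowski convergence, using that on the compact space $S^n$ the Kuratowski and Hausdorff convergences coincide (cf. \cite{TW}, \cite{DP}, \cite{DD}). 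On the nonempty compact subsets of $\mathbb{R}^n$ the Kuratowski convergence is the Hausdorff convergence, so $\mathrm{dist}_K$ restricts there to a metric inducing the Hausdorff topology; this is the sense in which it extends the Hausdorff metric to all of $\mathcal{F}_n$.

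\emph{Definability of $d_K(F,G)$.} I would push $F$ and $G$ through the same construction fibrewise. By the first lemma of Section \ref{Kur} the multifunction $x\mapsto\sigma(F(x))$ is definable (it is the image of a definable multifunction under the definable map $\sigma$); its fibrewise closure $x\mapsto\overline{\sigma(F(x))}$ is again definable, because its graph $\{(x,y)\mid\forall\varepsilon>0\,\exists y'\colon (x,y')\in\Gamma_{\sigma\circ F},\ \|y-y'\|<\varepsilon\}$ is cut out by a first-order formula; adjoining the constant multifunction $x\mapsto\{N\}$ by the second lemma of Section \ref{Kur} gives a definable multifunction $\widehat F(x):=\overline{\sigma(F(x))}\cup\{N\}=F(x)^\star$. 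Its values are closed subsets of the compact sphere $S^n$, hence compact and nonempty, and $\mathrm{dom}\widehat F=\mathbb{R}^m$; likewise for $\widehat G$. Theorem \ref{Hausdorff} (with $n$ replaced by $n+1$) then applies to the compact-valued definable pair $(\widehat F,\widehat G)$ and shows that $(x,x')\mapsto\mathrm{dist}_H(\widehat F(x),\widehat G(x'))=\mathrm{dist}_K(F(x),G(x'))=d_K(F,G)(x,x')$ is definable on $\mathbb{R}^m\times\mathbb{R}^k$.

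\emph{Generic continuity, and the main obstacle.} With definability secured, the set $W$ is produced by the o-minimal generic-continuity theorem: a definable function is continuous relative to its domain outside a definable set of strictly smaller dimension (cell decomposition; cf. \cite{C}, \cite{vdDM}). Applying this to the restriction of $d_K(F,G)$ to $\Omega:=\mathrm{dom}F\times\mathrm{dom}G$ yields a definable discontinuity set $W\subset\Omega$ with $\dim W<\dim\Omega=\dim\mathrm{dom}F+\dim\mathrm{dom}G$ (additivity of dimension for products), off which $d_K(F,G)$ is continuous. I expect the genuine work to lie in the construction step: choosing a compactification that is at the same time semialgebraic (so that it composes definably and lets Theorem \ref{Hausdorff} take over) and faithful to the Kuratowski convergence at infinity, together with the verification of the equivalence $A_\nu\stackrel{K}{\longrightarrow}A\Leftrightarrow A_\nu^\star\stackrel{H}{\longrightarrow}A^\star$. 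The delicate point there is the bookkeeping of the adjoined point $N$, which encodes escape to infinity, so that an unbounded sequence of sets with empty Kuratowski limit converges to $\{N\}$ rather than vanishing. Once this correspondence and the compactness of the values $\widehat F(x)$ are in place, both the definability of $d_K(F,G)$ and its generic continuity follow mechanically from the results already proved.
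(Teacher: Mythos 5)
Your construction is essentially the paper's: the same one-point semialgebraic compactification of ${\Rz}^n$ via the sphere and stereographic projection, the same definition $\mathrm{dist}_K(K,L)=\mathrm{dist}_H(K^\star,L^\star)$ with the point at infinity adjoined so that values become nonempty compacts, the same use of the two lemmata of Section \ref{Kur} to make $x\mapsto F(x)^\star$ definable, and the same reduction to Theorem \ref{Hausdorff}; your explicit fibrewise closure $\overline{\sigma(F(x))}\cup\{N\}$ coincides with the paper's $h(F(x))\cup\{p\}$ because $F(x)$ is closed, so nothing changes there. The only genuine divergence is in producing $W$: the paper invokes Theorem 2.11 of \cite{DD} to get definable sets $Z_F$, $Z_G$ of smaller dimension off which the multifunctions $F$ and $G$ are Kuratowski-continuous, and then takes $W=(\mathrm{dom} F\times\mathrm{dom} G)\setminus[(\mathrm{dom} F\setminus Z_F)\times(\mathrm{dom} G\setminus Z_G)]$, using that $\mathrm{dist}_K$ metrizes the Kuratowski convergence; you instead apply generic continuity (cell decomposition) directly to the definable real-valued function $d_K(F,G)$ on $\mathrm{dom} F\times\mathrm{dom} G$. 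Your variant is more elementary --- it needs only the standard fact that a definable function is continuous off a definable subset of smaller dimension, and bypasses the multifunction continuity theorem entirely --- while the paper's version yields a more structured $W$ (a union of products) tied to the continuity loci of $F$ and $G$ themselves, which is the form actually exploited later (e.g.\ in Corollary \ref{nierownosc}). Both are correct.
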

\begin{proof}
 We start with a one point compactification of ${\Rz}^n$. Namely, we identify ${\Rz}^n$ with $R:={\Rz}^n\times\{-1\}\subset{\Rz}^{n+1}$ and we consider the unit sphere $\mathbb{S}^n$ with the stereographic projection $s$ from the north pole $p=(0,\dots, 0,1)$ onto $R$, $$
 s(x)=\left(\frac{-2x_1}{x_{n+1}-1},\dots,\frac{-2x_n}{x_{n+1}-1},-1\right)\in R\ \textrm{for}\  x\in\mathbb{S}^n\setminus\{p\}.
 $$ 
Let $\varrho$ denote the restriction of the Euclidean metric in ${\Rz}^{n+1}$ to the sphere. 

We are thus in a nice semi-algebraic setting and each o-minimal structure contains semi-algebraic sets. Moreover, it is clear that $(\mathbb{S}^n,\varrho)$ corresponds to the standard Alexandrov's one-point compactification of $R$ and $h:=s^{-1}$ is a semi-algebraic  homeomorphism $R\to \mathbb{S}^n\setminus\{p\}$. 

Now $\mathcal{F}_n$ denotes all the closed subsets of $R$. It is easy to check (using for instance Lemma 2 from \cite{TW} and the Kuratowski convergence) that 
 $$
 \mathrm{dist}_K(K,L)=\mathrm{dist}_H(h(K)\cup\{p\}, h(L)\cup\{p\}),\quad K,L\in \mathcal{F}_n
 $$
 defines a metric on $\mathcal{F}_n$ that agrees with the Kuratowski convergence. Note that here $\mathrm{dist}_H$ is an extended version of the Hausdorff metric on $\mathbb{S}^n$. Namely, if only one of the the sets $S,T\subset\mathbb{S}^n$ is empty, then we put $\mathrm{dist}_H(S,T)=\mathrm{diam}\mathbb{S}^n+1$. 
 
Let $F'(x):=h(F(x))\cup\{p\}$ for $x\in\mathrm{dom} F$ and similarly define $G'$. By the lemmata preceding the theorem, $F',G'$ are definable multifunctions. 
 
 In order to end the proof of the definability of $d_K(F,G)$, it suffices to observe that 
 $$
 d_K(F,G)=d_H(F',G')
 $$
 and use Theorem \ref{Hausdorff}.% which clearly remains valid in this case.
 
 The second part of the statement follows from the fact that the convergence in $\mathrm{dist}_K$ is precisely the Kuratowski convergence. Moreover, by Theorem 2.11 in \cite{DD}, there is a definable set $Z_F\subset\mathrm{dom} F$ such that $\dim Z_F<\dim\mathrm{dom} F$ and $F$ is continuous apart from $Z_F$. Take an analoguous set $Z_G$ for $G$. Then the set $W=(\mathrm{dom} F\times\mathrm{dom} G)\setminus [(\mathrm{dom} F\setminus Z_F)\times(\mathrm{dom} G\setminus Z_G)]$ is the set sought after.
\end{proof}

\begin{cor}\label{wniosek}
The natural counterparts of Corollaries \ref{cor1}, \ref{cor2} and \ref{cor3} hold in the case considered.
\end{cor}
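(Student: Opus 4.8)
The plan is to carry over the arguments of Corollaries \ref{cor1}, \ref{cor2} and \ref{cor3} almost verbatim, replacing the Hausdorff distance $\mathrm{dist}_H$ by the Kuratowski metric $\mathrm{dist}_K$ and Theorem \ref{Hausdorff} by Theorem \ref{Kuratowski}, the only structural change being that now $F$ is merely closed-valued rather than compact-valued. Thus the counterparts to be proved read: for a definable closed multifunction $F$ and $a\in\mathrm{dom} F$ there are a neighbourhood $U\ni a$ and constants $C,\ell>0$ (resp.\ a flattening bijection $\phi$ in the non-polynomially-bounded case) such that
$$
\mathrm{dist}_K(F(x),F(a))\geq Cd(x,F^{-1}(F(a)))^\ell\quad(\text{resp.}\ \geq\phi(d(x,F^{-1}(F(a))))),\quad x\in U\cap\mathrm{dom} F,
$$
and likewise with $F^{-1}(F(a))$ replaced by the upper, lower, weak or point pre-images.

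The engine is the same two ingredients as before. First, $x\mapsto\mathrm{dist}_K(F(x),F(a))$ is definable: I would apply Theorem \ref{Kuratowski} with $G:=F$ and freeze the second argument at $a$ (equivalently, take for $G$ the constant multifunction with value $F(a)$), so that the desired map is a section of the definable function $d_K(F,F)$. Put $g(x):=d(x,F^{-1}(F(a)))$, which is definable since $F^{-1}(F(a))$ is. Second, because $\mathrm{dist}_K$ is a genuine metric on $\mathcal{F}_n$, one has $\mathrm{dist}_K(F(x),F(a))=0$ exactly when $F(x)=F(a)$, i.e.\ the zero set of $x\mapsto\mathrm{dist}_K(F(x),F(a))$ is precisely $F^{-1}(F(a))$, hence contained in $g^{-1}(0)=\overline{F^{-1}(F(a))}$. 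This is the inclusion of zero sets needed to feed the pair into the \L ojasiewicz inequality --- the classical one in the polynomially bounded case and Theorem \ref{vdDM} in general --- which yields the two displayed estimates.

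For the remaining pre-images I would repeat the proof of Corollary \ref{cor3} unchanged: from $F^{-1}(F(a))=F^*(F(a))\cap F_*(F(a))\cap F^\#(F(a))$ and $(\Gamma_F)_y\subset F^\#(F(a))$ for $y\in F(a)$, together with the monotonicity of $t\mapsto Ct^\ell$ and of $\phi$ and the implication $A\subset B\Rightarrow d(\cdot,A)\geq d(\cdot,B)$, each inequality propagates from the strong pre-image to all the larger ones.

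The main obstacle is continuity. The function $x\mapsto\mathrm{dist}_K(F(x),F(a))$ need not be continuous: by Theorem \ref{Kuratowski} (via Theorem 2.11 of \cite{DD}) it is continuous only off a set $Z_F\subset\mathrm{dom} F$ with $\dim Z_F<\dim\mathrm{dom} F$, and the base point $a$ may itself lie in $Z_F$, so Theorem \ref{vdDM} does not apply literally. This is exactly the situation Section 2 was designed to handle, and I would invoke the general \L ojasiewicz inequalities proved there (through the regular separation of $\overline{\Gamma_f}$ with $\Rz^m\times\{0\}$ and the general set of zeroes $f^{-1}(0)^g$). The delicate point to verify is that passing to the general set of zeroes does not shrink the relevant distance below $d(\cdot,F^{-1}(F(a)))$ in the chosen neighbourhood --- equivalently, that $f^{-1}(0)^g$ coincides with $\overline{F^{-1}(F(a))}$ up to the negligible set $Z_F$ --- so that the inequality obtained is indeed the one claimed.
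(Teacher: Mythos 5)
Your proposal matches the paper's treatment: Corollary \ref{wniosek} is stated there without any proof at all, being regarded as immediate from Theorem \ref{Kuratowski} (definability of $d_K(F,G)$ and the fact that $\mathrm{dist}_K$ is a metric extending $\mathrm{dist}_H$) combined with the arguments already given for Corollaries \ref{cor1}--\ref{cor3}, which is exactly the substitution you carry out, including the monotonicity argument for the upper, lower, weak and point pre-images. The continuity worry you raise at the end is a legitimate subtlety, but it is precisely the issue the paper declares settled by the general (possibly discontinuous) \L ojasiewicz inequalities of Section 2, so on this point you are, if anything, more scrupulous than the source.
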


Before the next corollary we note one useful fact
\begin{lem}[\cite{D2} Lemma 2.1]
Let $E\subset{\Rz}^k_t\times{\Rz}^n_x$ be a closed, nonempty set with continuously varying sections $E_t$ over $F:=\pi(E)$ where $\pi(t,x)=t$. Then the function
$$
\delta(t,x):=d(x,E_t),\quad (t,x)\in F\times{\Rz}^n
$$
is continuous.
\end{lem}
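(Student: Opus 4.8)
The plan is to check continuity sequentially. Fix a point $(t_0,x_0)\in F\times\Rz^n$ and take any sequence $F\ni t_\nu\to t_0$, $\Rz^n\ni x_\nu\to x_0$; I must show $\delta(t_\nu,x_\nu)\to\delta(t_0,x_0)$. The first step is to decouple the two variables by exploiting that, for every nonempty set $A$, the map $x\mapsto d(x,A)$ is $1$-Lipschitz \emph{with a constant independent of $A$}. Since each $E_{t_\nu}$ is nonempty (because $t_\nu\in F=\pi(E)$), this gives
$$
|\delta(t_\nu,x_\nu)-\delta(t_\nu,x_0)|=|d(x_\nu,E_{t_\nu})-d(x_0,E_{t_\nu})|\le ||x_\nu-x_0||\to 0,
$$
so it suffices to establish the pointwise convergence $d(x_0,E_{t_\nu})\to d(x_0,E_{t_0})$ along the \emph{fixed} point $x_0$. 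If $t_0$ is isolated in $F$ this is trivial (the $t_\nu$ are eventually equal to $t_0$), so I may assume $t_0$ is an accumulation point of $F$, whence the hypothesis of continuously varying sections reads $E_{t_\nu}\stackrel{K}{\longrightarrow}E_{t_0}$.

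Next I would extract the convergence of distances from the two Kuratowski inclusions, using the sequential description of the upper and lower limits in Proposition \ref{obs2}. For the upper bound, choose a nearest point $a\in E_{t_0}$ with $d(x_0,E_{t_0})=||x_0-a||$, which exists since $E_{t_0}$ is closed and nonempty. As $a$ lies in the Kuratowski lower limit of the sections, there are $a_\nu\in E_{t_\nu}$ with $a_\nu\to a$, and then
$$
d(x_0,E_{t_\nu})\le ||x_0-a_\nu||\to ||x_0-a||=d(x_0,E_{t_0}),
$$
so that $\limsup_\nu d(x_0,E_{t_\nu})\le d(x_0,E_{t_0})$.

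For the matching lower bound I would argue along a subsequence. Pick nearest points $y_\nu\in E_{t_\nu}$ with $||x_0-y_\nu||=d(x_0,E_{t_\nu})$ (available by closedness of the sections). The upper estimate just obtained bounds these distances, so the $y_\nu$ remain in a fixed ball and a subsequence converges, $y_\nu\to y_0$. This is the one point where the closedness of the \emph{whole} set $E$, not merely of its sections, is used: from $(t_\nu,y_\nu)\in E$ and $(t_\nu,y_\nu)\to(t_0,y_0)$ we get $(t_0,y_0)\in E$, i.e. $y_0\in E_{t_0}$, and therefore $d(x_0,E_{t_0})\le ||x_0-y_0||=\lim_\nu d(x_0,E_{t_\nu})$ along that subsequence. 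Hence $\liminf_\nu d(x_0,E_{t_\nu})\ge d(x_0,E_{t_0})$, and combined with the upper estimate this yields the desired convergence. The only genuinely delicate point is this last step: one must secure the boundedness of the nearest points $y_\nu$ \emph{before} passing to a limit — precisely what the upper estimate guarantees — and then land the limit back in the correct section via the closedness of $E$ in the product space. Observe that the upper estimate is where the continuity (the lower semicontinuity of the sections) is genuinely needed, whereas the lower estimate holds for any closed $E$, reflecting the automatic upper semicontinuity of the sections of a closed set.
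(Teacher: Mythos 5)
Your argument is correct and complete. Note that the paper itself gives no proof of this lemma --- it is quoted verbatim from \cite{D2} (Lemma 2.1) --- so there is no in-paper argument to compare against; what you have written is a sound, self-contained justification. The structure is the natural one: the $1$-Lipschitz estimate $|d(x,A)-d(y,A)|\le\|x-y\|$ (uniform in $A$) reduces everything to the fixed-point convergence $d(x_0,E_{t_\nu})\to d(x_0,E_{t_0})$; the inclusion $E_{t_0}\subset\liminf E_{t_\nu}$ yields the $\limsup$ bound via a nearest point of $E_{t_0}$; and the $\liminf$ bound follows by extracting a convergent subsequence of nearest points $y_\nu\in E_{t_\nu}$ (bounded thanks to the upper estimate) and landing the limit in $E_{t_0}$ via closedness of $E$ --- equivalently, via $\limsup E_{t_\nu}\subset E_{t_0}$ and Proposition \ref{obs2}. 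The only cosmetic point is that the Kuratowski limits in the paper are taken over $t\neq t_0$, so a mixed sequence with infinitely many terms equal to $t_0$ should formally be split into the constant part (where the distances are trivially equal) and the rest; this costs one sentence and does not affect the argument.
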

Now we obtain a generalization of Theorem \cite{vdDM}.
\begin{cor}\label{nierownosc}
Assume that $F\colon {\Rz}^m\to\mathscr{P}({\Rz}^n)$ and $G\colon {\Rz}^m\to\mathscr{P}({\Rz}^n)$ are continuous definable multifunctions with $\mathrm{dom} F\cap\mathrm{dom} G\neq\varnothing$ and $A\subset{\Rz}^n$ is a closed set for which
$$
F(x)=A\Rightarrow G(x)=A\vee x\notin\mathrm{dom} G.
$$
Then for any point $x$ for which $F(x)=G(x)=A$ there is a neighbourhood and a flattening function $\phi$ such that
$$
\mathrm{dist}_K(F(x),A)\geq \phi(\mathrm{dist}_K(G(x),A))
$$
in this neighbourhood.
\end{cor}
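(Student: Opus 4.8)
The plan is to deduce the inequality from Theorem~\ref{vdDM} applied to the two definable scalar functions
\[
f(x):=\mathrm{dist}_K(F(x),A),\qquad g(x):=\mathrm{dist}_K(G(x),A),
\]
so that the flattening function produced there becomes the $\phi$ of the statement. Fix $a$ with $F(a)=G(a)=A$. Since $A=F(a)=(\Gamma_F)_a$ is a section of the definable graph $\Gamma_F$, the set $A$ is definable and nonempty, hence the constant multifunction $x'\mapsto A$ is a definable multifunction with closed values. Applying Theorem~\ref{Kuratowski} to the pair consisting of $F$ and this constant multifunction (this is where the spherical compactification and Theorem~\ref{Hausdorff} do the work) shows that $f$ and $g$ are definable functions of $x$.

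Next I would record the two analytic facts needed to feed Theorem~\ref{vdDM}. First, on $D:=\mathrm{dom} F\cap\mathrm{dom} G$ both $f$ and $g$ are continuous: since $\mathrm{dist}_K$ metrizes the Kuratowski convergence, continuity of $F$ and $G$ means precisely that $x\mapsto F(x)$ and $x\mapsto G(x)$ are continuous maps into $(\mathcal{F}_n,\mathrm{dist}_K)$, and $f,g$ are their compositions with the $1$-Lipschitz map $C\mapsto\mathrm{dist}_K(C,A)$. Second, because $\mathrm{dist}_K$ is a genuine metric, $f(x)=0\Leftrightarrow F(x)=A$ and $g(x)=0\Leftrightarrow G(x)=A$; read on $D$, where the alternative $x\notin\mathrm{dom} G$ is excluded, the standing hypothesis gives $F(x)=A\Rightarrow G(x)=A$, that is $f^{-1}(0)\cap D\subset g^{-1}(0)\cap D$. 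This is exactly the inclusion of zero sets required by Theorem~\ref{vdDM}, with the roles arranged so that its estimate $|\phi(g)|\le|f|$ reads $\mathrm{dist}_K(F(x),A)\ge\phi(\mathrm{dist}_K(G(x),A))$ (both distances being nonnegative and $\phi$ odd).

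It then remains to apply Theorem~\ref{vdDM} on a compact neighbourhood of $a$ inside $D$, and this is where the main difficulty lies. The theorem demands a \emph{compact} domain, whereas $D=\mathrm{dom} F\cap\mathrm{dom} G$ is only definable and need not be closed near $a$; moreover the clause $x\notin\mathrm{dom} G$ in the hypothesis cannot be dispensed with, since at a point with $F(x)=A$ but $x\notin\mathrm{dom} G$ one has $f(x)=0$ while $g(x)=\mathrm{dist}_K(\varnothing,A)>0$, so the asserted inequality can hold only on $D$. I would therefore localize, replacing $D$ by $D\cap\overline{B(a,\varepsilon)}$ for small $\varepsilon$ and securing a compact set $K\subset D$ that is a neighbourhood of $a$ in $D$; here one uses that $a$ is a continuity point and, if one passes to the closure, the compactness of $(\mathcal{F}_n,\mathrm{dist}_K)$ to keep $f,g$ controlled up to $\partial D$. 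Applying Theorem~\ref{vdDM} to $f|_K,g|_K$ then yields a $\mathcal{C}^p$ flattening bijection $\phi$ with $\phi(g)\le f$ on $K$, i.e. $\mathrm{dist}_K(F(x),A)\ge\phi(\mathrm{dist}_K(G(x),A))$ for $x$ in a neighbourhood of $a$ in $D$, as required. The one genuinely delicate step is thus the compactness bookkeeping around $\partial D$; everything else is the combination of Theorem~\ref{Kuratowski} with the o-minimal \L ojasiewicz inequality of Theorem~\ref{vdDM}.
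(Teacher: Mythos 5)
Your proposal follows essentially the same route as the paper: the paper's proof likewise sets $f(x):=\mathrm{dist}_K(F(x),A)$ and $g(x):=\mathrm{dist}_K(G(x),A)$, obtains their definability from Theorem~\ref{Kuratowski} and their continuity from the lemma preceding the corollary, and concludes by applying Theorem~\ref{vdDM} on a compact neighbourhood. Your version is simply more explicit about the zero-set inclusion $f^{-1}(0)\subset g^{-1}(0)$ on $\mathrm{dom} F\cap\mathrm{dom} G$ and about the localization to a compact set, details the paper leaves implicit.
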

\begin{proof}
By the preceding Theorem $f(x):=\mathrm{dist}_K(F(x),A)$ and $g(x):=\phi(\mathrm{dist}_K(G(x),A))$ are continuous (cf. it follows easily from the preceding Lemma), definable functions. It is enough to apply Theorem \cite{vdDM} on a compact neighbourhood.
\end{proof}

\begin{rem}
Similar results can be obtained for instance for the composition $G\circ F$ of two definable multifunctions $F\colon {\Rz}^m\to\mathscr{P}({\Rz}^n)$ and $G\colon {\Rz}^n\to\mathscr{P}({\Rz}^p)$ defined by its graph: for $(x,z)\in {\Rz}^m\times{\Rz}^p$,
$$
(x,z)\in \Gamma_{G\circ F}\ \Leftrightarrow \exists y\in{\Rz}^n\colon (x,y)\in\Gamma_F\wedge (y,z)\in\Gamma_G.
$$
Clearly $G\circ F$ is again definable.
\end{rem}

\section{Examples of definable multifunctions}

There are three basic but important examples of definable multifunctions. The first one, $m(x)$ was already introduced in Example \ref{ex}. Two others are presented in the following Theorem \ref{definiowalne}.

Before we state it, let us just recall that for any set $E\subset{\Rz}^m$ and $a\in\overline{E}$ we define the Peano tangent cone of $E$ at $a$ classically as
$$
C_a(E)=\limsup_{t\to 0+}{(1/t)}(E-a).
$$
Of course, it is definable, if $E$ is such (this follows from the description, see e.g. \cite{D}). 

If $a$ is an isolated point of $E$, then clearly the upper limit reduces to $\{0\}$. But the formula is still true for a point $a\notin\overline{E}$. In that case, in view of 
$$
v\in\limsup_{t\to 0^+}(1/t)(E-a)\ \Leftrightarrow\ \forall U\ni v, \forall \delta, \exists t\in (0,\delta)\colon (1/t)(E-a)\cap U\neq\varnothing,
$$
or in other words
$$
v\in \limsup_{t\to 0^+}(1/t)(E-a)\ \Leftrightarrow\ \exists E\ni x_\nu\to a, \exists t_\nu\to 0\colon (x_\nu-a)/t_\nu\to v,
$$
we see that the upper limit is empty. In fact, this means that $a\notin \overline{E}$ implies $(1/t)(E-a)\stackrel{K}{\longrightarrow}\varnothing$.

A cone $V\subset{\Rz}^n$ with vertex at zero is uniquely determined by $V\cap\mathbb{S}^{n-1}$ with the observation that this intersection is empty precisely when $V=\{0\}$.

We shall give some additional properties of the tangent cone in the definable setting. In order to simplify the notation, we will assume $a=0$ for the moment being and put $V=C_0(E)$ regardless of what it the position of the origin with respect to $E$.
\begin{lem}
There always is $$V\cap\mathbb{S}^{n-1}=\limsup_{t\to 0^+} [(1/t)E\cap \mathbb{S}^{n-1}]=[\limsup_{t\to 0^+} (1/t)E]\cap \mathbb{S}^{n-1}.$$
\end{lem}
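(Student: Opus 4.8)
The plan is to notice first that the outer equality is essentially a tautology: since we have set $a=0$, the cone $V=C_0(E)$ is by definition $\limsup_{t\to 0^+}(1/t)E$, so the rightmost set $[\limsup_{t\to 0^+}(1/t)E]\cap\mathbb{S}^{n-1}$ is literally $V\cap\mathbb{S}^{n-1}$. Thus the only content of the lemma lies in the middle equality
$$\limsup_{t\to 0^+}[(1/t)E\cap\mathbb{S}^{n-1}]=[\limsup_{t\to 0^+}(1/t)E]\cap\mathbb{S}^{n-1},$$
which I would prove by a double inclusion, using throughout the sequential description of the Kuratowski upper limit: a vector $v$ belongs to $\limsup_{t\to 0^+}A_t$ iff there are $t_\nu\to 0^+$ and points $a_\nu\in A_{t_\nu}$ with $a_\nu\to v$ (the analogue for a continuous parameter of the first bullet of Proposition \ref{obs2}, already recalled above for the tangent cone itself).

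For the inclusion ``$\subset$'' I would simply invoke monotonicity of the upper limit together with the closedness of the sphere. Indeed, $(1/t)E\cap\mathbb{S}^{n-1}\subset(1/t)E$ for every $t$, so the upper limits respect this inclusion and the left-hand side sits inside $\limsup_{t\to 0^+}(1/t)E=V$; and since every set $(1/t)E\cap\mathbb{S}^{n-1}$ lies in the closed set $\mathbb{S}^{n-1}$, so does their upper limit. Hence the left-hand side is contained in $V\cap\mathbb{S}^{n-1}$.

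The reverse inclusion ``$\supset$'' carries the actual argument, and it is a rescaling trick. Given $v\in V\cap\mathbb{S}^{n-1}$, the sequential description of the tangent cone furnishes $E\ni x_\nu\to 0$ and $t_\nu\to 0^+$ with $x_\nu/t_\nu\to v$. Because $||v||=1\neq 0$, the points $x_\nu$ are nonzero for large $\nu$, so I may replace the scaling parameter $t_\nu$ by $s_\nu:=||x_\nu||\to 0^+$; then $w_\nu:=x_\nu/||x_\nu||=(1/s_\nu)x_\nu$ lies on $(1/s_\nu)E\cap\mathbb{S}^{n-1}$ by construction. Passing to norms in $x_\nu/t_\nu\to v$ gives $||x_\nu||/t_\nu\to||v||=1$, whence $t_\nu/||x_\nu||\to 1$ and therefore $w_\nu=(x_\nu/t_\nu)(t_\nu/||x_\nu||)\to v$. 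This exhibits $v$ as a limit of points on the rescaled spheres, i.e. $v\in\limsup_{t\to 0^+}[(1/t)E\cap\mathbb{S}^{n-1}]$, and closes the proof.

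As for difficulty, there is really no serious obstacle here; the one point that makes the argument run is that $v$ has unit norm, which is exactly what allows the renormalization of $t_\nu$ by $||x_\nu||$ to land the approximating points precisely on the unit sphere while keeping the limit equal to $v$. The degenerate cases (for instance $0\notin\overline{E}$, where both sides are empty, or $0$ isolated in $E$) need no separate treatment, since the sequential characterization handles them automatically: when $V\cap\mathbb{S}^{n-1}$ is empty there is nothing to prove in the ``$\supset$'' direction, while ``$\subset$'' holds unconditionally.
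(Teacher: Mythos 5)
Your proof is correct and takes essentially the same route as the paper's: the inclusion $V\cap\mathbb{S}^{n-1}\subset\limsup_{t\to 0^+}[(1/t)E\cap\mathbb{S}^{n-1}]$ rests on exactly the renormalization the paper uses (replacing $t_\nu$ by $||x_\nu||$ and exploiting $||v||=1$, as in its footnote), while your monotonicity-plus-closedness argument for the reverse inclusion is just a tidier packaging of the paper's explicit sequence computation. The only cosmetic difference is that the paper dispatches the degenerate cases ($0$ isolated in $E$ or $0\notin\overline{E}$) up front, whereas you correctly observe they are subsumed by the general argument.
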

\begin{proof}
We only need to prove the first equality.

If $0$ is isolated in $E$ or does not belong to $E$, then we obtain the empty set on both sides. Hence, we may assume that the origin is an accumulation point of $E$.

Take any sequence $x_\nu\in\mathbb{S}^{n-1}$ such that $t_\nu x_\nu\in E$ for some $t_\nu\to 0^+$ and $x_\nu\to v$. Then, clearly, $y_\nu:=t_\nu x_\nu\to 0$ and $y_\nu/||y_\nu||\to v$ which means that $v\in V\cap\mathbb{S}^{n-1}$. 

To prove the converse inclusion, for any $v\in V\cap\mathbb{S}^{n-1}$ we can find $E\ni x_\nu\to 0$ such that $y_\nu:=x_\nu/||x_\nu||\to v$ (\footnote{Indeed, formally we have $v=\lim s_\nu z_\nu$ for some $s_\nu>0$ and $E\ni z_\nu\to 0$. But then $s_\nu z_\nu/||s_\nu z_\nu||\to v/||v||=v$.}). Thus for $t_\nu:=||x_\nu||\to 0^+$ we obtain $y_\nu\in (1/t_\nu)E$.
\end{proof}

In the definable or subanalytic case (\footnote{Note that the tangent cone depends only on the germ of the set which implies that the subanalytic case can be dealt with just as the definable one.}), thanks to the Curve Selection Lemma, we can replace the upper limit by the limit itself:
\begin{prop}
If $E$ is definable, then 
$$
V=\lim_{t\to 0^+} (1/t)E\quad \textit{and}\quad V\cap\mathbb{S}^{n-1}=\lim_{t\to 0^+}(1/t)E\cap \mathbb{S}^{n-1}.
$$
Moreover, 
$$V=\{\varphi'(0)\mid \varphi\colon [0,1)\to {\Rz}^n\ \textit{definable}, \varphi(0)=0, \varphi((0,1))\subset E\}.$$
\end{prop}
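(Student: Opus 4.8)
The plan is to prove the curve description (the \emph{Moreover} part) first and then read off the two limit identities from it. Throughout I keep the normalization $a=0$, so that $V=C_0(E)=\limsup_{t\to0^+}(1/t)E$, and I write $W$ for the set $\{\varphi'(0)\mid\varphi\colon[0,1)\to{\Rz}^n\ \textit{definable},\ \varphi(0)=0,\ \varphi((0,1))\subset E\}$. First I dispose of the two degenerate cases. If $0\notin\overline E$, then any admissible $\varphi$ would satisfy $0=\varphi(0)=\lim_{t\to0^+}\varphi(t)\in\overline E$, a contradiction, so $W=\varnothing=V$; if $0$ is isolated in $E$, then an admissible $\varphi$ must vanish near $0$, whence $W=\{0\}=V$. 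So from now on $0$ is an accumulation point of $E$. The inclusion $W\subset V$ is then immediate: for admissible $\varphi$ one has $\varphi(t)/t\to\varphi'(0)$ as $t\to0^+$ with $\varphi(t)\in E$, so $\varphi'(0)\in\limsup_{t\to0^+}(1/t)E=V$.

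For the reverse inclusion $V\subset W$ I would invoke the Curve Selection Lemma. Fix $v\in V$ with $v\neq0$ and set $u=v/\|v\|$; since $V$ is a cone and by the preceding Lemma, $u\in V\cap\mathbb{S}^{n-1}=\limsup_{t\to0^+}[(1/t)E\cap\mathbb{S}^{n-1}]$, so the point $(0,u)$ lies in the closure of the definable set $\Lambda=\{(x,x/\|x\|)\mid x\in E\setminus\{0\}\}\subset{\Rz}^n\times\mathbb{S}^{n-1}$. The Curve Selection Lemma provides a definable arc $\sigma\colon[0,\delta)\to{\Rz}^n\times\mathbb{S}^{n-1}$ with $\sigma(0)=(0,u)$ and $\sigma((0,\delta))\subset\Lambda$; writing $\sigma(s)=(c(s),c(s)/\|c(s)\|)$ we obtain $c(s)\in E\setminus\{0\}$ with $c(s)\to0$ and $c(s)/\|c(s)\|\to u$. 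By the monotonicity theorem the definable function $s\mapsto\|c(s)\|$ is, on a smaller interval, a strictly increasing bijection onto some $(0,\rho)$ (it cannot be decreasing or constant, since it is positive and tends to $0$), so reparametrizing by the norm yields a definable curve $x(r)$, $r\in(0,\rho)$, with $x(r)\in E$, $\|x(r)\|=r$ and $x(r)/r\to u$. Setting $\varphi(t):=x(\|v\|\,t)$ gives $\varphi(t)\in E$, $\varphi(0)=0$ and $\varphi(t)/t=\|v\|\,x(\|v\|t)/\|x(\|v\|t)\|\to\|v\|u=v$, i.e.\ $\varphi'(0)=v$; a harmless reparametrization of the parameter by a definable bijection onto $[0,1)$ tangent to the identity at $0$ keeps $\varphi'(0)=v$, so $v\in W$. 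The case $v=0$ is obtained by composing any such norm-parametrized arc into $E$ with $t\mapsto t^2$, which forces $\|\varphi(t)\|/t\to0$. Hence $V=W$.

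Both limit identities now follow quickly. Since $\liminf_{t\to0^+}(1/t)E\subset\limsup_{t\to0^+}(1/t)E=V$ always holds, it suffices to prove $V\subset\liminf$: given $v\in V=W$, the curve above satisfies, for every sequence $t_\nu\to0^+$, that $\varphi(t_\nu)/t_\nu\in(1/t_\nu)E$ tends to $v$, so $v\in\liminf$ by Proposition \ref{obs2}; therefore $\lim_{t\to0^+}(1/t)E=V$. For the spherical statement the preceding Lemma already gives $V\cap\mathbb{S}^{n-1}=\limsup_{t\to0^+}[(1/t)E\cap\mathbb{S}^{n-1}]$, while for $u\in V\cap\mathbb{S}^{n-1}$ the norm-parametrized curve $x(\cdot)$ gives $x(t_\nu)/t_\nu=x(t_\nu)/\|x(t_\nu)\|\in\mathbb{S}^{n-1}$ converging to $u$, so $u\in\liminf_{t\to0^+}[(1/t)E\cap\mathbb{S}^{n-1}]$ and the two limits coincide.

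The step I expect to be the main obstacle is pinning the tangent of the selected curve exactly to $u$: a bare application of the Curve Selection Lemma yields a definable arc into $E$ but with an \emph{a priori} uncontrolled tangent direction. The device of passing through $\Lambda$ (equivalently, of controlling the angular behaviour and then invoking the monotonicity theorem for the norm reparametrization) is precisely what forces the realized direction to equal $u$, and hence $\varphi'(0)=v$; the rest is bookkeeping with the definitions of the Kuratowski limits.
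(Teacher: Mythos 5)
Your proof is correct, and all three assertions of the Proposition are covered. The engine is the same as in the paper --- the Curve Selection Lemma upgrades the $\limsup$ to a $\liminf$ and simultaneously produces the tangent curves --- but the technical setup differs. The paper applies the Curve Selection Lemma in the deformation space $X=\bigcup_{t>0}\{t\}\times(1/t)E$, using the identity $(\overline{X})_0=\limsup_{t\to 0^+}X_t=V$ from \cite{DD}; the selected arc can be written as $\gamma(t)=(t,\eta(t))$, so it is already parametrized by the dilation parameter, $\eta(t)\in(1/t)E$ certifies the $\liminf$ condition directly, and $\varphi(t)=t\eta(t)$ gives the curve description with $\varphi'(0)=\lim\eta(t)$ in one stroke. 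You instead select your arc in the graph $\Lambda$ of the radial projection $x\mapsto x/\|x\|$ on $E\setminus\{0\}$, reparametrize by the norm via the monotonicity theorem, and then rescale by $\|v\|$ to hit a prescribed $v\in V$; you prove the \emph{Moreover} part first and read off both limit identities from it. The two devices are essentially equivalent --- the paper's tacit claim that the CSL arc can be reparametrized by its first coordinate rests on exactly the monotonicity argument you spell out for the norm --- so your version is, if anything, slightly more explicit about that reparametrization step, at the cost of routing through the spherical lemma and treating $v=0$ and the modulus $\|v\|$ separately. Your handling of the degenerate cases, of the $v=0$ case via $t\mapsto t^2$, and of the final reparametrization onto $[0,1)$ by a definable bijection tangent to the identity are all sound.
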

\begin{proof}
If $V\subset\{0\}$, there is nothing to do. Assume that $0$ is an accumulation point of $E$ and consider
$$
X=\bigcup_{t>0}\{t\}\times (1/t)E=\{(t,x)\mid t>0, tx\in E\}
$$
which obviously is a definable set. We know from \cite{DD} that for the $t$-sections we have
$$
(\overline{X})_0=\limsup_{t\to 0^+} X_t=V.
$$
For any $(0,x)\in (\overline{X})_0$ we can use the Curve Selection Lemma (cf. $X_0=\varnothing$) finding a definable curve that can be written as $\gamma(t)=(t,\eta(t))$ and is $\mathscr{C}^1$. In particular, $t\eta(t)\in E$ and $\lim_{t\to 0^+}\eta(t)=x$. Then for the definable $\mathscr{C}^1$ curve $\varphi(t):=t\eta(t)$ we have $\varphi(t)\in E$ ($t>0$), $\varphi(0)=0$ and $$\varphi'(0)=\lim_{t\to 0^+}\frac{\varphi(t)}{t}=\lim_{t\to 0^+}\eta(t)=x.$$
This ends the proof of the first equality, for whenever we fix a neighbourhood $U\ni x$, we have $\eta(t)\in (1/t)E\cap U$ for all $t$ near zero.

To prove the second one we argue similarly. The set 
$$
X=\bigcup_{t>0}\{t\}\times (1/t)E\cap\mathbb{S}^{n-1}=\{(t,x)\mid t>0, ||x||=1, tx\in E\}
$$
is again definable and $\limsup_{t\to 0^+} X_t=(\overline{X})_0=V\cap \mathbb{S}^{n-1}$. As before $X_0=\varnothing$ and so the Curve Selection Lemma yields a curve $(t,\eta(t))\in \{t\}\times X_t$. Then $||\eta(t)||=1$ and $t\eta(t)\in E$ so that for any neighbourhood $U\ni x$ we have $\eta(t)\in U\cap (1/t)E\cap\mathbb{S}^{n-1}$ for all $t$ sufficiently near zero. This ends the proof.
\end{proof}

As a direct consequence we have that the dilatations of a definable set always form a definable, convergent family whose limit is the tangent cone, also when intersected with the sphere. Note that this can be expressed by writing
$$
\mathrm{dist}_{H}(E\cap\mathbb{S}(r), V\cap\mathbb{S}(r))=o(r)\> \textrm{at zero}.
$$

If $E$ is open and $f\colon E\to {\Rz}$ is a locally Lipschitz function, then by the Rademacher Theorem, it has a well defined (and locally bounded) gradient almost everywhere --- we will denote by $D_f$ this set of differentiability points. Following \cite{Cl} we consider the \textit{generalized gradient at} or \textit{subdifferential} that we prefer to call \textit{subgradient} $\partial f(x)$ at $x\in\mathbb{R}^n$ defined as the convex hull of the set of all possible limits $\lim\nabla f(x_\nu)$ when $x_\nu\to x$. The set $\partial f(x)$ is a nonempty convex compact set. 

As Clarke shows in \cite{Cl}, for a Lipschitz function $f$, we have $\partial f(x)=\{y\}$ iff $x\in D_f$ and $\nabla (f|_{D_f})$ is continuous at $x$; in that case $y=\nabla f(x)$.

\begin{thm}\label{definiowalne}
Let $E\subset{\Rz}^m$ be a definable set and $f\colon E\to{\Rz}^n$ a definable function. Then \begin{enumerate}
\item The multifunction $\tau\colon E\ni x\mapsto C_x(E)\in\mathscr{P}({\Rz}^m)$ is definable;
\item If $E$ is open and $f$ is locally Lipschitz, then $\partial\colon E\ni x\mapsto \partial f(x)\in\mathscr{P}({\Rz}^{mn})$ is definable.
\end{enumerate}
\end{thm}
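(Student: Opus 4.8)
The plan is to reduce each assertion to the definability of a set cut out by a first-order formula over the definable data, exploiting that an o-minimal structure is closed under finite Boolean operations and projections (equivalently, under first-order definability).

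For (1) I would describe the graph $\Gamma_\tau=\{(a,v)\in E\times{\Rz}^m\mid v\in C_a(E)\}$ directly through the upper-limit characterization recalled above. Membership is governed by
$$
v\in C_a(E)\ \Longleftrightarrow\ \forall\varepsilon>0\,\forall\delta>0\,\exists t\in(0,\delta)\,\exists x\in E\colon\ ||(x-a)/t-v||<\varepsilon .
$$
Since $E$ is definable, the predicate after the quantifiers defines a definable subset in the variables $(a,v,\varepsilon,\delta,t,x)$, and the alternating quantifiers over real parameters and over $x\in E$ are realized by projections and complements; hence $\Gamma_\tau$ is definable. Equivalently, one may form the definable set $\mathcal{X}=\{(a,t,v)\mid a\in E,\ t>0,\ a+tv\in E\}$ and take its \emph{fibrewise} closure in $(t,v)$ evaluated at $t=0$, which by the section-of-closure result of \cite{DD} equals $C_a(E)$ (note that the full closure of $\mathcal{X}$ would let $a$ vary and produce the generally larger limit of tangent cones, so the fibrewise version is essential).

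For (2) I would first record that the differentiability set $D_f=\{x\in E\mid f\ \text{differentiable at}\ x\}$ and the derivative are definable: $x\in D_f$ together with $L=\nabla f(x)\in{\Rz}^{mn}$ is described by the first-order condition
$$
\forall\varepsilon>0\,\exists\delta>0\,\forall y\in E\colon\ ||y-x||<\delta\ \Rightarrow\ ||f(y)-f(x)-L(y-x)||\le\varepsilon\,||y-x||,
$$
so $D_f$ is definable and $\nabla f\colon D_f\to{\Rz}^{mn}$ is a definable function. By Rademacher's theorem $D_f$ has full measure in the open set $E$, and local Lipschitzness keeps $\nabla f$ locally bounded. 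The limiting-gradient multifunction $\partial_L f(x):=\{\lim\nabla f(x_\nu)\mid D_f\ni x_\nu\to x\}$ is then the section at $x$ of the closure of the definable graph $J:=\Gamma_{\nabla f}\subset{\Rz}^m\times{\Rz}^{mn}$; that is, its graph equals $\overline{J}\cap(E\times{\Rz}^{mn})$ and is definable, density of $D_f$ and local boundedness ensuring that these closure points are exactly the admissible limit gradients.

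It remains to pass to $\partial f(x)=\mathrm{conv}\,\partial_L f(x)$. I would invoke Carathéodory's theorem in ${\Rz}^{mn}$: every point of the convex hull is a convex combination of at most $mn+1$ points, so
$$
w\in\partial f(x)\ \Longleftrightarrow\ \exists w_0,\dots,w_{mn}\in\partial_L f(x)\,\exists\lambda_0,\dots,\lambda_{mn}\ge 0\colon\ \sum_{i=0}^{mn}\lambda_i=1\ \wedge\ w=\sum_{i=0}^{mn}\lambda_i w_i,
$$
a first-order formula over the definable relation $w_i\in\partial_L f(x)$, whence $\Gamma_\partial$ is definable. The main obstacle I anticipate is precisely this last reduction: one must keep the convex-hull operation inside the structure, and Carathéodory is the decisive tool because it bounds the number of points, and hence the number of quantifiers, uniformly by the ambient dimension, which is exactly what preserves first-orderness. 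A secondary point requiring care is confirming that $\overline{J}$ recovers all Clarke limit gradients and nothing spurious, which rests on the local boundedness of $\nabla f$ furnished by the Lipschitz hypothesis and on the density of $D_f$ in $E$.
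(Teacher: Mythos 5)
Your proposal is correct and follows essentially the same route as the paper: part (1) is handled by the same first-order description of $\Gamma_\tau$ via the set $\{(x,t,y)\mid x\in E,\ t>0,\ ty+x\in E\}$, and part (2) by the definability of $D_f$ and of the derivative, the sections of the closure of its graph as the limit gradients, and Carath\'eodory's theorem to keep the convex hull first-order. The only cosmetic difference is that the paper cites the standard references for the definability of $D_f$ and $x\mapsto d_xf$ rather than writing out the $\varepsilon$--$\delta$ formula, which you do explicitly.
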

\begin{proof}
In order to prove (1) we use the definition of the tangent cone.%fact that %$C_x(E)=\limsup_{\varepsilon\to 0+}(1/\varepsilon)(E-x)$. 
From this we derive the following description by a first order formula of the graph of $\tau$:
$$
\Gamma_\tau=\{(x,v)\in{\Rz}^m\times{\Rz}^m\mid \forall \varepsilon>0, \forall r>0, \exists t\in (0,\varepsilon)\colon \exists y\in F_{(x,t)}, ||v-y||^2<r^2\}
$$
where $F_{(x,t)}$ denotes the $(x,t)$-section of the definable set
$$
F=\{(x,t,y)\mid x\in E, t>0, ty+x\in E\}.
$$

Property (2) requires the use of the Carath\'eodory Theorem. First, we note that the set 
$$
D_f=\{x\in E\mid \exists d_x f\},
$$
where $d_x f$ is the differential, is definable together with the derivative $D_f\ni x\mapsto d_x f\in{\Rz}^m$ (cf. \cite{C}, \cite{vdDM} as well as the last chapter of \cite{DS}). Let $\Gamma$ denote the closure of its graph. Then the $x$-section of $\Gamma$ is precisely
$$
(\Gamma)_x=\left\{\ell\in{\Rz}^{mn}\mid \exists (x_\nu)\subset D_f\colon \ell=\lim_{\nu\to+\infty} d_{x_\nu} f\right\}.
$$
Finally, by the Carath\'eodory Theorem (\footnote{Any point from a convex hull of a subset of ${\Rz}^n$ is a convex combination of at most $n+1$ points from this set.}), $\Gamma_\partial$ coincides with the set
$$
\left\{(x,y)\mid \exists \ell_0,\dots, \ell_{mn}\in(\Gamma)_x, \exists \lambda_0,\dots,\lambda_{mn}\geq 0\colon \sum_{i=0}^{mn}\lambda_i=1, y=\sum_{i=0}^{mn}\lambda_i\ell_i\right\}
$$
which is definable.
\end{proof}
This Theorem allows us to apply the preceding results to these functions. For any set locally closed $E\subset{\Rz}^n$ containing the origin write $E{[r]}:=E\cap\overline{\mathbb{B}}(0,r)$. In particular we obtain the following general counterpart of \cite{Ch} 2.8.6.
\begin{cor}
If $E\subset {\Rz}^m$ is locally closed and definable in a polynomially bounded o-minimal structure, or subanalytic, $0\in E$, but $E$ is not a cone at zero, then we have
$$
\mathrm{dist}_H(E[r],C_0(E)[r])=cr^\alpha+o(r^\alpha) \quad(r\to 0^+)$$
for some constants $C,\alpha>0$ with $r\mapsto r^\alpha$ definable.
\end{cor}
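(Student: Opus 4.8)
The plan is to compress the whole statement into the asymptotics of a single definable function of one real variable. Put $V:=C_0(E)$; this is a closed definable cone (for a fixed base point the tangent cone of a definable set is definable, as noted before Theorem \ref{definiowalne}), and consider
$$
h(r):=\mathrm{dist}_H(E[r],V[r]),\qquad r>0\ \text{small}.
$$
Both $E[r]$ and $V[r]$ are nonempty, since they contain the origin, and they are compact: $V[r]$ because $V$ is a closed cone, and $E[r]$ because local closedness of $E$ gives an $r_0>0$ with $E\cap\overline{\mathbb{B}}(0,r_0)$ closed, hence $E[r]=(E\cap\overline{\mathbb{B}}(0,r_0))\cap\overline{\mathbb{B}}(0,r)$ compact for $r\le r_0$. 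First I would establish that $h$ is definable. The assignments $r\mapsto E[r]$ and $r\mapsto V[r]$ are definable multifunctions of the parameter $r$ with compact values, their graphs being $\{(r,x)\mid x\in E,\ \|x\|\le r\}$ and $\{(r,x)\mid x\in V,\ \|x\|\le r\}$. Thus Theorem \ref{Hausdorff} applies and yields definability of $(r,r')\mapsto\mathrm{dist}_H(E[r],V[r'])$; restricting to the diagonal $r=r'$ makes $h$ definable.

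Next I would record two elementary observations. Since $E[r]$ and $V[r]$ both lie in $\overline{\mathbb{B}}(0,r)$ and both contain $0$, every point of either set is within distance $r$ of the origin, hence within distance $r$ of the other set; therefore $0\le h(r)\le r$, and in particular $h(r)\to 0$ as $r\to 0^+$. On the other hand, the hypothesis that $E$ is \emph{not} a cone at zero says precisely that the germs of $E$ and of $V$ at $0$ differ: if $h$ vanished on some $(0,\varepsilon)$ then $E[r]=V[r]$ there, so $E$ would coincide near $0$ with the cone $V$ and would itself be a cone at zero. Consequently $h$ is not the zero germ at $0^+$; equivalently, $h(r)>0$ for arbitrarily small $r$, using that $\mathrm{dist}_H$ vanishes only on equal compact sets.

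Everything is now a one-variable problem, where I would invoke the defining feature of polynomially bounded o-minimal structures (together with the Monotonicity Theorem, \cite{vdDM}): a definable germ $h$ at $0^+$ with $h(r)\to 0$ that is not the zero germ is asymptotic to a power of the variable, i.e. there are $c\neq 0$ and an exponent $\alpha$ in the field of exponents of the structure, so that $r\mapsto r^\alpha$ is definable, with $h(r)/r^\alpha\to c$, that is $h(r)=cr^\alpha+o(r^\alpha)$. Nonnegativity of $h$ forces $c>0$, and $h(r)\to 0$ forces $\alpha>0$, which is exactly the asserted expansion. In the subanalytic case I would first pass to the germ of $E$ at $0$, which is the germ of a globally subanalytic set (both $E[r]$ for small $r$ and $V$ depend only on this germ), and then work inside the polynomially bounded structure $\mathbb{R}_{\mathrm{an}}$, whose field of exponents is $\mathbb{Q}$; this gives the same conclusion with $\alpha\in\mathbb{Q}$.

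The only genuine work lies in the first paragraph: recognizing the parametrized family $r\mapsto E[r]$ as a compact-valued definable multifunction so that Theorem \ref{Hausdorff} applies, and verifying compactness through local closedness. The remainder is the standard asymptotic analysis of definable one-variable germs, and the main obstacle, to the extent there is one, is citing the power-monomial asymptotic in the polynomially bounded setting cleanly and carrying out the reduction to $\mathbb{R}_{\mathrm{an}}$ in the subanalytic case.
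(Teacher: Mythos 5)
Your proof is correct and follows essentially the same route as the paper: establish definability of $r\mapsto\mathrm{dist}_H(E[r],C_0(E)[r])$ via Theorem \ref{Hausdorff}, then apply the one-variable power asymptotics (Miller's theorem in the polynomially bounded case, the Puiseux expansion in the subanalytic case). The only minor differences are that you obtain continuity at $0$ from the elementary bound $h(r)\le r$ where the paper invokes the Local Conical Structure Theorem, and you spell out the (correct and worthwhile) check that the ``not a cone at zero'' hypothesis forces the leading coefficient to be nonzero.
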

\begin{proof}
By Theorems \ref{definiowalne} and \ref{Hausdorff} we know that $f\colon r\mapsto \mathrm{dist}_H(E[r],C_0(E)[r])$ is definable. Then $f$ is continuous on some interval $(0,\varepsilon)$. Of course, $f(0)=0$. The Local Conical Structure Theorem ensures that $f$ is continuous at zero and either \cite{M} (in a polynomially bounded o-minimal structure) or the Puiseux expansion as presented in \cite{DS} gives the result.
\end{proof}
\begin{rem}
Theorems \ref{definiowalne} and \ref{Kuratowski} can be applied to $$E\ni x\mapsto \mathrm{dist}_K(C_x(E), C_a(E))$$ where $a\in E$ is fixed which gives a rate of convergence that can be useful for instance in Whitney stratifications (as there is a nice continuity property of tangents along the strata). 
\end{rem}

\section{\L ojasiewicz Inequality for subanalytic Lipschitz functions}\label{LojasiewiczSection}

All the previous results give the possibility of considering generalized gradient-type inequalities as in the following theorem. In view of the great and well-founded popularity this kind of inequalities have met lately with, there is no need to motivate this section more in details. Suffice it to say that our approach is to some extent similar to that of Bolte, Daniilidis and Lewis from \cite{BDLew} or Ph\d{a}m from \cite{Ph}. In both cases the authors consider, however, the {\it limiting subdifferential} of subanalytic continuous functions, whereas we shall concentrate on {\it Clarke's subgradient} of a subanalytic Lipschitz function as in the last section. The use of Clarke's subdifferential is unusual for it seldom gives the property sought for and we have thus an extra assumption: we are working at a point that is strongly critical in the sense that Clarke's subgradient reduces to zero.

The theorem we are aiming at is the following result that should be compared to \cite{BDLewS2} Corollary 16. Our approach is straightforward, but requires the extra assumption mentioned above.
\begin{thm}\label{subgradient}
   
   Let $f: \left( \mathbb{R}^n, 0 \right) \to \left(\mathbb{R}, 0\right)$ be subanalytic and Lipschitz continuous such that for the Clarke subgradient we have $\partial f(0) = \left\lbrace 0 \right\rbrace$.
   Define $h(x):= \inf \left\lbrace ||l|| \colon \ l \in \partial f(x) \right\rbrace$.
   
   Then there exist $U$ a neighbourhood of $0$ and constants $c > 0, \theta \in \left(0, 1 \right)$ such that:
   
   \begin{enumerate}
      \item $\forall x \in U, 0 \in \partial f(x) \implies f(x) = 0$,
      \item $h(x) \ge c  |f(x)| ^ {\theta}, \ x \in U$.
   \end{enumerate}
\end{thm}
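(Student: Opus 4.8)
The plan is to read both assertions as statements about the subanalytic slope function $h$ and its zero set, reducing everything to the \L ojasiewicz machinery already at hand; the only genuinely new ingredient is a finite-length (talweg) argument forcing the exponent below $1$. Since $f$ is subanalytic we work in a polynomially bounded o-minimal structure, and by Theorem~\ref{definiowalne}(2) the Clarke subgradient $x\mapsto\partial f(x)$ is a definable multifunction with nonempty compact values. Exactly as in the proof of Theorem~\ref{odleglosc} the function $\delta(x,y)=d(y,\partial f(x))$ is definable, hence so is $h(x)=\delta(x,0)$; compactness of $\partial f(x)$ gives $h(x)=0\iff 0\in\partial f(x)$. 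Thus the Clarke-critical set $\Sigma:=h^{-1}(0)$ is subanalytic, and the hypothesis $\partial f(0)=\{0\}$ places us in the non-vacuous regime $0\in\Sigma$, $h(0)=0$ (otherwise $h(0)>0$ and (2) is trivially true near $0$).

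For (1) I would use the projection formula for Clarke subgradients of subanalytic functions (cf.~\cite{BDLewS2},\cite{Ph}): along a $\mathscr{C}^1$ arc $\gamma$ one has $(f\circ\gamma)'(t)=\langle\zeta,\gamma'(t)\rangle$ for \emph{every} $\zeta\in\partial f(\gamma(t))$. On $\Sigma$ one may take $\zeta=0$, so $f\circ\gamma$ is constant along any arc contained in $\Sigma$. Because $\Sigma$ is subanalytic it has finitely many connected components near $0$, each arcwise connected by subanalytic arcs, so $f$ is constant on every local component of $\Sigma$ and $f(\Sigma\cap U)$ is finite. As $0\in\Sigma$, $f(0)=0$ and $f$ is continuous, shrinking $U$ yields $f(\Sigma\cap U)\cap(-\varepsilon,\varepsilon)=\{0\}$ and $|f|<\varepsilon$ on $U$; hence $0\in\partial f(x)\Rightarrow f(x)=0$ for $x\in U$, i.e. $h^{-1}(0)\cap U\subset f^{-1}(0)$.

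For (2) I would obtain the inequality by a talweg analysis, which also circumvents the fact that $h$ is only lower semicontinuous (being built from the upper semicontinuous $\partial f$), so that the continuous two-function form of Theorem~\ref{vdDM} is not directly applicable. Put $m(s)=\min\{h(x)\mid x\in U,\ f(x)=s\}$; the minimum is attained since $\{f=s\}\cap\overline U$ is compact and $h$ is lsc, $m$ is a definable function of $s$, and by the inclusion from (1) it is positive for $0<|s|$ small. Polynomial boundedness together with the monotonicity theorem and Puiseux give $m(s)\sim a\,s^{\beta}$ as $s\to0^{+}$ for some finite $\beta\ge0$. Select by definable choice a subanalytic $\mathscr{C}^1$ arc $\gamma(s)$ with $f(\gamma(s))=s$ and $h(\gamma(s))\le 2m(s)$; differentiating $f(\gamma(s))=s$ and applying the projection formula to a minimal-norm element $\zeta_0(s)\in\partial f(\gamma(s))$, for which $\|\zeta_0(s)\|=h(\gamma(s))$, gives $1=\langle\zeta_0(s),\gamma'(s)\rangle\le h(\gamma(s))\,\|\gamma'(s)\|$. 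Since $\gamma$ is a bounded subanalytic arc it has finite length, whence
$$
\infty>\int_0^{s_0}\|\gamma'(s)\|\,ds\ge\int_0^{s_0}\frac{ds}{h(\gamma(s))}\ge\int_0^{s_0}\frac{ds}{b\,s^{\beta}},
$$
which forces $\beta<1$. Taking $\theta=\beta$ when $\beta>0$ (and any $\theta\in(0,1)$ when $\beta=0$, where $h$ is bounded below on small level sets) and using $h(x)\ge m(f(x))\ge a'|f(x)|^{\beta}$ on each sign of $f$ completes (2).

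The hard part is the exponent bound $\theta<1$. It rests on two nontrivial facts: the projection formula for Clarke subgradients along $\mathscr{C}^1$ arcs of subanalytic functions, and the finite length of bounded subanalytic curves; their combination is precisely the talweg mechanism of \cite{BDLewS2},\cite{Ph}. Everything else is bookkeeping, and the hypothesis $\partial f(0)=\{0\}$ enters only to guarantee $0\in\Sigma$ and $h(0)=0$.
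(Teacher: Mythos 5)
Your proposal is essentially correct, and for the crucial part --- the bound $\theta<1$ in (2) --- it takes a genuinely different route from the paper. The paper fixes $\varphi(t)=\min\{h(x):x\in B,\ |f(x)|=t\}$, expands $\varphi(t)=ct^\theta+o(t^\theta)$, and then rules out $\theta\geq 1$ by a double contradiction: it applies the Curve Selection Lemma to the argmin set $A=\{x\in B\setminus S:h(x)=\varphi(|f(x)|)\}$, and along the resulting arc combines the one-variable asymptotic gradient inequality (Lemma \ref{gradAsympt}) for $\psi=f\circ\gamma$ with the chain rule $\psi'=\langle\nabla_Rf\circ\gamma,\gamma'\rangle$ and Lemma \ref{8.10}. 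You instead run the talweg mechanism of \cite{BDLM}: a definable near-minimizing selection $s\mapsto\gamma(s)$ on the level sets, the inequality $1=(f\circ\gamma)'(s)\le h(\gamma(s))\,\|\gamma'(s)\|$, and the finite length of bounded definable arcs, which directly forces $\int_0^{s_0}s^{-\beta}\,ds<\infty$ and hence $\beta<1$. This is a positive (non-contradiction) argument and dispenses with Lemma \ref{gradAsympt} and the Puiseux analysis of $\varphi'$, at the price of invoking definable choice and the finite-length property of definable curves; both proofs ultimately rest on the same stratification input ($\|\nabla_Rf\|\le\|x^\star\|$ for $x^\star\in\partial^\circ f$ and $\partial f\subset\partial^\circ f$). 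Your part (1) also differs slightly: the paper applies Curve Selection to $\{0\in\partial f,\ f\neq 0\}$ and quotes Theorem \ref{connected}, whereas you re-derive the constancy of $f$ on components of the critical set from the projection formula and conclude via local finiteness of the components; both work.

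Two points deserve tightening. First, the ``projection formula'' is not valid for an arbitrary $\mathscr{C}^1$ arc: the correct statement is $(f\circ\gamma)'(t)=\langle\nabla_Rf(\gamma(t)),\gamma'(t)\rangle$ when $\gamma$ lies in a single stratum of a nonvertical Whitney $\mathscr{C}^1$ (a)-stratification, combined with $\|\nabla_Rf(x)\|\le\|\zeta\|$ for all $\zeta\in\partial f(x)$ (Lemma \ref{8.10}); this suffices for every use you make of it, but you must first shrink your definable arcs so that they sit in one stratum (and, in part (1), subdivide the connecting arcs and glue by continuity of $f\circ\gamma$). Second, the selection $\gamma$ must be refined on a small interval $(0,s_0)$ to be $\mathscr{C}^1$ and contained in one stratum before differentiating $f(\gamma(s))=s$, and the minimum defining $m(s)$ should be taken over a compact ball $\overline{B}$ (with the nonemptiness of the level sets for small $s$ of the appropriate sign, and the trivial cases $f\equiv 0$ or $\beta=0$, handled as in the paper). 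With these routine repairs your argument is complete.
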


 Before we prove this theorem we shall need some auxiliary results. But first let us consider one basic example (showing in particular that the set of differentiability points  $D_f$ need not be open).
\begin{ex}
Let $f(x,y)=|x|y$. This is a locally Lipschitz, semi-algebraic function whose non-differentiability points ${\Rz}^2\setminus D_f$ coincide with the $y$-axis without the origin. In particular we see that $D_f$ is not closed. For the subgradient we obtain
$$
\partial f(x,y)=\begin{cases}\{((\mathrm{sgn} x)y,|x|)\}, &\textrm{if}\ x\neq 0,\\
\{(0,0)\}, &\textrm{if}\ (x,y)=(0,0),\\
[-|y|,|y|]\times \{0\}, &\textrm{if}\ x=0, y\neq 0.  \end{cases}
$$
Note that $\partial f$ is continuous at zero in the sense of Kuratowski (actually, this is not fortuitous, cf. Lemma \ref{cglsc}). Here $D_f={\Rz}^2\setminus\{x=0, y\neq 0\}$. 

A straightforward computation shows that for this example the subgradient inequality (2) holds at zero with the best exponent $\theta=1/2$.
\end{ex}
Recall that a multifunction $F\colon {\Rz}^m\to\mathscr{P}({\Rz}^n)$ is \textit{outer semi-continuous} if at any point $x_0\in\mathrm{dom} F$ there is 
$$
\limsup_{x\to x_0} F(x)\subset F(x_0).
$$
As observed in \cite{DD}, this is automatically satisfied, if $\Gamma_F$ is closed. By \cite{Cl}, the subgradient is outer semi-continuous.

We need one more notion.
\begin{dfn}
The multifunction $F$ is said to be \textit{locally bounded}, if for any $x_0\in \mathrm{dom} F$ there is a neighbourhood $U$ of $x_0$ and  a constant $C>0$ such that for all $x\in U\cap\mathrm{dom} F$ and $y\in F(x)$, $||y||\leq C$. 
\end{dfn}
\begin{rem}
If in addition $\Gamma_F$ is closed, then this is equivalent to saying that the natural projection ${\Rz}^m\times{\Rz}^n\to {\Rz}^m$ is proper when restricted to $\Gamma_F$.
\end{rem}

\begin{lem}\label{cglsc}
Assume that $F\colon {\Rz}^m\to\mathscr{P}({\Rz}^m)$ is a locally bounded, outer semi-continuous, closed multifunction. Then for any point $x_0\in\mathrm{dom} F$ such that $\#F(x_0)=1$ we have
$$
F(x_0)=\lim_{x\to x_0} F(x)
$$
i.e. $x_0$ is a continuity point of $F$.
\end{lem}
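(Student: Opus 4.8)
The plan is to unwind the definition of a continuity point and reduce the statement to a single lower-semicontinuity inclusion, which I will then establish by a compactness argument powered by local boundedness. Write $F(x_0)=\{y_0\}$. If $x_0$ happens to be isolated in $\mathrm{dom}F$, then $F$ is trivially continuous there (for $x\in\mathrm{dom}F$ near $x_0$ one has $x=x_0$), so I may assume $x_0$ is an accumulation point of $\mathrm{dom}F$ and the Kuratowski limit is meaningful. By definition $F(x)\stackrel{K}{\longrightarrow}F(x_0)$ amounts to the two inclusions $\limsup_{x\to x_0}F(x)\subset F(x_0)\subset\liminf_{x\to x_0}F(x)$. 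The first is given to us for free: since $\Gamma_F$ is closed (equivalently, $F$ is outer semi-continuous), we already have $\limsup_{x\to x_0}F(x)\subset F(x_0)=\{y_0\}$, as recalled before the lemma. So the entire content of the proof is to verify the reverse inclusion $\{y_0\}\subset\liminf_{x\to x_0}F(x)$.

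For this I will use the sequential characterization of the lower limit from Proposition \ref{obs2}: I must show that for \emph{every} sequence $\mathrm{dom}F\setminus\{x_0\}\ni x_\nu\to x_0$ there exist $y_\nu\in F(x_\nu)$ with $y_\nu\to y_0$. First I would simply \emph{choose} any $y_\nu\in F(x_\nu)$ (possible, as $x_\nu\in\mathrm{dom}F$) and then prove that this arbitrary selection already converges to $y_0$. Local boundedness is what makes this work: there is a neighbourhood $U$ of $x_0$ and a constant $C$ with $\|y\|\le C$ for all $x\in U\cap\mathrm{dom}F$ and $y\in F(x)$, so for large $\nu$ the points $y_\nu$ lie in the closed ball of radius $C$. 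Now suppose $y_\nu\not\to y_0$. Then some subsequence satisfies $\|y_{\nu_k}-y_0\|\ge\delta>0$; being bounded, it has a convergent sub-subsequence $y_{\nu_{k_j}}\to y^{*}$ with $\|y^{*}-y_0\|\ge\delta$, hence $y^{*}\neq y_0$. But along $x_{\nu_{k_j}}\to x_0$ with $F(x_{\nu_{k_j}})\ni y_{\nu_{k_j}}\to y^{*}$, Proposition \ref{obs2} places $y^{*}\in\limsup_{x\to x_0}F(x)\subset\{y_0\}$, a contradiction. Therefore $y_\nu\to y_0$, which gives $y_0\in\liminf_{x\to x_0}F(x)$ and completes the argument.

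The step I expect to be the crux is precisely this compactness bootstrap, and it is worth noting exactly where each hypothesis enters. Local boundedness is essential: without it a selection $y_\nu$ could escape to infinity, producing no convergent subsequence and no contradiction with outer semi-continuity (the graph could stay closed while lower semicontinuity fails). The assumption $\#F(x_0)=1$ is equally essential and is what lets an \emph{arbitrary} bounded selection be pinned to a single limit; if $F(x_0)$ contained several points, distinct subsequences of the same selection could accumulate at distinct elements of $F(x_0)$, and convergence of $F(x)$ to the singleton-target would genuinely break down. Thus the proof is conceptually a matter of upgrading outer semi-continuity to full Kuratowski continuity at singleton values, the upgrade being furnished by boundedness.
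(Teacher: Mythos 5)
Your proof is correct and follows essentially the same route as the paper's: both reduce the claim to the single inclusion $\{y_0\}\subset\liminf_{x\to x_0}F(x)$, take an arbitrary selection $y_\nu\in F(x_\nu)$ along a sequence $x_\nu\to x_0$, use local boundedness to extract convergent subsequences, and pin every subsequential limit to $y_0$ via outer semi-continuity. The only cosmetic difference is that the paper phrases the final step as the standard ``every subsequence has a sub-subsequence converging to $y_0$'' argument while you argue by contradiction with a $\delta$-separated subsequence; these are the same argument.
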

\begin{proof}
We have only to check that $F(x_0)\subset \liminf_{x\to x_0} F(x)$ and we shall use Proposition \ref{obs}. Denote by $y_0$ the unique point of $F(x_0)$ and take any sequence $x_\nu\to x_0$ together with points $y_\nu\in F(x_\nu)$. Since $F$ is locally bounded, we may assume that the sequence $\{(x_\nu,y_\nu)\}$ is bounded. Now, from any subsequence $\{(x_{\nu_k},y_{\nu_k})\}$ we can extract a subsubsequence converging to some limit $(x(\nu_k),y(\nu_k))$ for which we necessarily have $x(\nu_k)=x_0$. This implies $y(\nu_k)\in \limsup_{x\to x_0} F(x)$ and so by assumptions, $y(\nu_k)=y_0$. We conclude that $y_\nu\to y_0$ which ends the proof.
\end{proof}
Of course, local boundedness in the lemma above is important:
\begin{ex}
Let $F$ be the function $F(x)=0$ for $x\geq 0$ and $F(x)=1/x$ for $x<0$. Then since $\Gamma_F$ is closed, $F$ is outer semi-continous as a multifunction, but it is not bounded near zero and indeed not continuous at this point.
\end{ex}

Let us note the following asymptotic gradient inequality in one variable:
\begin{lem}\label{gradAsympt}
If $\varphi\colon [0,1]\to{\Rz}$ is a continuous subanalytic function and $\varphi^{-1}(0)=\{0\}$, then there exist constants $c>0$ and $\theta\in (0,1)$ such that in some interval $(0,\varepsilon)$, $\varphi$ is $\mathscr{C}^1$ and
$$
|\varphi'(t)|\geq c|\varphi(t)|^\theta,\quad 0<t\ll 1.
$$
\end{lem}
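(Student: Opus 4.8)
The plan is to reduce this one-variable statement to the classical Łojasiewicz gradient inequality for subanalytic functions via the monotonicity properties that subanalytic functions enjoy near an isolated point. First I would use the fact that $\varphi$ is subanalytic and continuous on $[0,1]$ with $\varphi^{-1}(0)=\{0\}$; by the cell decomposition / monotonicity theorem for o-minimal (in particular subanalytic) functions, there is some $\varepsilon>0$ such that on $(0,\varepsilon)$ the function $\varphi$ is $\mathscr{C}^1$, has no zeros, and is strictly monotone (so $\varphi'$ has constant sign there and, since $\varphi(0)=0$ is the only zero, $\varphi$ does not vanish on $(0,\varepsilon)$). This already secures the $\mathscr{C}^1$ regularity and the nonvanishing of $\varphi$ claimed in the statement.

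Next I would invoke the classical Łojasiewicz gradient inequality in the subanalytic category: for a subanalytic function that is $\mathscr{C}^1$ near a point where it vanishes, the gradient is bounded below by a power of the function value. In one variable the gradient is just $\varphi'$, so the inequality reads $|\varphi'(t)|\geq c\,|\varphi(t)|^{\theta}$ for $t$ near the zero $t=0$, with some $c>0$ and $\theta\in[0,1)$. Because $\varphi$ is continuous and subanalytic with an isolated zero, this is exactly the setting in which the gradient inequality holds. Shrinking $\varepsilon$ if necessary, I obtain the desired estimate on $(0,\varepsilon)$.

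Alternatively, and perhaps more self-containedly within the spirit of this paper, I would argue directly using the Puiseux expansion of subanalytic functions of one variable (as referenced in \cite{DS}). Since $\varphi$ is subanalytic, continuous, and $\mathscr{C}^1$ near $0$ with $\varphi(0)=0$, it admits near $t=0^+$ a convergent expansion $\varphi(t)=a\,t^{\rho}+o(t^{\rho})$ with $a\neq 0$ and some rational exponent $\rho>0$; differentiating term by term gives $\varphi'(t)=a\rho\,t^{\rho-1}+o(t^{\rho-1})$. Comparing $|\varphi'(t)|\sim |a|\rho\,t^{\rho-1}$ with $|\varphi(t)|^{\theta}\sim |a|^{\theta}t^{\rho\theta}$, I need $\rho-1\leq \rho\theta$, that is $\theta\geq 1-1/\rho$; choosing any $\theta\in(\max\{0,1-1/\rho\},1)$ makes the right-hand exponent strictly exceed the left, so for small $t>0$ the inequality holds with a suitable constant $c>0$. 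The case $\rho\geq 1$ gives $1-1/\rho\in[0,1)$, and if $\rho<1$ one simply takes any $\theta\in(0,1)$.

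The main obstacle is essentially bookkeeping rather than conceptual: one must ensure that the chosen $\theta$ genuinely lies in the open interval $(0,1)$ and that the constant $c$ can be taken uniform on a one-sided neighbourhood $(0,\varepsilon)$, which requires controlling the $o(\cdot)$ remainder terms in the Puiseux expansion (or, in the gradient-inequality route, keeping track of the admissible exponent). The only subtlety worth a remark is the borderline exponent: the Łojasiewicz exponent $\theta$ produced by the classical theorem could a priori equal $0$ (when $\varphi$ behaves linearly, $\rho=1$), but since the statement only demands $\theta\in(0,1)$, I may freely enlarge $\theta$ toward $1$, as enlarging the exponent only weakens the right-hand side $|\varphi(t)|^{\theta}$ near the zero where $|\varphi(t)|<1$; thus any admissible exponent can be pushed into the open interval, and the proof concludes.
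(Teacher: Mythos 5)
Your second (Puiseux-expansion) argument is essentially the paper's own proof: both expand $\varphi$ near $0^+$ as a convergent Puiseux series, differentiate term by term, and compare the leading exponents of $\varphi'$ and $\varphi^\theta$ to see that any $\theta\in\bigl(\max\{0,1-1/\rho\},1\bigr)$ works. The only caveat is that your first route --- a direct appeal to the ``classical'' gradient inequality --- is shakier as stated, since $\varphi$ is merely subanalytic and need not be differentiable (or analytic) at the zero $t=0$ itself; but as you also supply the self-contained Puiseux argument, this does not affect correctness.
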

\begin{proof}
Subanalycity implies that $\varphi$ is $\mathscr{C}^1$ on $(0,\varepsilon)$. Of course, from the Curve Selection Lemma we can also write $\varphi(t)=at^\alpha+o(t^\alpha)$ for $0\leq t\ll 1$. From the assumptions it follows that $c\neq 0$, $\alpha>0$. Actually, we are dealing here with the simplest case of a Puiseux expansion as noted in \cite{BochnakRisler}: for some integer $q>0$, $\varphi(t^q)$ has an analytic continuation through zero, hence near zero $\varphi$ is the sum of a convergent Puiseux series 
$$
\varphi(t)=\sum_{\nu\geq \nu_0} a_\nu t^{\nu/q},\quad 0\leq t\ll 1.
$$
with $a_{\nu_0}\neq 0$ and $\nu_0\geq 1$ (here $\alpha=\nu_0/q$). It follows easily that the derivative $\varphi'(t)$ (that exists at least for $0<t\ll 1$) admits an asymptotic Puiseux expansion obtained by differentiating the expansion of $\varphi$ term by term:
$$
\varphi'(t)=\sum_{\nu\geq \nu_0} a_\nu\frac{\nu}{q} t^{(\nu-q)/q},\quad 0< t\ll 1.
$$
In particular, only finitely many exponents $(\nu_0-q+k)/q$, $k=0,1,2,\dots$ are negative. It follows that $\varphi'(t)$ admits a limit $\ell=\lim_{t\to 0^+} \varphi'(t)$ that is either finite, or infinite. As $\lim_{t\to 0^+} \varphi(t)=\varphi(0)=0$ we see that the Lemma obviously holds true, if $\ell$ is finite and non-zero (this happens when $\alpha=1$), or infinite (when $\alpha\in (0,1)$) --- any $\theta$ works (\footnote{The point is that whenever $\alpha\in (0,1]$, we have actually $\varphi'(t)=c_1t^{-\beta_1}+\ldots+c_{k-1}t^{-\beta_{k-1}}+c_k t^\gamma+o(t^\delta)$ where $\beta_1>\ldots>\beta_{k-1}>\gamma\geq 0$ and $\delta>0$. It is clear that the limit when $t\to 0^+$ is $\mathrm{sgn} (c_{k-1})\infty$.}). The only case worth dealing with is when $\ell=0$ which means in particular that $\nu_0>q$ (i.e. $\alpha>1$). But then, if we try to compute for some $\theta>0$ the limit
$$
\lim_{t\to 0^+} \frac{|\varphi'(t)|}{|\varphi(t)|^\theta},
$$
 and obtain either $+\infty$ or a positive number, a simple computation shows that the largest exponent we may take is $\theta=(\nu_0-q)/\nu_0$. This $\theta$ belongs to $(0,1)$ which ends the proof.
\end{proof}

The following results of Bolte, Daniilidis, Lewis and Shiota are crucial in proving Theorem \ref{subgradient}.

\begin{thm}[\cite{BDLewS1}, Theorem 7]\label{connected}
Let $U$ be a nonempty subset of $\mathbb{R}^n$ and $f \colon U \to \mathbb{R}$ a locally Lipschitz subanalytic mapping. Let $S$ denote the set of Clarke critical points of $f$, that is, $$S: = \left\lbrace x \in U \colon \ \partial^\circ f(x) \ni 0 \right\rbrace,$$where $\partial^\circ f(x)$ is the Clarke subdifferential at $x$. Then $f$ is constant on each connected component of $S$. 
\end{thm}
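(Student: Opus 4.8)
The plan is to reduce the statement to a one-dimensional mean value argument carried out along subanalytic paths lying inside $S$. First I would record that $S$ is subanalytic: by Theorem \ref{definiowalne}(2) the Clarke subgradient $\partial^\circ f$ (the object denoted $\partial f$ there) is a definable, here subanalytic, multifunction, and it has closed graph by the outer semicontinuity of the subgradient recalled above; hence $S = \{x \in U : (x,0) \in \Gamma_{\partial^\circ f}\}$ is subanalytic as a section. Consequently each connected component $C$ of $S$ is again subanalytic, and since subanalytic sets are triangulable, hence locally path-connected, $C$ is path-connected, and any two of its points can be joined by a path $\gamma \colon [0,1] \to C$ that is subanalytic and piecewise $\mathscr{C}^1$. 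It thus suffices to show that $f \circ \gamma$ is constant for every such path.

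The key geometric input is the \emph{projection formula} for subanalytic Lipschitz functions. I would invoke a $\mathscr{C}^1$ subanalytic stratification $U = \bigsqcup_i X_i$ adapted to $f$, so that each restriction $f|_{X_i}$ is of class $\mathscr{C}^1$ and, for every $x \in X_i$ and every $\xi \in \partial^\circ f(x)$, the orthogonal projection of $\xi$ onto the tangent space $T_x X_i$ equals $\nabla(f|_{X_i})(x)$. Feeding $\xi = 0$ into this formula at a point $x \in S \cap X_i$ yields $\nabla(f|_{X_i})(x) = 0$; that is, every point of $S$ is a genuine critical point of the smooth restriction of $f$ to the stratum through it.

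With this in hand the mean value step is immediate. I would refine the connecting path to be compatible with the stratification, i.e. there is a subdivision $0 = t_0 < \dots < t_N = 1$ such that on each open subinterval $\gamma$ lies in a single stratum $X_{i(j)}$ and is $\mathscr{C}^1$ there with $\gamma'(t) \in T_{\gamma(t)} X_{i(j)}$. On that subinterval the chain rule gives $(f \circ \gamma)'(t) = \langle \nabla(f|_{X_{i(j)}})(\gamma(t)), \gamma'(t)\rangle = 0$, because the stratum-gradient vanishes along $S$. Hence $f \circ \gamma$ is constant on the closure of each piece, and since $f \circ \gamma$ is continuous ($f$ being continuous and $\gamma$ a path), it is constant on all of $[0,1]$. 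Therefore $f$ takes the same value at the two endpoints, and as these were arbitrary points of $C$, $f$ is constant on $C$.

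The main obstacle is not the mean value argument, which is routine, but the two stratification ingredients on which it rests: the existence of a $\mathscr{C}^1$ subanalytic stratification on which $f$ is smooth and, above all, the projection formula relating $\partial^\circ f$ to the stratified gradient. This is the genuinely hard, function-theoretic part, and it is exactly where subanalyticity, as opposed to mere Lipschitz continuity, enters; I would establish it by Whitney stratifying the graph of $f$ together with a curve-selection argument controlling the limiting gradients $\lim \nabla f(x_\nu)$ that define $\partial^\circ f$. One also has to ensure the connecting path can be chosen compatible with the stratification, which follows from triangulating $S$ relative to the strata. An alternative route, bypassing the projection formula, would be to prove a subanalytic Morse--Sard theorem showing that the set of Clarke critical values $f(S)$ is finite; then $f(C)$, a connected subset of a finite set, must reduce to a single point. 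I expect either route to concentrate all the difficulty in the same stratification-theoretic toolbox.
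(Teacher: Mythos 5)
You should first note that the paper contains no proof of this statement: Theorem \ref{connected} is imported verbatim from \cite{BDLewS1} (Theorem 7) and used as a black box in the proof of Theorem \ref{subgradient}, so there is no internal argument to compare yours against. What you have reconstructed is, in essence, the stratification proof of Bolte--Daniilidis--Lewis--Shiota themselves, and its skeleton is sound: $S$ is closed in $U$ (outer semicontinuity of $\partial^\circ f$) and subanalytic (the section $\{x \mid (x,0)\in\Gamma_{\partial^\circ f}\}$, with subanalyticity of the subgradient as in Theorem \ref{definiowalne}(2), applied locally after reducing to a globally subanalytic restriction, exactly as the paper does at the start of the proof of Theorem \ref{subgradient}); components of a subanalytic set are subanalytic and can be joined by piecewise $\mathscr{C}^1$ subanalytic arcs; the sets $\gamma^{-1}(\mathcal{X}_i)$ are finite unions of points and intervals, giving the compatible subdivision; and the stratified chain rule plus vanishing of the stratum gradient on $S$ plus continuity of $f\circ\gamma$ finish the argument. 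One simplification you missed: you do not need the full projection formula, which you rightly flag as the hard function-theoretic input. The weaker norm inequality, already quoted in this paper as Lemma \ref{8.10} (\cite{BDLewS2}, Corollary 5), applied with $x^\star = 0 \in \partial^\circ f(x)$, immediately forces $\nabla_R f(x) = 0$ at every Clarke critical point, and the nonverticality hypothesis is automatic for a Whitney $\mathscr{C}^1$ (a)-stratification of the graph of a locally Lipschitz $f$, as remarked in Section \ref{LojasiewiczSection}. So the crux can be outsourced to exactly the same citation the paper already relies on, and your proof then closes without any new stratification theory.

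One caveat on your proposed alternative route. Deducing constancy from a subanalytic Morse--Sard theorem is logically valid ($f(C)$ is connected and contained in a finite set, hence a singleton), but it inverts the actual order of deduction in \cite{BDLewS1}: there the finiteness of the set of Clarke critical values is itself \emph{derived} from this constancy theorem together with the local finiteness of the connected components of the subanalytic set $S$. So that route would require an independent proof of the Sard-type statement, and as far as the literature the paper cites is concerned, no such independent proof is on offer; treat it as a heuristic, not a fallback.
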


%\begin{lem} Curve selection theorem (Bruhat, Cartan, Wallace, Hironaka)
%\label{curve}
%Let $E \subset \mathbb{R}^n$ be semianalytic and $a \in \overline{E} \setminus E$.
%Then there exists an analytic curve $\gamma : (-1, 1) \to  \mathbb{R}^n$ such that $\gamma(0) = a, \ \gamma((0, 1)) \subset E$, $\gamma$ is a %homeomorphism onto its image and an a semianalytic arc of class $\mathcal{C}^1$.
%
%\end{lem}

\begin{rem}
%\normalfont 
In the paper \cite{BDLewS1} the authors use a slightly different definition of the Clarke subgradient calling it Clarke's subdifferential; there are two intermediary notions: that of the {\it Fr\'echet subdifferential} $\widehat{\partial} f$ and the {\it limiting subdifferential} denoted there $\partial f$ and that we will denote by $\partial'f$ (to disinguish it from the previously introduced notation for the Clarke subgradient). 
Namely, the Clarke subdifferential of $f$ at $x$, denoted by $\partial^\circ f(x)$, is defined there as the closed convex hull of the limiting sudifferential, i.e. of the set of points $x^{\star} \in \mathbb{R}^n$ for which there exist sequences $x_\nu \to x$ and $x_\nu^{\star} \to x^{\star}$ such that  $x_\nu^\star\in\widehat{\partial} f(x_\nu)$, which means that $$\liminf_{y \to x_\nu, y \neq x_\nu} \frac{f(y) - f(x) - \langle x_\nu^{\star}, y - x_\nu \rangle}{\Vert y - x_\nu \Vert} \ge 0.$$
According to Clarke's original definition, the subgradient is the convex hull of all points $x^*\in{\Rz}^n$ for which there exists a sequence $D_f\ni x_\nu\to x$ such that $\nabla f(x_\nu)\to x^*$. We do not even need to know that in fact both these definitions are equivalent, for in our special case of a subanalytic Lipschitz function germ $f$ we will just need the inclusion of the Clarke subgradient $\partial f(x)$ in the Clarke subdifferential $\partial^\circ f(x)$. %We have the following proposition (standard in variatonal analysis):

%\begin{prop}
%For a subanalytic locally Lipschitz function $f\colon U\to {\Rz}$ where $U\subset{\Rz}^n$ is open, and for any $x\in U$, $\partial f(x)\subset\partial^\circ f(x)$.
%\end{prop}
%\begin{proof}
Indeed, by \cite{Cl2} Theorem 2.5.1, in the definition of Clarke's subgradient we may throw away from $D_f$ any set of Lebesgue measure zero without affecting the result. As $f$ is subanalytic, it is $\mathscr{C}^1$ apart from a nowheredense subanalytic set $Z$, hence of measure zero. We may assume $Z$ to be closed. But then, when $x^*=\lim_{\nu\to+\infty} \nabla f(x_\nu)$ where $D_f\setminus Z\ni x_\nu\to x$, we know that $f$ is of class $\mathscr{C}^1$ around any point $x_\nu$. This implies (cf. \cite{BDLewS2} Remark 2) that the limiting and Fr\'echet subdifferentials at $x_\nu$ coincide and are equal to $\{\nabla f(x_\nu)\}$. Then by taking $x_\nu^\star:=\nabla f(x_\nu)$ we see that $x^*\in\partial' f(x)$ whence the inclusion sought for.
%\end{proof}

%Here we use another definition of a subgradient - the one introduced by Clarke in \cite{Clarke} : $p \in \partial f (x)$ if, for all $v \in \mathbb{R}^n \ : f(x + v)-f(x) \ge \langle v, p \rangle$.
%
%However, $\partial f (x) \subset \partial^o f (x)$ for all $x \in  \mathbb{R}^n$ as $$ s \in \partial f (x) \implies \forall v:  \langle v, s \rangle \le f(x + v) - f(x) \implies \frac{f(x + v) - f(x) -  \langle v, s \rangle}{\Vert x + v - x \Vert} \ge 0.$$
\end{rem}

Following \cite{BDLewS2}, a stratification $\lbrace \mathcal{S}_i \rbrace_{i \in I}$ of the graph $\Gamma_f$ that is a $\mathscr{C}^1$ stratification satisfying Whitney's property (a) (se e.g. \cite{DS} for these notions) is said to be \textit{nonvertical}, if $$\forall i \in I,\ \forall u \in \mathcal{S}_i, \ \ e_{n+1} = (0, 0, \dots , 1) \not \in T_u \mathcal{S}_i.$$

If $f$ is locally Lipschitz continuous, then it is easy to check that any Whitney $\mathscr{C}^1$ (a)-stratification of  $\Gamma_f$ is nonvertical. This is the case we will be dealing with in a neighbourhood of the origin in ${\Rz}^n$.

As observed in \cite{BDLewS2}, if we project the strata onto the domain of $f$, we obtain a Whitney stratification $\mathcal{X}=\{\mathcal{X}_i\}_{i\in I}$ of the domain and $f$ is $\mathscr{C}^1$ on each stratum $\mathcal{X}_i$. Then we denote by $\nabla_R f(x)$ the gradient (with respect to the inherited Riemann structure) of $f|_{\mathcal{X}_i}$ at $x$ where $\mathcal{X}_i$ is the stratum containing $x$. 

\begin{lem}[\cite{BDLewS2}, Corollary 5]\label{8.10}
Assume that $f\colon{\Rz}^n\to{\Rz}\cup\{+\infty\}$ is lower semi-continuous and admits a nonvertical Whitney stratification. Then $\forall x \in \mathrm{dom} \partial^\circ f$, we have $$ \Vert\nabla_R f (x) \Vert \le \Vert x^{\star} \Vert, \ \ x^{\star} \in  \partial^\circ f(x).$$
\end{lem}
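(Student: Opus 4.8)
The plan is to prove the stronger \emph{projection formula}
$$
P_{T_x\mathcal{X}}(x^\star)=\nabla_R f(x),\qquad x^\star\in\partial^\circ f(x),
$$
where $\mathcal{X}$ is the stratum of the projected stratification $\{\mathcal{X}_i\}$ containing $x$ and $P_V$ denotes the orthogonal projection of ${\Rz}^n$ onto a linear subspace $V$. The Lemma is then immediate, since orthogonal projection is non-expansive: $\Vert\nabla_R f(x)\Vert=\Vert P_{T_x\mathcal{X}}(x^\star)\Vert\le\Vert x^\star\Vert$. Here nonverticality is exactly what gives the formula meaning: writing $u=(x,f(x))$ and letting $\mathcal{S}$ be the stratum of $\Gamma_f$ lying over $\mathcal{X}$, the projection $\pi\colon\mathcal{S}\to\mathcal{X}$ is a $\mathscr{C}^1$ diffeomorphism, $f|_{\mathcal{X}}$ is $\mathscr{C}^1$, and the tangent space is the graph of its differential, $T_u\mathcal{S}=\{(v,\langle\nabla_R f(x),v\rangle)\colon v\in T_x\mathcal{X}\}$.

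First I would treat a Fr\'echet subgradient $x^\star\in\widehat\partial f(x)$. Fix a unit vector $v\in T_x\mathcal{X}$ and a $\mathscr{C}^1$ curve $\gamma$ in $\mathcal{X}$ with $\gamma(0)=x$, $\gamma'(0)=v$, so that $f(\gamma(t))=f(x)+t\langle\nabla_R f(x),v\rangle+o(t)$. Since restricting the approach $y\to x$ to $\mathcal{X}\setminus\{x\}$ can only raise the $\liminf$ defining $\widehat\partial f(x)$, the limit of the Fr\'echet quotient along $\gamma$ as $t\to0^+$ equals $\langle\nabla_R f(x)-x^\star,v\rangle$ and must be $\ge0$, while the limit as $t\to0^-$ equals $-\langle\nabla_R f(x)-x^\star,v\rangle$ and must also be $\ge0$. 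Hence $\langle x^\star,v\rangle=\langle\nabla_R f(x),v\rangle$ for every $v\in T_x\mathcal{X}$, i.e. $P_{T_x\mathcal{X}}(x^\star)=\nabla_R f(x)$; equivalently $(x^\star,-1)\perp T_u\mathcal{S}$.

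Next I would pass to a limiting subgradient $x^\star\in\partial' f(x)$, writing $x^\star=\lim x_k^\star$ with $x_k^\star\in\widehat\partial f(x_k)$, $x_k\to x$ and $f(x_k)\to f(x)$, so $u_k=(x_k,f(x_k))\to u$. The Fr\'echet step gives $(x_k^\star,-1)\perp T_{u_k}\mathcal{S}^{(k)}$ for the stratum $\mathcal{S}^{(k)}$ of $u_k$. As there are finitely many strata, I pass to a subsequence along which all $u_k$ lie in a single stratum $\mathcal{S}'$ (with $u\in\overline{\mathcal{S}'}$) and $T_{u_k}\mathcal{S}'\to T$ in the Grassmannian; then the orthogonal complements converge too, so $(x^\star,-1)\in T^\perp$. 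Whitney's condition (a) for the pair $(\mathcal{S}',\mathcal{S})$ forces $T_u\mathcal{S}\subset T$, whence $T^\perp\subset(T_u\mathcal{S})^\perp$ and therefore $(x^\star,-1)\perp T_u\mathcal{S}$; as in the Fr\'echet case this reads $P_{T_x\mathcal{X}}(x^\star)=\nabla_R f(x)$. Finally, since $\partial^\circ f(x)$ is the closed convex hull of $\partial' f(x)$ and $P_{T_x\mathcal{X}}$ is linear and continuous, the formula propagates to all Clarke subgradients, and the norm inequality follows.

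The main obstacle is exactly this passage from Fr\'echet to limiting subgradients. The naive bound $\Vert\nabla_R f(x)\Vert\le\Vert x^\star\Vert$ does not survive convex combinations, since a convex hull can shrink norms; one is therefore forced to carry the exact identity $P_{T_x\mathcal{X}}(x^\star)=\nabla_R f(x)$ through the limit, and this is precisely where Whitney's property (a)---guaranteeing that limits of tangent spaces along a stratum engulf the tangent space of the limiting stratum---is indispensable. A secondary point demanding care is the continuity of the map $V\mapsto P_{V^\perp}$ on the Grassmannian, which is what lets me conclude $(x^\star,-1)\in T^\perp$ from the convergence $T_{u_k}\mathcal{S}'\to T$ together with $(x_k^\star,-1)\to(x^\star,-1)$.
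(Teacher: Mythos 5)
Your proposal is correct in substance, but note first that the paper itself does not prove this lemma: it is quoted verbatim as Corollary 5 of \cite{BDLewS2}, so there is no internal proof to compare against. What you have reconstructed is essentially the argument of that source: there, too, the norm bound is deduced from the stronger projection formula $P_{T_x\mathcal{X}}(x^\star)=\nabla_R f(x)$, proved by identifying Fr\'echet subgradients with nonvertical normals $(x^\star,-1)\perp T_u\mathcal{S}$ to the graph stratum, passing to limits of tangent spaces in the Grassmannian, and invoking Whitney's condition (a). Your two-sided curve argument in the Fr\'echet step, the extraction of a single stratum $\mathcal{S}'$ (local finiteness of the stratification is what one actually needs, rather than global finiteness), and the key observation that the affine identity --- unlike the bare norm inequality --- survives closed convex hulls, are all sound. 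One small repair: when $\mathcal{S}'=\mathcal{S}$, Whitney (a) does not apply (it concerns pairs of distinct strata); there you should instead invoke the $\mathscr{C}^1$-continuity of $u\mapsto T_u\mathcal{S}$ along the stratum, which gives $T=T_u\mathcal{S}$ directly, and the conclusion is the same.

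The one genuine omission concerns the generality of the statement. As the remark following the lemma makes explicit, for a merely lower semi-continuous $f$ the Clarke subdifferential $\partial^\circ f(x)$ is the closed convex hull of $\partial' f(x)+\partial^\infty f(x)$, where $\partial^\infty f$ is the singular limiting subdifferential; your proof covers only $\overline{\mathrm{co}}\,\partial' f(x)$. The gap closes with the machinery you already have: a singular subgradient is a limit $x^\star=\lim_k t_k x_k^\star$ with $t_k\downarrow 0$, $x_k^\star\in\widehat\partial f(x_k)$, $x_k\to_f x$; from $(x_k^\star,-1)\perp T_{u_k}\mathcal{S}'$ you get $(t_kx_k^\star,-t_k)\perp T_{u_k}\mathcal{S}'$, hence in the limit $(x^\star,0)\perp T\supset T_u\mathcal{S}$ by Whitney (a), i.e. $P_{T_x\mathcal{X}}(x^\star)=0$. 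Adding this to the limiting part gives $P_{T_x\mathcal{X}}(y^\star+z^\star)=\nabla_R f(x)$ for $y^\star\in\partial' f(x)$, $z^\star\in\partial^\infty f(x)$, and your convexification step goes through unchanged. For the application actually made in the paper --- $f$ subanalytic and Lipschitz, where the singular subdifferential reduces to $\{0\}$ as the subsequent remark notes --- your proof as written already suffices.
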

\begin{rem}
We should note that under the assumptions of this Lemma the authors have to use again a slightly different definition of the Clarke subdifferential $\partial^\circ f$ as it might not be defined everywhere. If $f(x)=+\infty$, we assume it to be $\varnothing$, while for $x\in\mathrm{dom}f$, it is the closed convex hull of $\partial' f(x)+\partial^\infty f(x)$ where $\partial^\infty f$ denotes the {\it singular limiting subdifferential}. Fortunately, in our case: for a subanalytic Lipschitz function around the origin, this singular limiting subdifferential reduces to zero and we are left with the definition already mentioned in the last remark (cf. Section 4 in \cite{BDLewS1}).
\end{rem}

Now we are ready to prove our Theorem. 
\begin{proof}[Proof of Theorem \ref{subgradient}] 
The problem being local we may assume that $f$ is globally subanalytic and defined in an open ball $D$ centred at zero. Therefore, by Theorem \ref{definiowalne}, we are dealing with a subanalytic subgradient that in addition (cf. \cite{Cl}) is locally bounded, outer semi-continuous and has compact values. By Lemma \ref{cglsc}, it is continuous at zero. Of course, we assume $f\not\equiv 0$.

In order to prove (1) it suffices to show that $0$ does not belong to the closure of the subanalytic set 
$$
F:=\{x\in D\mid 0\in\partial f(x), f(x)\neq 0\}.
$$
Note that $0\notin F$. Suppose that $0\in\overline{F}$. Then by the Curve Selection Lemma there is an analytic curve $\gamma\colon ({\Rz},0)\to ({\Rz}^n,0)$ such that $\gamma((0,\varepsilon))\subset F$. Then $f(\gamma(t))\neq 0$ but $0\in\partial f(\gamma(t))$ for $t\in (0,\varepsilon)$.

Now, we see that $\gamma((0,\varepsilon))$ is contained in one connected component of the set $$S:=\{x\in D \mid 0\in\partial f(x)\}.$$ But by Theorem \ref{connected} this means that $f\circ \gamma$ is constant on $(0,\varepsilon)$. As there is $\lim_{t\to 0+} f(\gamma(t))=f(0)=0$, we conclude that $f\circ\gamma\equiv 0$ which is impossible, for $(f\circ\gamma)\neq 0$ on $(0,\varepsilon)$.

Now, we turn to proving (2). Note that $\partial f(x)$ is a compact set, therefore we may equivalently write $$h(x):= \min \left\lbrace ||l|| \colon \ l \in \partial f(x) \right\rbrace$$ and this is a subanalytic function due to the subanalycity both of the subgradient and the norm.

For $n = 1$, subanalycity implies that the function $f$ is $\mathscr{C}^1$  in a pointed interval $(-\varepsilon,\varepsilon)\setminus\{0\}$ and the assumption on the subgradient together with Lemma \ref{cglsc} implies that it is in fact $\mathscr{C}^1$ in the whole of $(-\varepsilon,\varepsilon)$. Therefore, in this case (2) reduces to the classical \L ojasiewicz gradient inequality and there is nothing to do.

Suppose $n > 1$. Fix a compact ball $B$ centered at zero, contained in the chosen domain $D$ of $f$. Notice that $h$ is lower semi-continuous --- it suffices to show that $\left\lbrace x\in D \mid \ h(x) > \alpha \right\rbrace$ is open for all $\alpha \in \mathbb{R}$.
Indeed, assume by contradiction that this set is not open. 
Then there exists $x_0$ such that $h(x_0) > \alpha, \ x_0 = \lim_{\nu \to \infty} x_\nu$ for some sequence such that $h(x_\nu) \le \alpha$. 
Also, for $\nu=0,1,\dots$, we have $h(x_\nu) = ||l_\nu||$ with some $l_\nu \in \partial f(x_\nu)$. 
By the outer semi-continuity of $\partial f$, $\limsup_{x \to x_0} \partial f (x) \subset \partial f (x_0)$ and these sets are compact.
 Thus, we can find a convergent subsequence $l_{\nu_k} \to l_0 \in \partial f (x_0)$ which is a contradiction, for $||l_{\nu_k}||\leq \alpha$, while $||l||>\alpha$.

For each $t\geq 0$ the set $\{x\in B\mid |f(x)|=t\}=|f|^{-1}(t) \cap B$ is compact. By the Darboux property, as we have assumed $f\not\equiv 0$, such a set is nonempty, provided $t\geq 0$ is small enough. Since a lower semi-continuous function attains its minimum on a compact set, $h$ attains its minimum on $\{x\in B\mid |f(x)|=t\}$. That is, we can define $$ \varphi: [0, \varepsilon) \ni t \mapsto \min \left\lbrace h(x) \mid \ x \in B, |f(x)| = t  \right\rbrace = \min E_t \in [0, \infty)$$ where $E_t = \left\lbrace u \in \mathbb{R} \colon (t,u) \in E \right\rbrace$ for the subanalytic set $$ E := (|f|, h) (B) \subset [0, +\infty) \times [0, \infty).$$ We see that $\varphi$ is a subanalytic function and $\varphi(0)=0$.
 
Notice that $\varphi(t) = 0$ iff there exists $x \in B \cap |f|^{-1}(t)$ such that $0 \in \partial f (x)$, but then, by (1), $f(x) = 0$, and so $t = 0$. That is, $\varphi(t)=0$ iff $t=0$.

As $\varphi$ is subanalytic, we may assume that $\varphi|_{(0, \varepsilon)}$ is $\mathscr{C}^1$ and monotonic. We now show that $\varphi$ is continuous at $0$.

To see this, let us fix $\eta>0$. 
By Lemma \ref{cglsc} and the assumptions, we know that $\partial f(x) \xrightarrow{K} \left\lbrace 0  \right\rbrace = \partial f (0)$, so we will find a closed ball $B_0$ centered at $0$ such that $\forall x \in B_0, \partial f(x) \subset B(0, \eta)$, hence $h(x) < \eta.$

By the continuity of $f$, we may choose a closed ball $B_1$ centered at $0$ such that $B_1 \subset B_0$ and there is an $\eta_0>0$ such that $|f|(B_1) = [0, \eta_0] \subset [0, \eta)$ (\footnote{Since $f\not\equiv 0$, $f(B_1)$ is a compact interval of the form $[0,\eta_0]$ with $\eta_1$ as small as we want, for an appropriate choice of $B_1$, because of $f(0)=0$.}).

Thus for any $t \in [0, \eta_0]$, we will find $x \in B_1$ such that $|f(x)| = t$ and 
$x\in B_1 \subset B_0$ implies $h(x) < \eta$. Consequently, $\varphi(t) < \eta$.

Thus we have proved that $\varphi \ge 0$ is continuous on $[0, \varepsilon)$ and
strictly increasing, as $\varphi$ vanishes only at zero and we know it is monotonic.

As a result, $\varphi$ has the following form: $$\varphi (t) = c t^{{\theta}} + o(t^{\theta}), \ t \in [0, \varepsilon), \textrm{for some}\ c>0, \ \theta >0.$$

It remains to be proven that $\theta \in (0, 1)$. For if it were so, then we would have $\Gamma _\varphi \cap V \subset \{(t,u)\mid u\geq t^{\tilde{\theta}} \}$ for some neighbourhood $V\ni 0$ and some $\tilde{\theta} \in (0,1)$ that we may take of the form  $\theta+\delta $ with $\delta \in (0,1-\theta)$ (\footnote{For then $\varphi (t)\geq t^{\tilde{\theta}}$ is equivalent to $1\leq \varphi (t)/t^{\tilde{\theta}}=(c+o(t^{\theta})/t^{\theta })/t^\delta $. The latter tends to $+\infty$ when $t\rightarrow 0^+$.}). This implies $(2)$.

%Let $S_0 \subset S=\lbrace x\ \in D \mid \ 0 \in \partial f (x) \rbrace$ be the connected component containing zero. Note that by (1),  $S \subset f^{-1}(0)$ and, as we have $f\not\equiv 0$, consequently, $B \setminus S \neq \emptyset$.

Define $A: = \lbrace x \in B \setminus S \mid \ h(x) = \varphi(|f(x)|) \rbrace$. Clearly, this set is subanalytic. By (1),  $S \subset f^{-1}(0)$ and, as we have $f\not\equiv 0$, i.e. $0\notin\mathrm{int} f^{-1}(0)$, consequently, $0\in\overline{B \setminus f^{-1}(0)}$. Observe that $0\notin A$, since $0\in S$.

Note that $$x \in B \setminus S \iff 0 \not \in \partial f(x) \iff h(x) > 0.\leqno{(\circ)}$$

Recall that $\varphi(t) = \min \lbrace h(x) \mid \ x \in B \cap |f|^{-1}(t) \rbrace$. 
Namely, for any $t\in (0,\varepsilon)$, there exists $x_t \in B \cap |f|^{-1}(t)$ such that $\varphi(\vert f(x_t) \vert) = \varphi(t) = h(x_t)$ and $h(x_t)>0$, for $\varphi$ vanishes only at zero.
Therefore, $A \neq \varnothing$.
%Notice that $0 \not \in A$, because $0 \in S$. Also, $S \subset f^{-1}(0)$, but $f \not \equiv 0$, so $0 \not \in \text{int} S$.

Take a sequence $$B \setminus f^{-1}(0) \ni x_\nu \to 0, $$ then $$0 < t_\nu: = |f(x_\nu)| \to 0.$$ %as well as $$\partial f(x_n) \xrightarrow{K} 0 \ \ (x_n  \not \in S)$$ in the sense of Kuratowski.

We have that $$\forall \nu \ \exists \tilde{x}_\nu \in |f|^{-1}(t_\nu) \cap B \colon 0 < h(\tilde{x}_\nu) = \varphi(t_\nu) \to 0, \ (\nu \to \infty),$$thanks to the continuity of $\varphi$. 
By $(\circ)$, such $\tilde{x}_\nu \not \in S$.%, because otherwise $f(\tilde{x}_n) = 0$ and $\varphi(|f(\tilde{x}_n)|) = \varphi(0) = 0$. 
Keeping in mind that $|f(\tilde{x}_\nu)| = t_\nu$, together with $h(\tilde{x}_\nu) = \varphi(t_\nu)$, we see that $\tilde{x}_\nu \in A$.

By assumption, $B$ is compact, so passing to a subsequence, if necessary, we find a limit $A \ni \tilde{x}_\nu \to x_0 \in B$. We have $|f(\tilde{x}_\nu)|=t_\nu\to 0$, so that $f(x_0)=0$. Actually, $x_0 \in S$. % \subset f^{-1}(0)$, meaning $f(x_0)=0$, as $h(\tilde{x}_n) \to 0 \ (n \to \infty).$
Indeed, we know that $h$ takes nonnegative values. If there were $h(x_0) >0$, then $h(x_0)>C>0$ and by the lower semi-continuity of $h$, we would have $h(x) > C$ in a neighborhood of $x_0$ which contradicts $h(\tilde{x}_\nu)\to 0$, $\tilde{x}_\nu\to x_0$. 
%However, $\tilde{x}_n \to x_0, \ (n \to \infty)$, therefore 

We have found a point $x_0 \in \overline{A} \setminus A$ and so the Curve Selection Lemma gives us an analytic arc $\gamma\colon \left( \mathbb{R}, 0 \right) \to \left( \mathbb{R}^n, x_0 \right)$ such that $\gamma ((0, r)) \subset A$. Notice that $f(\gamma(t))\to f(x_0)=0$ when $t\to 0^+$.
Then, \begin{gather}
h \left(\gamma(t)\right) = \varphi \left( \vert f \left( \gamma(t) \right) \vert \right) = c \vert f \left( \gamma(t) \right) \vert ^{\theta} + o \left( \vert f \left( \gamma (t) \right)\vert ^{\theta} \right),\quad 0<t\ll 1. \tag{*}
\end{gather}

In order to get $\theta < 1$, 
 it suffices to prove that \begin{gather}
\lim_{t \to 0^+} \frac{\vert f \left( \gamma(t) \right) \vert}{h(\gamma(t))} = 0. \tag{**}
\end{gather}
Indeed, we have that $h \left( \gamma (t) \right) \to 0, \ \vert f \left( \gamma(t) \right) \vert \to 0$ as $t \to 0^+$. 
Dividing both sides of $(\hbox{}^*)$ by $h \left(\gamma(t) \right)^{\theta}$ gives:
$$h \left(\gamma(t) \right)^{1- \theta} = c \cdot \left(\frac{\vert f \left( \gamma(t) \right) \vert }{h(\gamma(t))} \right)^{\theta} + \frac{o \left( \vert f \left( \gamma (t) \right)\vert ^{\theta} \right)}{ \vert f \left( \gamma (t) \right)\vert ^{\theta}} \cdot \frac{ \vert f \left( \gamma (t) \right)\vert ^{\theta}}{h \left( \gamma (t) \right)^{\theta}}.$$
Passing to the limit when $t\to 0^+$ shows that there must be $ 1 - \theta > 0$.

Let us now prove $(\hbox{}^{**})$. Assume by contradiction that $$\exists \tilde{c}>0, \ \exists  t_\nu \to 0^+ \colon \ \frac{\vert f \left( \gamma(t_\nu) \right) \vert}{h(\gamma(t_\nu))} \ge \tilde{c}.$$
Then for the subanalytic set $$E:= \lbrace t \in (0,r) \mid \ \vert f \left( \gamma(t) \right) \vert \ge \tilde{c}\cdot  h \left( \gamma(t) \right) \rbrace \neq \varnothing,$$ we have
$0 \in \overline{E} \setminus E$. The subanalycity of $E$ implies that there exists $r_1>0$ such that $ (0, r_1) \subset E$ and so $$ \vert f \left( \gamma(t) \right) \vert \ge \tilde{c} \cdot h \left( \gamma(t) \right) > 0, \ t \in (0, r_1). $$

Define $\psi(t): = (f \circ \gamma)(t), \ t \in [0, r_1)$. Thus defined $\psi$ is continuous subanalytic on $[0, r_1)$ and $$\psi(t) = 0 \iff t = 0.$$
Therefore $\psi$ has the following representation near zero: $$\psi(t) = a t^{\alpha} + o(t^{\alpha}), \ a \neq 0, \ \alpha > 0.$$

Now, by Lemma \ref{gradAsympt}, $\psi$ satisfies the asymptotic \L ojasiewicz gradient inequality, regardless of $\psi ^{\prime} (0) = 0$:
$$\vert \psi^{\prime} (t) \vert \ge \delta\vert \psi (t) \vert ^{\beta}, \ 0< t\ll 1,$$
for a some constants $\delta>0$ and $\beta \in (0,1)$.

Take now a nonvertical stratification of the graph of $f$ over $B$ and let $\left\lbrace \mathcal{X}_i \right\rbrace_{i \in I}$ be the Whitney stratification of $B$ obtained from it by projection. %$\text{dom} f = \mathbb{R}^n$. 
From the analycity of $\gamma$ it follows that there is exactly one $i$ such that for some $0<r_2\leq r_1$, we have $\gamma((0,r_2))\subset\mathcal{X}_i$. Then $x_0\in\overline{\mathcal{X}_i}$, $\psi=f\circ\gamma$ is of class $\mathscr{C}^1$ on $(0,r_2)$ and $\gamma'(t)$ lies in the tangent space $T_{\gamma(t)}\mathcal{X}_i$, for any $0<t<r_2$. From this we infer that $\psi'(t)=\langle \nabla_R f(\gamma(t)),\gamma'(t)\rangle$ (actually, the same kind of observation is made in \cite{Drus} Lemma 2.10 as a consequence of the methods used in \cite{BDLewS2}). But the derivative $\gamma'(t)$ is bounded by some $M>0$ and so 
$$
\delta |\psi(t)|^\beta\leq M||\nabla_R f(\gamma(t))||,\quad 0<t\ll 1.
$$
Using Lemma \ref{8.10}, we obtain eventually
$$
\delta|f(\gamma(t))|^\beta\leq M h(\gamma(t)),\quad 0<t\ll 1.
$$
However, by the choice of $\gamma$ we have also that for all $t$ small enough,
$$
0<\tilde{c}\cdot h(\gamma(t))\leq |f(\gamma(t))|.
$$
Summing up, we have found an exponent $\beta\in (0,1)$ such that for all $t$ sufficiently small,
$$
0<\frac{\delta\tilde{c}^\beta}{M}\leq h(\gamma(t))^{1-\beta},
$$
where the right-hand side converges to zero, as $t\to 0^+$. This contradiction ends the proof of the theorem.

\end{proof}

\section{Acknowledgements}

During the preparation of this paper, the first-named author was supported by Polish Ministry of Science and Higher Education grant  IP2011 009571.

He also would like to thank the University Lille 1, where the first part of this work was done, for hospitality and professor T.S. Ph\d{a}m for an interesting discussion on \cite{Ph} and the references to \cite{Drus} and \cite{BDLewS2}.

%Both authors are grateful to the anonymous referee for pointing out several corrections that improved the exposition of the paper.

\end{document}